\newenvironment{acknowledgements}[1]
   {\noindent {\bf Acknowledgements}\quad #1}
\newtheorem{theorem}{Theorem}[section] 
\newtheorem{claim}[theorem]{Claim}
\newtheorem{cc}[theorem]{Crucial Claim}
\newtheorem{conclusion}[theorem]{Conclusion}
\newtheorem{observation}[theorem]{Observation}
\theoremstyle{definition}
\newtheorem{definition}[theorem]{Definition}
\newtheorem{convention}[theorem]{Convention}
\newtheorem{hypothesis}[theorem]{Hypothesis}
\theoremstyle{remark}
\newtheorem{remark}[theorem]{Remark}
\newtheorem{question}[theorem]{Question}
\newtheorem{qwa}[theorem]{Question}
\newtheorem{notation}[theorem]{Notation}
\newcommand{\am}{{\rm am}}
\newcommand{\nb}{{\rm nb}}
\newcommand{\awc}{{\rm awc}}
\newcommand{\inc}{{\rm inac}}
\newcommand{\inac}{{\rm inac}}
\newcommand{\incr}{{\rm incr}}
\newcommand{\non}{{\rm non}}
\newcommand{\tr}{{\rm tr}}
\newcommand{\Sym}{{\rm Sym}}
\newcommand{\add}{{\rm add}}
\newcommand{\Add}{{\rm Add}}
\newcommand{\Levy}{{\rm Levy}}
\newcommand{\real}{{\rm real}}
\newcommand{\suc}{{\rm suc}}
\newcommand{\otp}{{\rm otp}}
\newcommand{\cov}{{\rm cov}}
\newcommand{\Cohen}{{\rm Cohen}}
\newcommand{\Borel}{{\rm Borel}}
\newcommand{\bd}{{\rm bd}}
\newcommand{\wc}{{\rm wc}}
\newcommand{\meagre}{{\rm meagre}}
\newcommand{\cf}{{\rm cf}}
\newcommand{\id}{{\rm id}}
\newcommand{\lh}{{\ell g}}
\newcommand{\rest}{{\restriction}}
\newcommand{\dom}{{\rm dom}}
\newcommand{\set}{{\rm set}}
\newcommand{\wilog}{{\rm without loss of generality}}
\newcommand{\Wilog}{{\rm Without loss of generality}}
\newcommand{\then}{{\underline{then}}}
\newcommand{\when}{{\underline{when}}}
\newcommand{\Then}{{\underline{Then}}}
\newcommand{\Iff}{{\underline{iff}}}
\newcommand{\mn}{{\medskip\noindent}}
\newcommand{\cA}{{\mathscr A}}
\newcommand{\bB}{{\bold B}}
\newcommand{\cJ}{{\mathscr J}}
\newcommand{\bbI}{{\mathbb I}}
\newcommand{\bbZ}{{\mathbb Z}}
\newcommand{\gb}{{\mathfrak b}}
\newcommand{\cP}{{\mathscr P}}
\newcommand{\bbR}{{\mathbb R}}
\newcommand{\bbN}{{\mathbb N}}
\newcommand{\gd}{{\mathfrak d\/}}
\newcommand{\gx}{{\mathfrak x\/}} 
\newcommand{\cH}{{\mathscr H}}
\newcommand{\cI}{{\mathscr I}}
\newcommand{\bbL}{{\mathbb L}}
\newcommand{\bbP}{{\mathbb P}}
\newcommand{\gp}{{\mathfrak p}}
\newcommand{\bbQ}{{\mathbb Q}}
\newcommand{\mbR}{{\mathbb R}}
\newcommand{\cT}{{\mathscr T}}
\newcommand{\gt}{{\mathfrak t}} 
\newcommand{\cU}{{\mathscr U}}
\newcommand{\cX}{{\mathscr X}}
\newcommand{\cY}{{\mathscr Y}}
\newcommand{\pr}{{\rm pr}}
\newcommand{\vare}{\varepsilon}
\newcommand{\nst}{{\bf nst}}
\def\mathunderaccent#1#2 {\let\theaccent#1\skewfactor#2
\mathpalette\putaccentunder}
\def\putaccentunder#1#2{\oalign{$#1#2$\crcr\hidewidth
\vbox to.2ex{\hbox{$#1\skew\skewfactor\theaccent{}$}\vss}\hidewidth}}
\def\name{\mathunderaccent\tilde-3 }
\newenvironment{PROOF}[2][\proofname.]
   {\begin{proof}[#1]}
   {\end{proof}}
\begin{document}

\title[Null ideal for inaccessible $\lambda$]{A parallel to the null 
ideal for inaccessible $\lambda$. Part I}
\author {Saharon Shelah}
\address{Einstein Institute of Mathematics\\
Edmond J. Safra Campus, Givat Ram\\
The Hebrew University of Jerusalem\\
Jerusalem, 91904, Israel\\
 and \\
 Department of Mathematics\\
 Hill Center - Busch Campus \\ 
 Rutgers, The State University of New Jersey \\
 110 Frelinghuysen Road \\
 Piscataway, NJ 08854-8019 USA}
\email{shelah@math.huji.ac.il}
\urladdr{http://shelah.logic.at}
\thanks{Research supported by the United-States-Israel Binational
 Science Foundation (Grant No. 2006108). Publication 1004.}

\subjclass[2010]{Primary 03E35; Secondary: 03E55}

\keywords {set theory, forcing, random real, inaccessible, the null ideal}

\date{December 30, 2016}

\begin{abstract}
  It is well known how to generalize the meagre ideal replacing $\aleph_0$
  by a (regular) cardinal $\lambda > \aleph_0$ and requiring the ideal to be
  $(<\lambda)$-complete.  But can we generalize the null ideal?  In terms of
  forcing, this means finding a forcing notion similar to the random real
  forcing, replacing $\aleph_0$ by $\lambda$.  So naturally, to call it a
  generalization we require it to be $(< \lambda)$-complete and
  $\lambda^+$-c.c. and more.  Of course, we would welcome additional
  properties generalizing the ones of the random real forcing.  Returning to
  the ideal (instead of forcing) we may look at the Boolean Algebra of
  $\lambda$-Borel sets modulo the ideal.  Common wisdom have said that there
  is no such thing because we have no parallel of Lebesgue integral, but
  here surprisingly \underline{first} we get a positive = existence answer
  for a generalization of the null ideal for a ``mild'' large cardinal
  $\lambda$ - a weakly compact one.  Second, we try to show that this
  together with the meagre ideal (for $\lambda$) behaves as in the countable
  case.  In particular, we consider the classical Cicho\'n diagram, which
  compares several cardinal characterizations of those ideals.  We shall
  deal with other cardinals, and with more properties of related forcing
  notions in subsequent papers \cite{Sh:F1199, Sh:F1538, Sh:F1580, Sh:1100, Sh:E82}
  and Cohen and Shelah \cite{CnSh:1085} and a joint work with
  Baumhauer and Goldstern.
\end{abstract}

\maketitle
\numberwithin{equation}{section}
\setcounter{section}{-1}
\newpage

\centerline {Annotated Content}
\bigskip

\noindent
\S0 \quad Introduction, pg. \pageref{Introduction}

\S(0A) \quad Aim: for general audience, pg. \pageref{Aim}

\smallskip

\S(0B) \quad For set theorist, pg. \pageref{Forset}
\begin{enumerate}
\item[${{}}$]  [We describe the content of the paper.]
\end{enumerate}
\smallskip

\S(0C) \quad Preliminaries, pg.\pageref{Prelim}
\begin{enumerate}
\item[${{}}$]  [We quote some results and definitions.]
\end{enumerate}
\smallskip

\noindent
\S1 \quad Like random real forcing for weakly compact $\kappa$,
pg. \pageref{Like} 
\smallskip

\S(1A) \quad Adding $\eta \in {}^\kappa 2$, pg.7
\begin{enumerate}
\item[${{}}$]  [For a weakly compact cardinal $\kappa$ we 
construct a forcing notion $\bbQ_\kappa$ which is
$\kappa^+$--c.c. $\kappa$-strategically complete (even can be 
represented as $\kappa$-complete) and is $\kappa$--bounding and adds a new  
$\name\eta \in {}^\kappa \kappa$, its generic.  Moreover, it is very
explicitly defined (and is $\kappa$--Borel).  ]
\end{enumerate}

\S(1B) \quad Adding a dominating $\eta \in \prod\limits_{\varepsilon <
\kappa} \theta_\varepsilon$,  pg. \pageref{Adding}
\begin{enumerate}
\item[${{}}$]  [We deal with a generalization.]
\end{enumerate}
\smallskip

\noindent
\S2 \quad What are the desired properties of the ideal, pg. \pageref{The} 
\smallskip

\S(2A) \quad Desirable properties: first list, pg. \pageref{Desirable} 

\S(2B) \quad Desirable properties: second list, pg. \pageref{TwoDes} 
\smallskip

\noindent
\S3 \quad On $\bbQ_\kappa$, $\kappa$--Borel sets and $\id(\bbQ_\kappa)$, 
pg. \pageref{idBorel}
\begin{enumerate}
\item[${{}}$]  [We introduce and investigate the ideal $\id(\bbQ_\kappa)$ of
  subsets of ${}^\kappa2$ determined by our forcing notion $\bbQ_\kappa$. We
  study properties of $\kappa$--Borel subsets of ${}^\kappa 2$ related to
  this ideal.]
\end{enumerate}
\smallskip

\noindent
\S4 \quad On $\add(\bbQ_\kappa)$ and $\cf(\bbQ_\kappa)$,
pg. \pageref{addQ} 
\begin{enumerate}
\item[${{}}$]  [We give a representation of the additivity number
  $\add(\bbQ_\kappa)$ of our ideal.]
\end{enumerate}
\smallskip

\noindent
\S5 \quad The parallel of the Cicho\'n Diagram, pg. \pageref{CichKappa}
\begin{enumerate}
\item[${{}}$]  [We prove several inequalities between cardinal coefficients
  of the ideals $\id(\bbQ_\kappa)$ and present a parallel of Cicho\'n Diagram.]
\end{enumerate}
\smallskip

\noindent
\S6 \quad $\bbQ_\kappa$ vs $\Cohen_\kappa$, pg. \pageref{VS}
\smallskip

\S(6A) \quad Effect on the ground model, pg. \pageref{effect} 
\begin{enumerate}
\item[${{}}$]  [We show that $\bbQ_\kappa$ makes the ground model ${}^\kappa
  2$ meagre and $\Cohen_\kappa$ makes the ground model ${}^\kappa 2$ to
  belong to $\id(\bbQ_\kappa)$.]
\end{enumerate}

\S(6B) \quad When does $\bbQ_\kappa$ add a Cohen real?, pg. \pageref{addCoh}  
\begin{enumerate}
\item[${{}}$]  [We investigate the class $S_{\awc}$ of all inaccessible
  $\kappa$ for which $\bbQ_\kappa$ adds a Cohen real.]
\end{enumerate}
\smallskip

\noindent
\S(7) \quad What about the parallel to ``amoeba forcing''?,
pg. \pageref{amoeba} 
\begin{enumerate}
\item[${{}}$]  [We introduce the forcing notion $\bbQ^\am_\kappa$ adding a
  generic condition $\name{p}_\kappa\in \bbQ_\kappa$]
\end{enumerate}

\noindent
\S(8) \quad Generics and absoluteness, pg. \pageref{absolut}
\begin{enumerate}
\item[${{}}$]  [We investigate $\kappa$--Borel and
  $\kappa$--stationary--Borel sets and show that some relations associated
  with $\bbQ_\kappa$ are absolute.]
\end{enumerate}

\newpage

\section {Introduction} \label{Introduction}

\subsection {Aims: for general audience} \label{Aim}\
\bigskip

The ideals of null sets and of meagre sets on the reals are certainly
central in mathematics.  From the forcing point of view we speak of random
real forcing and Cohen forcing.  The Cohen forcing has natural
generalizations (and relatives) when we replace $\cP(\bbN)$ by
$\cP(\lambda)$, or the set of the characteristic functions of subsets of
$\lambda$, for a regular uncountable cardinal $\lambda$, replacing finite by
``of cardinality ${<}\lambda$".  But we lack a generalization of random real
forcing to higher cardinals $\lambda$, replacing reals by $\lambda$-reals,
e.g. members of ${}^\lambda 2$.  It has seemed that this lack is due to
nature; the reason being that on the one hand the Baire category theorem
generalizes naturally (when we are allowed to approximate in $\lambda$-steps
and information of size $< \lambda$ instead finite; all this for regular
$\lambda$), but on the other hand we know nothing remotely like Lebesgue
measure.

Surprisingly, at least for me, there is a generalization: not of the
Lebesgue measure, \underline{but} of the ideal of null sets, i.e., the ones
of Lebesgue measure zero.  This is done here (i.e., in this part) for a mild
large cardinal $\lambda$: weakly compact.  The solution for more cardinals
will be dealt with in a continuation (at some price).  The present
definition should be examined in two ways.  First, we may list the well
known properties of the null ideal (and of random real forcing) and try to
prove (or disprove) them for our ideal.  Second, random real forcing was
used quite extensively in independence results; in particular for related
cardinal invariants, so it is natural to try to generalize such applications.

The first issue is dealt with in \S2 (assuming Definition \ref{n5} and
intended for wider audience) and then \S3--\S8 here.  The second is treated in
the continuation.  Whereas success in the second issue should be easy to judge,
concerning the first issue the reader may first list what are reasonable
hopes and compare them with the discussion and description in \S3. This is
not done in the present section in order to help the reader to make a list
of expectations independent of what we have done. 

A set theoretically uninitiated reader may read the rest of \S(0A) to see
what are those large cardinals, look casually at Definition \ref{n5}, just
enough to see that the definition of $\bbQ_\kappa$, the parallel of the
family of all closed subsets of $[0,1]_{\bbR}$ or ${}^\omega 2$ which are
not Lebesgue null for $\kappa$ strongly inaccessible, is natural
and simple, then jump to \S2 to see what we hope for and what is done.

Let us describe for the non-set-theoretic reader, what are these ``large
cardinals".  Note that $\aleph_1$ is parallel in some respect to $\aleph_0$,
whereas $\aleph_0$ is ``the first infinite cardinal"; the number of natural
numbers; $\aleph_1$ is the first uncountable cardinal, and is the number of
countable ordinals (that is, isomorphism types of countable linear well
orderings).  Also both are so called regular: the union of less than
$\aleph_\ell$ sets each of cardinality $<\aleph_\ell$ is $< \aleph_\ell$.
But $\aleph_0$ is strong limit: $\kappa < \aleph_0 \Rightarrow 2^\kappa <
\aleph_0$ whereas $\aleph_1$ is not.  We can prove that there are strong
limit cardinals: let $\beth_0 = \aleph_0,\beth_{n+1} =
2^{\beth_n},\beth_\omega = \sum\limits_{n<\omega} \beth_n$, now
$\beth_\omega$ is a strong limit cardinal but alas is not regular.  We say a
cardinal $\lambda$ is (strongly) inaccessible when $\lambda$ is regular and
strong limit, it is called ``large cardinal" because we cannot prove its
existence in ZFC but, modulo this, it is considered a very reasonable, small
one.  Similarly, the weakly compact ones which we now introduce: an
uncountable cardinal is weakly compact when it is strongly inaccessible and
satisfies the analog of the infinite Ramsey theorem: every graph with
$\lambda$ nodes has a subgraph with $\lambda$ nodes which is complete or
empty (alternatively, it satisfies the generalization of K\"onig lemma).  So
weakly compact cardinals are more similar to $\aleph_0$ than other
cardinals, so it is not unnatural assumption when trying to generalize the
null ideal.  \bigskip

\subsection {For Set Theorists} \label{Forset} \
\bigskip

In the present paper we prove that for a weakly compact cardinal $\lambda$
there are (naturally defined) forcing notions adding a new
$\eta \in {}^\lambda 2$ which have not few parallels (replacing ``finite'' by
``of cardinality $< \lambda$'') of the properties associated with random real
forcing (and we define the relevant ideal).  It seems natural to hope this
will enable us to understand better related problems, in particular cardinal
invariants of $\lambda$; on cardinal invariants for $\lambda = \aleph_0$,
i.e. the continuum see Blass \cite{Bls10}; in higher cases see Cummings and
Shelah \cite{CuSh:541}; in particular on strongly inaccessible see
Ros{\l}anowski and Shelah \cite{RoSh:777,RoSh:888, RoSh:889, RoSh:942} and
also \cite{Sh:945}.

In \S1 we show for $\lambda$ weakly compact that there is a (non-trivial)
$\lambda$--bounding $\lambda^+$-c.c. ($< \lambda$)-strategically complete
forcing notion and even a $\lambda$-complete one, see \ref{z3d}. We also
generalize the construction for adding a member of
$\prod\limits_{\vare<\lambda}\theta_\vare$.

In the second section we discuss desirable properties of the ideal.
In Sections 3--8 we try to deal systematically with parallels of properties of
the null ideal. 

The ideal $\id(\bbQ_\kappa)$ (of subsets of ${}^\kappa2$) determined by our
forcing notion $\bbQ_\kappa$ is introduced in \S3. There we also study the
properties of $\kappa$--Borel subsets of ${}^\kappa 2$ related to this
ideal.

Cardinal characteristics of the ideal $\id(\bbQ_\kappa)$ and their relations
to $\gb_\kappa,\gd_\kappa$ and the characteristics of the $\kappa$--meagre
ideal are investigated in Sections 4 and 5. We present a parallel of
Cicho\'n Diagram in Theorem \ref{u14}. 

In \S6 we compare $\bbQ_\kappa$ and $\Cohen_\kappa$. We note that forcing
with one makes the set of ground model $\kappa$--reals small in the dual
sense. We also investigate the class $S_{\awc}$ of all inaccessible
cardinals $\kappa$ for which $\bbQ_\kappa$ adds a Cohen real.

In the next section we introduce a parallel to ``amoeba forcing'' --- a
forcing notion $\bbQ^\am_\kappa$ adding a generic condition
$\name{p}_\kappa\in \bbQ_\kappa$. And then, in \S8, we investigate
$\kappa$--Borel and $\kappa$--stationary--Borel sets and show that some
relations associated with $\bbQ_\kappa$ are absolute.
\medskip

We shall continue in successive papers, things delayed for various reasons.
In particular in Cohen and Shelah \cite{CnSh:1085} we shall eliminate the
assumption ``$\lambda$ is weakly compact'' and in \cite[\S 1]{Sh:E82} we
will investigate non-inaccessible case. A work with Baumhaver and Goldstern
(see \cite{Sh:F1580}) will deal with consistency results complimentary to
the ZFC implications (i.e., inequalities) here.  In \cite[\S 1]{Sh:E82} we
investigate adding many ``$\lambda$--randoms''. Further research concerning
consistency results using iteration of creature forcing will be presented in
\cite{Sh:1100}. We will also consider there constructions starting not with
Cohen but other nice forcing notions and more.

\subsection {Preliminaries} \label{Prelim}
\bigskip

\begin{definition}
\label{y6}
0)  We say $\eta$ is a $\lambda$-real when $\eta \in {}^\lambda 2$.

\noindent
1) We define when $\bold B \subseteq {}^\lambda 2$ is a $\lambda$-Borel set
naturally (see \cite{Sh:630}), that is $X \subseteq {}^\lambda 2$ is a
basic $\lambda$-Borel set if there exists $\nu \in {}^{\lambda>}2$
such that $X = ({}^\lambda 2)^{[\nu]} = \{\eta \in {}^\lambda 2:\nu
\triangleleft \eta\}$.  The family of $\lambda$-Borel sets is the
closure of the basic ones under unions and intersections of at most
$\lambda$ members, hence also by complements.

Note: actually $\bold B$ is an absolute definition of a subset of
${}^\lambda 2$ so $\bold B^{\bold V}$, ``$\bold B$ as interpreted in
the universe $\bold V$'', is well defined for suitable $\bold V$.

\noindent
2) ``$F$ is a $\lambda$--Borel function'' is defined similarly.

\noindent
3) $\bold B \subseteq {}^\lambda 2$ is a $\Sigma^1_1(\lambda)$--set when
$\bold B = \{\langle \eta(2 \alpha):\alpha < \lambda\rangle: \eta \in \bold
B_1\}$ for some $\lambda$--Borel set $\bold B_1$.

\noindent
4) $\bold B \subseteq {}^\lambda 2$ is a $\lambda$--stationary Borel set
\when \, for some $\lambda$--Borel function $F:{}^\lambda 2 \rightarrow
\cP(\lambda)$ we have $\eta \in B \Leftrightarrow F(\eta)$ is stationary.

\noindent 
5) A set $X \subseteq {}^\lambda \cH(\lambda)$ is $\lambda$--nowhere stationary
Borel \Iff \, there is a $\lambda$--Borel function $\bold B$ from
${}^\lambda \cH(\lambda)$ to $\cP(\lambda)$ such that for every
$\eta \in {}^\lambda \cH(\lambda)$ we have: $\eta \in X$ \Iff \, $F(\eta)$ is
a nowhere stationary subset of $\lambda$ (see \ref{z6}(2)).  The complements
of such $X$ are $\lambda$-somewhere stationary sets.

\noindent 
6) Similarly replacing ${}^{\lambda >}2$ by other trees with $\lambda$
levels and $\lambda$ nodes.
\end{definition}

\begin{definition}
\label{y8}
1) We say that a set $B \subseteq {}^\lambda 2$ is $\lambda$--closed \when
\,: 
\begin{enumerate}
\item[$\bullet$]  $\eta \in {}^\lambda 2 \wedge (\forall \alpha <
\lambda)(\exists \nu \in B)(\eta \rest \alpha = \nu \rest \alpha)
\quad \Rightarrow\quad \eta \in B$, 
\end{enumerate}
equivalently
\begin{enumerate}
\item[$\bullet$]  for some sub-tree $T \subseteq {}^{\lambda >} 2$ we have
\[B = {\lim}_\lambda(T) \stackrel{\rm def}{=} \{\eta:\eta \mbox{ a sequence of
  length $\lambda$ such that }\alpha < \lambda \Rightarrow \eta \rest \alpha
\in T\}.\] 
\end{enumerate}

\noindent
2) Let  $\bbQ$ be a family of subtrees of ${}^{\lambda >}2$ (or a quasi
order with such set of elements). We say that $B \subseteq {}^\lambda 2$ is
a $\bbQ$--basic set \when \, $B =\lim_\lambda(p)$ for some $p \in \bbQ$. 

\noindent
3) Similarly replacing ${}^{\lambda >}2$ by other trees, as in \ref{y6}(6).
\end{definition}

\begin{definition}
\label{z3}
1) We say that a forcing notion $\bbP$ is $\alpha$-strategically complete
\underline{when} the player COM has a winning strategy in the following game
$\Game_\alpha(p,\bbP)$ for each $p \in \bbP$.

The game $\Game_\alpha(p,\bbP)$ involves two players, COM and INC. A play
lasts $\alpha$ moves; in the $\beta$-th move, first the player COM 
chooses $p_\beta \in \bbP$ such that $p \le_{\bbP} p_\beta$ and $\gamma <
\beta \Rightarrow q_\gamma \le_{\bbP} p_\beta$ and second the player INC
chooses $q_\beta \in \bbP$ such that $p_\beta \le_{\bbP} q_\beta$.

The player COM wins a play if it has a legal move for every $\beta <
\alpha$.

\noindent
2) We say that a forcing notion $\mathbb P$ is $({<}\lambda)$-strategically
complete \underline{when} it is $\alpha$-strategically complete for every
$\alpha < \lambda$.
\end{definition}

\begin{remark}
\label{z3d}
The difference between ``$\bbP$ is $\lambda$-strategically complete"
and ``$\lambda$-complete" is not real, i.e., when we do not distinguish
between equivalent forcing, those properties are very close (as in
\cite[Ch.XIV]{Sh:f}), and here the difference does not matter, see
e.g.~\ref{n9}(2). 
\end{remark}

\begin{definition}
\label{z4}
1) The $\lambda$-Cohen forcing is $({}^{\lambda >}2,\triangleleft)$.

\noindent
2) A forcing notion $\bbQ$ is $\lambda$-bounding or ${}^\lambda
\lambda$-bounding \when \, $\Vdash_{\bbQ}$ ``for every function $f$ from
$\lambda$ to $\lambda$ there is $g \in ({}^\lambda \lambda)^{\bold V}$ such
that $f \le g$, i.e., $\alpha < \lambda \Rightarrow f(\alpha) \le
g(\alpha)"$.

\noindent
3) We say that a $\bbQ$--name $\name\eta \in {}^\alpha \beta$ is a generic
of $\bbQ$ \when \, for some sequence $\langle \tau_p:p \in \bbQ\rangle$,
$\tau_p$ an absolute function definable in $\bold V$ (or even a $(|\alpha| +
|\beta|)$-Borel one) from ${}^\alpha \beta$ into $\{0,1\}$ we have $\Vdash
``p \in \name{\bold G}$ iff $\tau_p(\name\eta) = 1"$.
\end{definition}

\begin{definition}
\label{z6}
\begin{enumerate}
\item Let $S_\inc$ be the class of all (strongly) inaccessible cardinals and
  let $S^\kappa_{\inc} = \{\partial:\partial < \kappa$ is inaccessible$\}$. 
\item We say ``$S$ is nowhere stationary" \when \, $S$ is a set of ordinals,
  and for every ordinal $\delta$ of uncountable cofinality, $S \cap \delta$
  is not a stationary subset of $\delta$.
\item For a set $p$ of sequences of ordinals and $\eta$ let
  $p^{[\eta]} = \{\nu \in p:\nu \trianglelefteq \eta$ or
  $\eta \trianglelefteq \nu\}$ and
  $p^{[\ge \eta]} = \{\nu \in p:\eta \trianglelefteq \nu\}$.
\end{enumerate}
\end{definition}

\begin{definition}
\label{z26}
For an ideal $\bbI$ of subsets of $X$, including all singletons for
simplicity, we define ``the four basic cardinal invariants of the ideal":
\begin{enumerate}
\item[(a)]  $\cov(\bbI)$, the covering number is $\min\{\theta$:
  there are $A_i \in \bbI$ for $i < \theta$ whose union is $X\}$,
\item[(b)] $\add(\bbI)$, the additivity of $\bbI$ is $\min\{\theta$: there
  are $A_i \in \bbI$ for $i < \theta$ whose union is not in $\bbI\}$, 
\item[(c)]  $\cf(\bbI)$, the cofinality of $\bbI$ is $\min\{\theta$: there
  are $A_i \in \bbI$ for $i < \theta$ such that $(\forall A \in
  \bbI)(\exists i)(A \subseteq A_i)\}$,
\item[(d)]  $\non(\bbI)$, the uniformity of $\bbI$ is $\min\{|Y|:Y
  \subseteq X$ but $Y \notin \bbI\}$. 
\end{enumerate}
\end{definition}

\begin{remark}
\label{z30}
We may use, e.g., $\cov(\meagre_\lambda)$ and $\cov(\Cohen_\lambda)$,
they denote the same number.
\end{remark}

\begin{observation}
\label{z27}
For any ideal $\bbI$:
\begin{enumerate}
\item[$(a)$]  $\add(\bbI) \le \cov(\bbI) \le \cf(\bbI)$,
\item[$(b)$]  $\add(\bbI) \le \non(\bbI) \le \cf(\bbI)$
\end{enumerate}
\end{observation}

\section{Like random real forcing for weakly compact $\kappa$} 
\label{Like}  

We consider the following question.
\begin{qwa}
\label{a0}
\begin{enumerate}
\item Is there a non-trivial forcing notion which is $\lambda^+$--c.c.,
  $({<} \lambda)$--strategically complete and which does not add a
  $\lambda$--Cohen sequence from ${}^\lambda 2$ ?
\item Moreover is $\lambda$-bounding ?
\end{enumerate}
\end{qwa}

Recall that for $\lambda = \aleph_0$, ``random real forcing" is such 
forcing notion but we do not know to generalize measure to $\lambda$
with $\lambda$--completeness or so, whereas for Cohen forcing and many
other definable forcing notions which add a Cohen real we know how to
generalize. 

We have wondered about this a long time, see \cite{Sh:945} and some papers
of Ros{\l}anowski and Shelah \cite{RoSh:777,RoSh:860,RoSh:888,RoSh:942}.  Up to
recently, we were sure that the answer was negative. Surprisingly for
$\lambda$ weakly compact there is a positive answer, a posteriori a
straightforward one.

We will define a forcing notion $\bbQ_\kappa$ by induction on the
inaccessible $\kappa$.  Now, for $\kappa$ the first inaccessible
$\bbQ_\kappa$ is the $\kappa$-Cohen forcing. In fact, if $\kappa$ is
inaccessible but not a limit of inaccessible cardinals, then $\bbQ_\kappa$
is equivalent to the $\kappa$--Cohen forcing.  If $\kappa$ is a limit of
inaccessibles, the conditions are such that the generic $\name\eta \in
{}^\kappa 2$ satisfies for many inaccessibles $\partial<\kappa$, that
$\name\eta \rest \partial$ is somewhat $\partial$--Cohen, e.g., if $\langle
\cI_\partial:\partial \in S \rangle$ is a sequence such that $\cI_\partial$
is a dense open subset of ${}^\partial 2$ and $S =
\{\partial<\kappa:\partial$ is the first strong inaccessible in
$(\alpha,\kappa)$ for some $\alpha < \kappa\}$, \then \, for  every large
enough $\partial \in S$ we have $\name\eta \rest \partial \in \cI_\partial$.   

At first glance this may look ridiculous: $\name\eta$ is made more
Cohen--like, but still in the end, i.e., for $\kappa$ weakly compact, it has
an antithetical character.  
\bigskip

\subsection {Adding an $\eta \in {}^\kappa 2$} \label{Adding} \
\bigskip

\begin{notation}
\label{n3}
1) Here $\partial,\kappa$ will denote strongly inaccessible cardinals.

\noindent
2) For $\cT \subseteq {}^{\alpha >}2$ and $\eta \in {}^{\alpha >}2$ let
$\cT^{[\eta]} = \{\nu:\nu \trianglelefteq \eta$ or $\eta \trianglelefteq \nu
\in \cT\}$.

\noindent
3) For $\cT \subseteq {}^{\delta >}2$ let $\lim_\delta(\cT) = \{\nu \in
{}^\delta 2:(\forall \alpha < \delta)(\nu \rest \alpha \in \cT)\}$.
\end{notation}

\begin{definition}
\label{n5}
We define a forcing notion $\bbQ_\kappa = \bbQ^2_\kappa$ by induction
on inaccessible $\kappa$:
\begin{enumerate}
\item[(A)]  $p \in \bbQ_\kappa$ \Iff \, there is a witness
$(\varrho,S,\bar\Lambda)$ which means:
\begin{enumerate}
\item[(a)]  $p$ is a subtree of ${}^{\kappa >}2$, i.e., a non-empty subset
  of ${}^{\kappa >}2$ closed under initial segments, 
\item[(b)]  
\begin{enumerate}
\item[$(\alpha)$] $S \subseteq \kappa$ is not stationary, moreover
\item[$(\beta)$]   for every strongly inaccessible $\partial\leq\kappa$ the 
  set  $S\cap\partial$ is not stationary,  
\item[$(\gamma)$] every member of $S$ is (strongly) inaccessible,
\end{enumerate}
\item[(c)]  $\varrho = \tr(p)$ is the trunk of $p$ which means:
\begin{enumerate}
\item[$(\alpha)$] $\varrho \in {}^{\kappa >}2$,
\item[$(\beta)$] $\alpha \le \ell g(\varrho) \Rightarrow p \cap {}^\alpha 2
  = \{\varrho \rest \alpha\}$, hence $\tr(p) \in p$,
\item[$(\gamma)$] both $\varrho \char 94 \langle 0 \rangle$ and $\varrho
  \char 94\langle 1 \rangle$ belongs to $p$,
\end{enumerate}
\item[(d)]  if $\varrho \trianglelefteq \eta \in p$ then $\eta \char
  94 \langle 0 \rangle,\eta \char 94 \langle 1 \rangle \in p$,
\item[(e)] [continuity] if $\delta \in \kappa \backslash S$ is a limit
  ordinal $> \ell g(\varrho)$ and $\eta \in {}^\delta 2$ \then \, 
\[\eta \in p\ \mbox{ iff }\ (\forall \alpha < \delta)(\eta \rest \alpha \in
p), \]
\item[(f)]  
\begin{enumerate}
\item[$(\alpha)$] $\bar\Lambda = \langle\Lambda_\partial:\partial \in
  S\rangle$,
\item[$(\beta)$] $\Lambda_\partial$ is a set of $\le \partial$ dense open
  subsets of $\bbQ_\partial$,
\end{enumerate}
\item[(g)]  if $\partial \in S$ and $\partial > \ell g(\varrho)$ and
  $\eta\in {}^\partial 2$, then
\begin{enumerate}
\item[$(\alpha)$]  $p \cap {}^{\partial >}2 \in \bbQ_\partial$,
\item[$(\beta)$] $\eta \in p$ \Iff \, $(\forall \alpha<\partial)(\eta \rest
  \alpha \in p)$ and $(\forall \cI \in \Lambda_\partial)(\exists q \in
  \cI)[\eta \in \lim_\partial(q)]$. 
\end{enumerate}
\end{enumerate}
\item[(B)]  $\bbQ_\kappa \models ``p \le q"$ \Iff \, $p \supseteq q$.
\item[(C)]  
\begin{enumerate}
\item[(a)] Let $S_p = \{\delta < \kappa:\delta > \ell g(\tr(p)),\delta$ is
  a limit ordinal and  $\neg(\forall \eta \in {}^\delta 2)[\eta \in p \leftrightarrow
(\forall \alpha < \delta)(\eta \rest \alpha \in p)]\}$,
so $S_p\subseteq S$ when $(\tr(p),S,\bar\Lambda)$ is a witness.
\item[(b)] We say $(\tr(p),S,\bar\Lambda,E)$ is a full witness for $p \in
  \bbQ_\kappa$ if $(\tr(p),S,\bar\Lambda)$ is a witness for $p \in
  \bbQ_\kappa$ and $E$ is a club of $\kappa$ disjoint to $S$ and to $[0,\ell
  g(\tr(p)))$, 
\end{enumerate}
\end{enumerate}
\end{definition}

\begin{claim}
\label{n7}
1) For any $\kappa$ and $\eta \in {}^{\kappa >}2$ we have $({}^{\kappa
  >}2)^{[\eta]}$ is a member of $\bbQ_\kappa$ with $\tr(({}^{\kappa
  >}2)^{[\eta]}) = \eta$.

\noindent
2) If $p \in \bbQ_\kappa$ and $\ell g(\tr(p)) < \partial < \kappa$ \then \,
$p \cap {}^{\partial >}2$ belongs to $\bbQ_\partial$.

\noindent
3) If $p \in \bbQ_\kappa$ and $\eta \in p$ \then \, $p^{[\eta]} \in
\bbQ_\kappa$ and $p \le p^{[\eta]}$ and $\tr(p^{[\eta]})$ is $\eta$ if $\ell
g(\eta) \ge \ell g(\tr(p))$ and is $\tr(p)$ otherwise.

\noindent
4) ${}^{\kappa >}2$ is the minimal member of $\bbQ_\kappa$.

\noindent
5) If $(\tr(p),S,\bar\Lambda)$ is a witness for $p \in \bbQ_\kappa$ and
$\ell g(\tr(p)) \ge \sup(S)$ \then \, $p = ({}^{\kappa >}2)^{[\tr(p)]}$.

\noindent
6) Any triple $(\varrho,S,\bar\Lambda)$ is a witness for at most one $p$.

\noindent
7) If $(\varrho,S,\bar\Lambda)$ satisfies clauses
$(c)(\alpha),(b)(\alpha),(\beta),(\gamma),(f)(\alpha),(\beta)$ of Definition
\ref{n5}(A) \then \, there is one and only one $p \in \bbQ_\kappa$ which it
witnesses. 

\noindent
8) If $(\varrho,S,\bar\Lambda)$ witnesses $p \in \bbQ_\kappa$, \then \, also
$(\varrho,S_p,\bar\Lambda \rest S_p)$ witnesses it recalling Definition
\ref{n5}(C)(a).

\noindent
9) For every $p \in \bbQ_\kappa$ there is a maximal antichain $\cI$ to which
$p$ belongs and $q_1 \ne q_2 \in \cI \Rightarrow \lim_\kappa(q_1) \cap
\lim_\kappa(q_2) = \emptyset$ hence $\{q \in \bbQ_\kappa:p \le_{\bbQ_\kappa}
q$ \underline{or} $\lim_\kappa(q) \cap \lim_\kappa(p) = \emptyset\}$ is
dense open.
\end{claim}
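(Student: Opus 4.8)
The plan is to treat parts (1)--(8) as coherence/bookkeeping facts, each obtained by writing down the obvious candidate witness and checking clauses $(a)$--$(g)$ of Definition~\ref{n5}(A); an induction on the inaccessible $\kappa$, run in parallel with the definition of $\bbQ_\kappa$, handles the cases in which clause $(g)(\alpha)$ — which refers to $\bbQ_\partial$ for $\partial\in S$ — is not simply inherited. Concretely: for (1) the witness for $({}^{\kappa >}2)^{[\eta]}$ is $(\eta,\emptyset,\langle\rangle)$, and continuity $(e)$ holds because if all proper initial segments of $\zeta\in{}^\delta 2$ (with $\delta>\ell g(\eta)$) lie in the set, then so does $\zeta\rest(\ell g(\eta)+1)$, which forces $\eta\trianglelefteq\zeta$; for (4), ${}^{\kappa >}2$ has witness $(\langle\rangle,\emptyset,\langle\rangle)$ and is $\le$-minimal since $p\le q\Leftrightarrow p\supseteq q$; for (2), a witness for $p\cap{}^{\partial >}2$ is $(\tr(p),\,S\cap\partial,\,\bar\Lambda\rest(S\cap\partial))$, each clause being inherited from a witness for $p$ restricted to the levels below $\partial$ (here clause $(b)$ uses $(b)(\beta)$ of $p$'s witness at $\partial$); for (5), $\sup(S)\le\ell g(\tr(p))$ makes clause $(g)$ vacuous, whence $(d)$ and $(e)$ alone force $p$ to be the full cone above $\tr(p)$, i.e.\ $p=({}^{\kappa >}2)^{[\tr(p)]}$.

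For (3): if $\ell g(\eta)<\ell g(\tr(p))$ then $\eta=\tr(p)\rest\ell g(\eta)$, every node of $p$ is comparable with $\eta$, and $p^{[\eta]}=p$; if $\ell g(\eta)\ge\ell g(\tr(p))$ then $(c)(\beta)$ gives $\tr(p)\trianglelefteq\eta$, and a witness for $p^{[\eta]}$ is $\bigl(\eta,\ S\cap(\ell g(\eta),\kappa),\ \bar\Lambda\rest(S\cap(\ell g(\eta),\kappa))\bigr)$; here $(g)(\alpha)$ follows from the identity $p^{[\eta]}\cap{}^{\partial >}2=(p\cap{}^{\partial >}2)^{[\eta]}$ together with (2) and the induction hypothesis (part (3) for $\partial<\kappa$), while $(e)$ and $(g)(\beta)$ reduce to the corresponding clauses for $p$, and $p\le p^{[\eta]}$ is immediate from $p^{[\eta]}\subseteq p$. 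For (6) and (7): by induction on $\alpha<\kappa$ a witness $(\varrho,S,\bar\Lambda)$ determines $p\cap{}^\alpha 2$ uniquely --- it is $\{\varrho\rest\alpha\}$ for $\alpha\le\ell g(\varrho)$, is dictated by $(d)$ at successors, by $(e)$ at limits outside $S$, and by $(g)(\beta)$ at $\partial\in S$ --- the point being that the clause ``$(\forall\cI\in\Lambda_\partial)(\exists q\in\cI)(\eta\in\lim_\partial(q))$'' depends only on $\eta$ and $\Lambda_\partial$, not on $p$; this gives (6). For (7) one runs the same recursion to build $p$, the only nontrivial verification being that at $\partial\in S$ with $\partial>\ell g(\varrho)$ the tree $p\cap{}^{\partial >}2$ already built coincides with the element of $\bbQ_\partial$ produced by the same recursion from $(\varrho,\,S\cap\partial,\,\bar\Lambda\rest(S\cap\partial))$, and so lies in $\bbQ_\partial$ by the induction hypothesis, giving $(g)(\alpha)$. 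For (8): $S[p]$ is by its definition exactly the set of limit $\delta>\ell g(\tr(p))$ at which continuity fails for $p$, so $(e)$ holds automatically with $S$ replaced by $S[p]$, while at $\partial\in S\setminus S[p]$ continuity combined with $(g)(\beta)$ shows the $\Lambda_\partial$-constraint is automatically met by the relevant $\eta$'s, so deleting $\partial$ changes nothing.

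For (9), call $\nu\in{}^{\kappa >}2$ a \emph{boundary node} of $p$ if $\nu\notin p$ but $\nu\rest\beta\in p$ for every $\beta<\ell g(\nu)$, and set $\cI:=\{p\}\cup\{({}^{\kappa >}2)^{[\nu]}:\nu\text{ a boundary node of }p\}$, each member of which lies in $\bbQ_\kappa$ by (1) and (4). The boundary nodes are pairwise $\triangleleft$-incomparable (if $\nu\triangleleft\nu'$ with $\nu'$ a boundary node then $\nu\in p$) and no branch of $p$ passes through one of them (if $\nu\trianglelefteq\eta\in\lim_\kappa(p)$ then $\nu=\eta\rest\ell g(\nu)\in p$), which yields the pairwise-disjoint-limit requirement. $\cI$ is an antichain: a common upper bound of two distinct members would be a condition contained in the intersection of their node-sets, and each such intersection is a $\trianglelefteq$-chain, hence contains no condition (a condition branches immediately above its trunk). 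For maximality, let $r\in\bbQ_\kappa$; if $r\subseteq p$ then $p\le r$, so $r$ is compatible with $p$; otherwise pick $\sigma\in r$ with $\sigma\notin p$, let $\beta_0$ be least with $\sigma\rest\beta_0\notin p$, and put $\nu:=\sigma\rest\beta_0$ --- then $\nu$ is a boundary node of $p$ with $\nu\in r$, and by (3) the condition $r^{[\nu]}$ extends both $r$ and $({}^{\kappa >}2)^{[\nu]}$, so $r$ is compatible with $({}^{\kappa >}2)^{[\nu]}\in\cI$. Finally the displayed set is visibly open and contains the $\le$-cone above each member of $\cI$, so, $\cI$ being a maximal (hence predense) antichain, the set is dense.

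The only real friction I anticipate is in (7): one has to be certain the level-by-level recursion genuinely lands in $\bbQ_\kappa$ at the $S$-levels --- precisely where the induction on $\kappa$ is used --- and the single idea that makes the witnesses rigid in (6)--(8) and makes the boundary-node antichain in (9) work without any amalgamation lemma is the \emph{locality} of the clause $(g)(\beta)$ constraint: it is a property of $\eta$ and $\Lambda_\partial$ alone. Apart from that, all of (1)--(9) should be pure unwinding of Definition~\ref{n5}.
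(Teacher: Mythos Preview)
Your proposal is correct and follows essentially the same approach as the paper. The paper's own proof is extremely terse---for part~(1) it writes the witness $(\eta,\emptyset,\langle\rangle)$, for part~(2) the witness $(\tr(p),S\cap\partial,\bar\Lambda\rest(S\cap\partial))$, dismisses parts~(3)--(8) as ``Easy, too,'' and for part~(9) simply writes down the antichain $\cI=\{({}^{\kappa>}2)^{[\rho]}:\rho\in{}^{\kappa>}2\setminus p\text{ and }\alpha<\ell g(\rho)\Rightarrow\rho\rest\alpha\in p\}\cup\{p\}$, which is exactly your boundary-node construction; your explicit induction on $\kappa$ to handle clause~$(g)(\alpha)$ in parts~(3) and~(7), and your verification of the antichain and maximality properties in~(9), supply the details the paper leaves implicit.
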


\begin{PROOF}{\ref{n7}}
1) Let $S = \emptyset$. Then $(\eta,\emptyset,<>)$ is a witness.

\noindent
2) If $(\tr(p),S,\langle \Lambda_\theta:\theta \in S\rangle)$ witnesses
$p\in \bbQ_\kappa$, then $(\tr(p),S \cap \partial, \langle
\Lambda_\theta:\theta\in S\cap \partial\rangle)$ witnesses $p \cap
{}^{\partial >}2 \in \bbQ_\partial$. 

\noindent
3) - 8)  Easy, too.

\noindent
9) Let $\cI = \{({}^{\kappa >}2)^{[\rho]}:\rho \in {}^{\kappa >}2
   \backslash p$ and $\alpha < \ell g(\rho) \Rightarrow \rho \rest
   \alpha \in p\} \cup \{p\}$.
\end{PROOF}

\begin{claim}
\label{n9}
1) If $p \in \bbQ_\kappa$ and $\rho \in p$, \then \, there is $\eta$ such 
that $\rho \trianglelefteq \eta \in \lim_\kappa(p)$.

\noindent
2) If $\bar p = \langle p_i:i < \delta\rangle$ is a sequence of members of
$\bbQ_\kappa$, $\bar p$ is increasing or at least $i < j < \delta
\Rightarrow \tr(p_j) \in p_i$, $\langle \tr(p_i):i < \delta\rangle$ is
$\trianglelefteq$--increasing and 
\begin{enumerate}
\item[$(\odot)$] $\alpha < \delta\quad \Rightarrow\quad\min\big(S_{p_\alpha} 
  \setminus \sup\{\ell g(\tr(p_i) +1:i < \delta\}\big) > \delta$,
\end{enumerate}
\then \, $p_\delta = \bigcap\{p_i:i < \delta\}$ is a $\le_{\bbQ_\kappa}$--lub
of $\bar p$.

\noindent
3) If $\delta < \kappa$, $p_i \in \bbQ_\kappa$ is
$\le_{\bbQ_\kappa}$--increasing with $i <\delta$, $(\eta_i,S_i,
\bar\Lambda_i,E_i)$ is a full witness for $p_i$ satisfying $i<j< \delta
\Rightarrow E_j \subseteq E_i \wedge \min(E_i) < \ell g(\tr(p_j))$, \then \,
the sequence $\langle p_i:i < \delta\rangle$ has a
$\le_{\bbQ_\kappa}$--upper bound. 

\noindent
4) If $p \in \bbQ_\kappa$ and $\cI_i$ is a dense subset of $\bbQ_\kappa$ for
$i<i(*)$ and $i(*) < \kappa^+$ and $\rho \in p$ \then \, there is $\eta$
such that $\rho \triangleleft \eta \in \lim_\kappa(p)$ and $(\forall i <
i(*))(\exists q \in \cI_i)(\eta \in \lim_\kappa(q))$.

\noindent
5) In (2) we may replace the demand $(\odot)$ with 
\begin{enumerate}
\item[$(\otimes)$]  
  \begin{enumerate}
\item[(a)] $\sup\{\ell g(\tr(p_i)):i<\delta\}\notin S_{p_\alpha}$ for  
    $\alpha<\delta$, 
\item[(b)] if $\langle \tr(p_i):i<\delta\rangle$ is eventually constant, say 
  $\rho$, \then \, $\min\big(S_{p_\alpha}\setminus (\ell g(\rho)+1)
  \big)>\delta$. 
  \end{enumerate}
\end{enumerate}
\end{claim}

\begin{PROOF}{\ref{n9}}
We prove by induction on the inaccessibles  $\kappa$ that the five parts of
the claim hold. 
\bigskip

\noindent
1) Let $(\tr(p),S,\bar\Lambda)$ be a witness for $p$. By \ref{n7}(3)
   \wilog \, $\rho \trianglelefteq \tr(p)$.
\medskip

\noindent
\underline{Case 1}:  In $S$ there is a last member $\partial$ and 
$\partial > \ell g(\tr(p))\geq \ell g(\rho)$. \\ 
By \ref{n7}(2), $p_1 = p \cap {}^{\partial >}2$ belongs to $\bbQ_\partial$. 
Apply the induction hypothesis \ref{n9}(4) for $\partial$ with $p 
\cap {}^{\partial >}2, \Lambda_\partial$ here standing for $p,\langle
\cI_i:i < i(*)\rangle$ there to find $\varrho$ such that $\rho 
\triangleleft \varrho \in p \cap {}^\partial 2$. Now $p^{[\varrho]} =
({}^{\kappa >}2)^{[\varrho]}$ by \ref{n7}(5), so the rest should be clear. 
\medskip

\noindent
\underline{Case 2}:  $\sup(S) \le \ell g(\tr(p))$.\\
By \ref{n7}(5) we know that $p=({}^{\kappa >}2)^{[\tr(p)]}$. 
\medskip

\noindent
\underline{Case 3}: Neither Case 1 nor Case 2, i.e., $\sup(S) > \ell
g(\tr(p))$ and $S$ has no last element.\\
Let $\theta = \cf(\otp(S))$ and let $\langle \alpha_\varepsilon:\varepsilon
< \theta\rangle$ be increasing continuous with limit $\sup(S)$.  \Wilog \,
$\alpha_0 = \ell g(\tr(p))$ and $\varepsilon < \theta \Rightarrow
\alpha_{\varepsilon +1} \in S$ and $\omega \varepsilon < \theta \Rightarrow
\alpha_{\omega \varepsilon} \notin S$; recalling that every member of $S$ is
strongly inaccessible and $S$ is nowhere stationary this is clear.  Now we
choose $\eta_\varepsilon \in p \cap {}^{\alpha_\varepsilon}2$ by induction
on $\varepsilon<\theta$ such that $\eta_0=\tr(p)$ and $\zeta<\varepsilon
\Rightarrow \eta_\zeta \trianglelefteq \eta_\varepsilon$.  

If $\varepsilon< \theta$ is limit, then we let $\eta_\varepsilon =
\bigcup\{\eta_\zeta:\zeta < \varepsilon\}$ and we note that it belongs to
$p$ by clause (A)(e) of Definition \ref{n5} (because $\alpha_\varepsilon
\notin S$). 

If $\varepsilon = \zeta +1<\theta$, then we use the induction hypothesis of
part (4) for $\partial = \alpha_\varepsilon$, because
$\alpha_\varepsilon\in S$, a set of inaccessibles.

After the inductive construction is carried out, if $\theta = \kappa$,
i.e., $\sup(S) = \kappa$ then $\eta_\theta:=\bigcup\{\eta_\varepsilon:
\varepsilon < \kappa\}$ is as  required. If $\theta<\kappa$, i.e.,
$\sup(S)<\kappa$ then $\eta_\theta :=\bigcup\{\eta_\varepsilon:\varepsilon <
\theta\}\in p\cap {}^{\sup(S)}2$ (remember Definition \ref{n5}(A)(e)) and
again by \ref{n7}(5) we have $p^{[\eta_\theta]} = ({}^{\kappa
  >}2)^{[\eta_\theta]}$ so we can easily finish.
\bigskip

\noindent
2) Let $(\eta_i,S_i,\bar\Lambda_i)$ be a witness for $p_i \in \bbQ_\kappa$
for $i < \delta$, \wilog \, $S_i = S_{p_i}$, see clause (C) of Definition
\ref{n5} or Claim \ref{n7}(8).  By our assumptions the sequence $\langle
\eta_i:i < \delta\rangle$ is $\trianglelefteq$--increasing and let
$\eta_\delta = \bigcup\{\eta_i:i < \delta\}$.  Now if $i,j < \delta$ and
$i<j$ then $\eta_j = \tr(p_j)\in p_i$ and if $j<i$ then
$\eta_j\trianglelefteq\eta_i= \tr(p_i)$. Hence $\eta_i \in \bigcap\{p_j:j <
\delta\} = p_\delta$ for all $i < \delta$. Consequently, recalling $i <
\delta \Rightarrow \min(S_i\backslash \sup\{\ell g(\tr(p_j))+1:j < \delta\})
> \delta$, we get $\eta_\delta\in p_i$ for all $i <\delta$ and thus
$\eta_\delta\in p_\delta$.   

Let $S := \bigcup\{S_i:i < \delta\} \backslash (\ell g(\eta_\delta)+1)$
and $\bar\Lambda_i = \langle \Lambda_{i,\partial}:\partial \in
S_i\rangle$ and for $\partial \in S$ let $\Lambda_\partial :=
\bigcup\{\Lambda_{i,\partial}:i < \delta$ and $\partial \in S_i\}$.  So
clearly $\Lambda_\partial$ is a set of $\le |\delta| \cdot \partial$
dense subsets of $\bbQ_\partial$.  Also $\partial \in S \Rightarrow
\partial > \delta$ because if $\partial \in S$ then for some $i <
\delta$, $\partial \in S_i$ and by an assumption $\min(S_i \backslash
\sup\{\ell g(\tr(p_i)+1:i < \delta\}) > \delta$ hence $\partial > \delta$.
It follows that $|\Lambda_\partial| \le \partial$.  Now one easily shows
that $\eta_\delta,S,\langle \Lambda_\partial:\partial \in S\rangle$ witness
that $p_\delta = \bigcap\{p_i:i < \delta\}$ belongs to $\bbQ_\kappa$; being
a $\le_{\bbQ_\kappa}$-lub of $\bar p$ is obvious by the definition of
$\le_{\bbQ_\kappa}$.  
\bigskip 

\noindent 
3) \Wilog \, $\delta$ is a limit ordinal. The assumptions on $p_i,E_i$ imply
that $\eta_i \vartriangleleft \eta_j$ when $i<j<\delta$ and  $\delta\leq
\sup\{\ell g(\eta_i):i<\delta\}\in \bigcap_{\alpha<\delta} E_\alpha$. Consequently, 
\[\min\big(S_{p_\alpha}\setminus \sup\{\ell g(\tr(p_i)):i<\delta\}\big)>
\sup\{\ell g(\tr(p_i)):i<\delta\}\geq \delta\]
and we may apply part (2). 
\bigskip 

\noindent
4) \Wilog \, $\rho \trianglelefteq \tr(p)$ (recalling \ref{n7}(3)) and
$i(*)=\kappa$.

First, if $\kappa > \delta_* := \sup\{\partial:\partial < \kappa$
inaccessible$\}$ then by part (1) which, for $\kappa$, was already proven
there is $\eta \in p$ such that $\ell g(\eta) > \delta_*, \ell
g(\tr(p))$. Then $p \le_{\bbQ_\kappa} p^{[\eta]} = ({}^{\kappa
  >}2)^{[\eta]}$ and $p^{[\eta]}\leq_{\bbQ_\kappa} q\ \Rightarrow\
q=\big({}^{\kappa>} 2\big)^{[\tr(q)]}$. Consequently, the claim becomes a
case of the Baire category theorem for ${}^\kappa 2$. 

So we assume that $\delta_* = \kappa$ and by induction on $i<\kappa$ we
choose $p_i,\eta_i,S_i, \bar\Lambda_i,E_i$ such that:
\begin{enumerate}
\item[(a)] $p_i \in \bbQ_\kappa$ and $(\eta_i,S_i,\bar\Lambda_i,E_i)$ is a
  full witness for this, 
\item[(b)] $p\leq p_0$, and $i<j<\kappa\quad \Rightarrow\quad p_i
  \le_{\bbQ_\kappa} p_j$, 
\item[(c)]  $i<j < \kappa\quad \Rightarrow\quad E_j \subseteq E_i \wedge 
  \min(E_i) < \ell g(\tr(p_j))$,
\item[(d)]  for every $i<\kappa$, for some $q_i \in \cI_i$ we have $q_i\leq
  p_i$. 
\end{enumerate}
Why can we carry out the induction? At stage $\delta$ of the construction we
use part (3) which we have already proved to find an upper bound $q$ to
$\{p_i:i<\delta\}\cup\{p\}$. Then, as $\cI_\delta$ is dense, we may pick
$q_\delta\in \cI_\delta$ stronger than $q$. Let $\partial<\kappa$ be an
inaccessible cardinal larger than $\ell g(\tr(q_\delta))$ and
$\sup\{\min(E_i)+1:i<\delta\}$. By part (1) which we have already 
proved there exists $\eta_\delta\in q_\delta\cap {}^\partial 2$. Now it should be
clear that we may choose $p_\delta,S_\delta, \bar\Lambda_\delta,E_\delta$
such that $(\eta_\delta,S_\delta, \bar\Lambda_\delta,E_\delta)$ is a full
witness for $p_\delta\in\bbQ_\kappa$ and $q_\delta\leq p_\delta$ and
$E_\delta\subseteq \bigcap_{i<\delta} E_i$. 

Having carried out the induction, $\eta := \bigcup\{\tr(p_i):i < \kappa\}$
is as required.
\bigskip 

\noindent
5) It can be easily reduced to part (2), but let us elaborate. Without loss
of generality $\delta=\cf(\delta)$ and let
$\nu=\bigcup\{\tr(p_i): i<\delta\}$. For each $i< \delta$, we have
$j\in (i,\delta)\Rightarrow\tr(p_i)\trianglelefteq \tr(p_j)\in p_j$ and
$j<i\Rightarrow \tr(p_i)\in p_j$, so together we have
$\tr(p_i)\in \bigcap \{p_j: j<\delta)\}$. Hence, remembering $(\otimes)$(a),
we have $\nu\in \bigcap\limits_{i<\delta} p_i$. If
$\langle\tr(p_i) : i < \delta\rangle$ is not eventually constant, then
$\lg(\nu)\geq \cf(\delta$), and hence $(\odot)$ of part (2) holds and we are
done. If $\langle\tr(p_i): i< \delta\rangle$ is eventually constant then
also $(\odot)$ of part (2) holds so we are done too. By the last two
sentences we are done.
\end{PROOF}

\begin{claim}
\label{n12}
Assume
\begin{enumerate}
\item[(a)] $\alpha \le \beta < \kappa$,
\item[(b)] $\eta \in {}^\beta 2$,
\item[(c)] $(\tr(p_i),S_i,\bar\Lambda_i)$ witness $p_i\in \bbQ_\kappa$ for 
  $i < \alpha$, 
\item[(d)] $\tr(p_i) \trianglelefteq \eta \in p_i$,
\item[(e)] $S = \bigcup\{S_i:i < \alpha\} \backslash (\ell g(\eta)+1)$,
\item[(f)] for $\partial \in S$ we let $\Lambda_\partial :=
  \bigcup\{\Lambda_{i,\partial}: \partial \in S_i\}$ (so it is a set of
  $\le \partial$ dense subsets of  $\bbQ_\partial$). 
\end{enumerate}
Then $\bigcap\{p^{[\eta]}_i:i < \alpha\} \in \bbQ_\kappa$ is a
$\le_{\bbQ_\kappa}$--lub of $\{p_i^{[\eta]}:i < \alpha\}$ and has the
witness $(\eta,S,\langle \Lambda_\partial:\partial \in S\rangle)$. 
\end{claim}

\begin{PROOF}{\ref{n12}}
  Should be clear.
\end{PROOF}

\begin{observation}
\label{p10}
\begin{enumerate}
\item  If $p,q \in \bbQ_\kappa$ and $\bbQ_\kappa \models ``p \nleq q"$ \then
  \, for some $r$, we have $q \le_{\bbQ_\kappa} r$ and $r,p$ are
  incompatible (so $\lim_\kappa(p),\lim_\kappa(r)$ are disjoint). 
\item If $p_1,p_2 \in \bbQ_\kappa$ \then \, the following conditions are
  equivalent: 
\begin{enumerate}
\item[(a)]  $p_1,p_2$ are compatible, 
\item[(b)]  the sets $\lim_\kappa(p_1),\lim_\kappa(p_2)$ are not disjoint, 
\item[(c)]  $\tr(p_1) \in p_2$ and $\tr(p_2) \in p_1$,
\item[(d)]  $\tr(p_1) \trianglelefteq \tr(p_2) \in p_1$ \underline{or}
  $\tr(p_2) \trianglelefteq \tr(p_1) \in p_2$. 
\end{enumerate}
\item If $p\in\bbQ_\kappa$, \then \, there is a maximal antichain above $p$
  of cardinality $\kappa$.
\item The $\bbQ_\kappa$--name $\name{\eta}_\kappa = \bigcup\{\tr(p):p
  \in \name{\bold G}_{\bbQ_\kappa}\}$ is a name for a $\kappa$--real which
  is generic for $\bbQ_\kappa$, i.e., $\name{\bold G}_{\bbQ_\kappa}$ is
  computable from $\name{\eta}_\kappa$ over $\bold V$. 
\end{enumerate}
\end{observation}

\begin{PROOF}{\ref{p10}}
(1)\quad As $p \nleq q$, by the definition of $\le_{\bbQ_\kappa}$ we have
$q \nsubseteq p$, so we can choose $\nu \in q \setminus p$. Let $r =
q^{[\nu]}$, so $q \le r$ by \ref{n7}(3). Since $\tr(r)=\nu\notin p$, we are
done by (2). 
\medskip

\noindent
(2)\quad First, $(a) \Rightarrow (b)$ as letting $r$ be a common upper bound 
of $p_1,p_2$ we have $\lim_\kappa(r) \subseteq \lim_\kappa(p_1) \cap
 \lim_\kappa(p_2)$ and recall  $r \in \bbQ_\kappa \Rightarrow
\lim_\kappa(r) \ne \emptyset$ by \ref{n9}(1).

Second, $(b) \Rightarrow (c)$ as $\eta \in \lim_\kappa(p_\ell) \ 
\Rightarrow \ \tr(p_\ell) \trianglelefteq \eta \wedge
\{\eta\rest\alpha:\alpha<\kappa\} \subseteq p_\ell$.

Third, $(c) \Rightarrow (d)$ trivially.

Fourth, $(d) \Rightarrow (a)$ as \wilog \, $\tr(p_1) \trianglelefteq
\tr(p_2) \in p_1$, hence $p^{[\tr(p_2)]}_1,p_2$ are members of
$\bbQ_\kappa$ with the same trunk so are compatible by \ref{n12}.  As
$\bbQ_\kappa \models ``p_1 \le p_1^{[\tr(p_2)]}"$, we are done.
\medskip

\noindent
(3)\quad Let $\eta\in\lim_\kappa(p)$ and for $\alpha\in [\ell g(\tr(p)),
\kappa)$ let $\nu_\alpha=(\eta\rest \alpha) \char 94 \langle 1-\eta(\alpha)
\rangle$. Then $\{p^{[\nu_\alpha]}:\alpha\in [\ell g(\tr(p)),\kappa) \}$ is
as required. 
\medskip

\noindent
(4)\quad Should be clear.
\end{PROOF}

\begin{claim}
\label{n11}
\begin{enumerate}
\item $\bbQ_\kappa$ is $\kappa$--strategically closed.
\item $\bbQ_\kappa$ satisfies the $\kappa^+$--c.c.
\end{enumerate}
\end{claim}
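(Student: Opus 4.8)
For part (1), the plan is to exhibit a winning strategy for the player COM in $\Game_\delta(p,\bbQ_\kappa)$ for every $p\in\bbQ_\kappa$ and every $\delta<\kappa$ (the same recipe handles $\delta=\kappa$, which is as much as one can hope for, since $\bbQ_\kappa$ adds a new $\name\eta\in{}^\kappa 2$). The engine is \ref{n9}(4): it provides a $\le_{\bbQ_\kappa}$-upper bound for an increasing sequence of length $<\kappa$ \emph{once} one has carried along full witnesses $(\tr(p_i),S_i,\bar\Lambda_i,E_i)$ (Definition \ref{n5}(C)) whose clubs decrease, $E_j\subseteq E_i$, and whose trunks overtake the club minima, $\min(E_i)<\ell g(\tr(p_j))$ for $i<j$. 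COM will simply maintain exactly that bookkeeping, which is possible because every $q\in\bbQ_\kappa$ has a full witness: by \ref{n7}(8) one has the witness $(\tr(q),S[q],\bar\Lambda\rest S[q])$, $S[q]$ is non-stationary in $\kappa$, hence disjoint from a club, and one deletes from that club the bounded set $[0,\ell g(\tr(q)))$.

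Concretely: at stage $0$, COM plays $p_0=p$ with some full witness, club $E_0$. At stage $\beta+1$, after INC plays $q_\beta$, COM fixes a full witness of $q_\beta$ with club $E'$, picks $\xi<\kappa$ above both $\ell g(\tr(q_\beta))$ and $\min(E_\beta)$, uses \ref{n9}(1) to get $\eta\in q_\beta\cap{}^\xi 2$ above $\tr(q_\beta)$, plays $p_{\beta+1}=q_\beta^{[\eta]}$ (a condition above $q_\beta$ with trunk $\eta$, by \ref{n7}(3)), and sets $E_{\beta+1}=(E_\beta\cap E')\setminus\xi$; truncating below $\eta$ destroys no discontinuity level, so $S[p_{\beta+1}]\subseteq S[q_\beta]$, making $E_{\beta+1}$ a legal full-witness club, and both invariants survive by the choice of $\xi$. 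At a limit $\beta<\kappa$, COM plays $p_\beta=\bigcap_{\gamma<\beta}p_\gamma$: the invariants are exactly the hypotheses of \ref{n9}(4), so this is a $\le_{\bbQ_\kappa}$-upper bound and hence a condition, with club $(\bigcap_{\gamma<\beta}E_\gamma)\setminus\ell g(\tr(p_\beta))$, which is a club since $\kappa$ is regular and $\beta<\kappa$ and is disjoint from $S[p_\beta]\subseteq\bigcup_{\gamma<\beta}S[p_\gamma]$. COM always has a legal move, so COM wins; this is part (1) (cf. Definition \ref{z3}).

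For part (2), I would prove the stronger statement that any two conditions with the same trunk are compatible. Granting it, if $A\subseteq\bbQ_\kappa$ is an antichain then $r\mapsto\tr(r)$ is injective on $A$, so $|A|\le|{}^{\kappa>}2|=\kappa$ by strong inaccessibility, which is the $\kappa^+$-c.c. So let $p,q$ have a common trunk $\varrho$, with witnesses $(\varrho,S[p],\bar\Lambda^p)$ and $(\varrho,S[q],\bar\Lambda^q)$, and form the triple $(\varrho,S,\bar\Lambda)$ with $S=S[p]\cup S[q]$ and $\Lambda_\partial=\Lambda^p_\partial\cup\Lambda^q_\partial$ for $\partial\in S$ (empty summand at a level absent from one side). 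This triple satisfies the hypotheses of \ref{n7}(7): $\varrho\in{}^{\kappa>}2$; every member of $S$ is inaccessible; each $\Lambda_\partial$ is again a set of $\le\partial$ dense open subsets of $\bbQ_\partial$; and $S$, as well as each of its initial segments, is non-stationary — on $\kappa$ and on ordinals of uncountable cofinality because the non-stationary ideal there is an ideal, and on an ordinal of cofinality $\aleph_0$ because a set of inaccessible cardinals is automatically non-stationary in it (it misses a club consisting of successor ordinals). Hence \ref{n7}(7) yields a condition $r$ witnessed by $(\varrho,S,\bar\Lambda)$, and comparing the two tree constructions level by level shows that $r$ thins at least as much as $p$ and at least as much as $q$, so $r\subseteq p$ and $r\subseteq q$, i.e. $r$ is a common upper bound.

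I expect the real obstacle to be the compatibility claim in part (2): the force of it is that applying \ref{n7}(7) to the enlarged thinning set $S$ and enlarged families $\Lambda_\partial$ still produces a \emph{non-empty} tree — equivalently, that $p$ and $q$ genuinely share cofinally many branches hitting all the dense sets in sight — which is where the content of \ref{n9} (and the inductive way $\bbQ_\partial$ was built at the $\partial\in S$) quietly does the work; one must also check that $S[p]\cup S[q]$ and its initial segments stay within the non-stationarity constraint, the countable-cofinality case using the small remark above. Part (1) is, by contrast, almost pure bookkeeping: the only point needing care is keeping the clubs decreasing and the trunks long enough that \ref{n9}(4) applies at each limit.
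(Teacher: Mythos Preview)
Your proof is correct and follows the paper's approach: part (1) via the bookkeeping that makes \ref{n9}(4) fire at limit stages, and part (2) via the fact that same-trunk conditions are compatible (the paper states this as $(*)_2$ without argument, implicitly relying on \ref{n8}; your direct construction via \ref{n7}(7) unpacks the same content). Your closing worry about \ref{n7}(7) possibly producing an empty tree is unfounded --- \ref{n7}(7) asserts outright the existence of a condition $r\in\bbQ_\kappa$ witnessed by the given triple, so non-emptiness and $\tr(r)=\varrho$ come for free, and the level-by-level comparison you sketch does yield $r\subseteq p\cap q$.
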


\begin{PROOF}{\ref{n11}}
(1) Immediate by \ref{n9}(3).
\medskip

\noindent
(2) Obviously
\begin{enumerate}
\item[$(*)_1$]  ${}^{\kappa >}2$ has cardinality $\kappa$ (recall that
  $\kappa$ is inaccessible), and 
\item[$(*)_2$]  if $p_1,p_2 \in \bbQ_\kappa$ have the same trunk \then \,
  they are compatible.
\end{enumerate}
Together we are clearly done. 
\end{PROOF}

\begin{claim}
\label{n13}
1) If $\kappa$ is weakly compact \then \, $\bbQ_\kappa$ is
$\kappa$-bounding, i.e. for every $f \in ({}^\kappa \kappa)^{\bold
  V[\bbQ_\kappa]}$ there is $g \in ({}^\kappa \kappa)^{\bold V}$ such that
$f \le g$, that is, $\alpha < \kappa \Rightarrow f(\alpha) \le g(\alpha)$.

\noindent
2) Moreover, if $p \Vdash_{\bbQ_\kappa} ``\name f \in {}^\kappa
\kappa"$ and $\beta < \kappa$ then for some $\bar\beta$ and
$q\in\bbQ_\kappa$ we have:
\begin{itemize}
\item  $p \le q$,
\item $p \cap {}^{\beta \ge} 2 = q \cap {}^{\beta \ge}2$,
\item $\bar \beta = \langle \beta(i):i <\kappa\rangle$ is increasing
  continuous, $\beta(0)\geq\beta$, $\beta(i)<\kappa$,
\item if $\nu \in q \cap {}^{\beta(i+1)}2$ then $q^{[\nu]}$ forces a value
  to $\name f(i)$. 
\end{itemize}
\end{claim}

\begin{PROOF}{\ref{n13}}
  2)\quad Let $p \Vdash ``\name f \in{}^\kappa \kappa"$.  By induction on $i
  < \kappa$ we choose $p_i,\beta(i),\varrho_i,S_i,\bar\Lambda_i$ and $E_i$
  such that 
\begin{enumerate}
\item[(i)] $p_i \in \bbQ_\kappa$,
\item[(ii)] $\langle \beta(j):j \le i\rangle$ is an increasing continuous
  sequence of ordinals $< \kappa$,
\item[(iii)] $p_0 = p$ and $\beta(0) = \max\big\{\beta,\ell g(\tr(p))
  +1\big\}$,   
\item[(iv)] $(\varrho_i,S_i,\bar\Lambda_i,E_i)$ is a full witness for $p_i
  \in \bbQ_\kappa$, 
\item[(v)] if $j<i$ then 
\begin{enumerate}
\item[$(\alpha)$] $p_j \le_{\bbQ_i} p_i$,
\item[$(\beta)$]   $p_j \cap {}^{\beta(j) \ge} 2 = p_i \cap {}^{\beta(j)
    \ge} 2$ (hence $\varrho_i = \varrho_0$), and $S_j\cap \big(\beta(j)+
  1\big)= S_i\cap\big(\beta(j)+1\big)$, $\bar{\Lambda}_j\rest
  \big(\beta(j)+1\big)=\bar{\Lambda}_j\rest \big(\beta(j)+1\big)$,
\item[$(\gamma)$] $\beta(i) \in E_j$,
\item[$(\delta)$] $E_i \subseteq E_j$ and if $i$ is limit then
  $E_i=\bigcap_{\alpha<i} E_\alpha$,
\end{enumerate}
\item[(vi)] if $i = j +1$ and $\nu \in p_i \cap {}^{\beta(i)}2$ then
  $p^{[\nu]}_i$ forces a value to $\name f(j)$. 
\end{enumerate}

For $i=0$ choose a full witness $(\varrho_0,S_0,\bar\Lambda_0,E_0)$
for $p$, and use clause (iii) to define $p_0,\beta(0)$.

For a limit $i<\kappa$ work as in the proof of \ref{n9}(2).

For a successor $i$, say $i=j+1$, we shall use the definition of ``$\kappa$
is weakly compact''.  Let $\langle q_{j,\beta}: \beta < \beta(*)\rangle$ be
a maximal antichain of $\bbQ_\kappa$ such that $q_{j,\beta} \Vdash ``\name
f(j) = \gamma"$ for some $\gamma = \gamma_{j,\beta}$ and $q_{j,\beta}$ is
$\le_{\bbQ_\kappa}$--above $p_j$ or $\lim_\kappa(q_{j,\beta}) \cap
\lim_\kappa(p_j) = \emptyset$, recalling \ref{n7}(9). Since $\bbQ_\kappa$
satisfies the $\kappa^+$-c.c., see \ref{n11}(2), we know that $\beta(*) \le
\kappa$, so by \ref{p10}(3) \wilog \, $\beta(*) = \kappa$.  
Recalling each $S_{q_{j,\beta}}$ is nowhere stationary, clearly there is a
club $E$ of $\kappa$ such that 
\[\beta < \delta \in E\quad \Rightarrow\quad \delta \in E_j \setminus 
S_{q_{j,\beta}}\mbox{ and hence also }\delta \notin S_{p_j}.\]  
By the weak compactness there is a strongly inaccessible cardinal
$\partial(j) > \beta(j)$ belonging to $E$ such that  $\{q_{j,\beta} \cap
{}^{\partial(j)   >}2:\beta < \partial(j)\}$ is a pre-dense subset of 
$\bbQ_{\partial(j)}$. Let  
\[\cI=\big\{q \in \bbQ_{\partial(j)}:\mbox{ for some }\beta<\partial(j)
\mbox{ we have }(q_{j,\beta} \cap {}^{\partial(j)>}2) \le_{\bbQ_{\partial(j)}}
q\big\}.\]
Clearly, $\cI$ is a dense open subset of $\bbQ_{\partial(j)}$.  Let 
\[\cX=\big\{\eta \in p_j \cap {}^{\partial(j)}2:\big(\exists 
\beta < \partial(j)\big)\big(\eta \in q_{j,\beta} \cap 
{}^{\partial(j)}2\big)\big\}.\]
For each $\rho \in \cX$ there is $r_{j,\rho} \geq p_j$ such that
$\tr(r_{j,\rho})=\rho$ and $r_{j,\rho}$ forces a value to
$\name{f}(j)$. Indeed, there is $\beta < \partial(j)$ such that
$\rho\in q_{j,\beta} \cap {}^{\partial(j)}2$, so by our
assumptions on the $q_{j,\beta}$'s necessarily $p_j \le q_{j,\beta}$, so
$q^{[\rho]}_{j,\beta}$ can serve as $r_{j,\rho}$.  Let
$(\rho,S_{j,\rho},\bar\Lambda_{j,\rho})$ witness
$r_{j,\rho} \in \bbQ_\kappa$. Lastly, we let
\begin{enumerate}
\item[(a)]   $p_i = \bigcup\{r_{j,\rho}:\rho \in \cX\}$, 
\item[(b)]   $\beta(i) = \min \big(E_j\setminus(\partial(j) +1)\big)$,  
\item[(c)]   $S_i = S'_i \cup S''_i \cup \{\partial(j)\}$, where
\[S'_i =\bigcup\big\{S_{r_{j,\rho}}:\rho \in (p_i \cap {}^{\partial(j)}2)
\big\}\backslash (\partial(j)+1) \quad\mbox{ and }\quad S''_i = S_j 
\cap \partial(j),\]  
\item[(d)]  $\bar\Lambda_i = \langle \Lambda_{i,\partial}:\partial\in
  S_i\rangle$, where  
\begin{enumerate}
\item[$(\alpha)$] $\Lambda_{i,\partial}$ is $\Lambda_{j,\partial}$ if
  $\partial \in S''_i$, and 
\item[$(\beta)$] $\Lambda_{i,\partial}$ is
  $\bigcup\{\Lambda_{j,\rho, \partial}: \rho 
  \in p_i \cap {}^{\partial(j)}2$ and $\partial\in S_{r_{j,\rho}}\}$ if
  $\partial \in S'_i$,  
\item[$(\gamma)$] $\Lambda_{i,\partial(j)}$ is $\{\cI\}$, 
\end{enumerate}
\item[(e)] $E_i$ is $E\setminus (\beta(i)+1)$ or just a club of $\kappa$
  which is $\subseteq E_j\backslash\beta(i)$ and is disjoint to
  $S_{r_{j,\rho}}$ for every $\rho \in \cX$.    
\end{enumerate}
It should be clear that the objects defined above have the desired
properties. 

So we can carry out the induction on $i < \kappa$.  After it is completed we
define 
\begin{enumerate}
\item[$(*)_1$] $q = \bigcap\{p_i:i < \kappa\}$,
\item[$(*)_2$] $S = \bigcup\{S_i:i < \kappa\}$,
\item[$(*)_3$] $\bar\Lambda = \langle \Lambda_\partial:\partial \in
  S\rangle$ where $\Lambda_\partial = \bigcup\{\Lambda_{i,\partial}:i <
  \kappa$ satisfies $\partial \in S_i\}$ and
\item[$(*)_3$]  $E = \{\delta < \kappa:\delta = \beta(\delta)\mbox{ is a limit ordinal such
that }i < \delta \Rightarrow \delta \in E_i\}$.
\end{enumerate}
It easily follows from conditions (i)--(vi) that:
\begin{enumerate}
\item[$(\oplus)_1$]  $q \in \bbQ_\kappa$ has trunk $\varrho_0$,
\item[$(\oplus)_2$]  $(\varrho_0,S,\bar\Lambda,E)$ is a full witness for $q
  \in \bbQ_\delta$, 
\item[$(\oplus)_3$] $p \le_{\bbQ_\kappa} q$ and $p\cap {}^{\beta\geq} 2
  =q\cap {}^{\beta\geq} 2$,  
\item[$(\oplus)_4$] if $\nu \in q\cap {}^{\beta(j+1)}2$, then
  $q^{[\nu]}$ forces a value to $\name f(j)$. 
\end{enumerate}
\medskip

1)\quad Follows from (2) proven above: $(\oplus)_4$, that is the last bullet 
in \ref{n13}(2),  suffices for defining a function $g \in \bold V$ such that
$q$ forces that it bounds $\name f$, we are done. 
\end{PROOF}

\begin{conclusion}
\label{a27}
\begin{enumerate}
\item If $\kappa$ is a weakly compact cardinal \then \, there is a
  $({<} \kappa)$--strategically complete, $\kappa^+$-c.c.,
  $\kappa$--bounding forcing notion (hence not adding a $\kappa$-Cohen), and
  of course, adding a new $\eta \in {}^\kappa 2$.
\item In fact, the forcing is $\kappa$--Borel and is
  $\kappa$--strategically complete and it is equivalent to a $({<}
  \kappa)$--complete forcing notion (which necessarily is
  $\kappa^+$--c.c. $\kappa$--bounding adding a new subset to $\kappa$).
  Also, the forcing is definable even without parameters.
\end{enumerate}
\end{conclusion}

\begin{PROOF}{\ref{a27}}
(1)\quad See above.
\medskip

\noindent
(2)\quad Note that when $\kappa$ is not weakly compact, $\bbQ_\kappa$ is not
$\kappa$--Borel because ``nowhere stationary'' is not.  However, if we
replace the conditions by full witnesses of conditions with the natural
order, this becomes easy.  
\end{PROOF}
\bigskip

\subsection {Adding a dominating member of 
$\prod\limits_{\varepsilon < \lambda} \theta_\varepsilon$}
\label{Addinga} 

Here we present a variant of the forcing from \S(1A), this time
dealing with sequences from $\prod\limits_{\varepsilon < \lambda}
\theta_\varepsilon$ instead of ${}^\lambda 2$ and we have an
$|\varepsilon|^+$-complete filter $D_\varepsilon$ on
$\theta_\varepsilon$ for $\varepsilon < \lambda$.  The main case is
$D_\varepsilon=\{a\subseteq \theta_\varepsilon: |\theta_\varepsilon
\setminus a|<\theta_\varepsilon\}$, so we write only this case, but the
changes needed for the general case are minor. This is also true for
$\langle \theta_\eta,D_\eta:\eta\in\cT\rangle$ and $\cT=\{\nu: \varepsilon <
{\ell g}(\nu)\Rightarrow \nu(\varepsilon)<\theta_{\nu\rest \varepsilon}\}$. 
So our starting point, e.g. the forcing for the first $\kappa$, is not the  
$\kappa$-Cohen forcing but $\bbQ_\theta$ of \cite{Sh:945}, which is
the parallel for $\kappa$ of the forcing of \cite{Sh:326} for
$\lambda=\aleph_0$. 

Note that Definitions \ref{a3}, \ref{a4} are used in \cite{Sh:F1580}, too. Also
note that $\bbQ_{\bar\theta}$ is the ``one step" forcing on which we shall
build later.  

The reader may ignore the version with $\bar{\cP}$, i.e., use the default
$\cP_\kappa=\cP(\cH(\kappa))$. 

\begin{remark}
  For $\bar\theta = \langle \theta_\alpha:\alpha <
  \kappa\rangle$, $\bbQ_{\bar\theta}=\bbQ^1_\kappa$ was designed to
  make the old $\kappa$-reals $\kappa$--meagre, we still have to
  expect it to behave like random real forcing and do this indeed.
\end{remark}

\begin{definition}
\label{a4}
1) Recall the weakly compact ideal on $\lambda$ is $I^{\wc}_\lambda =
\{A \subseteq \lambda$: for some first order formula $\varphi(X,Y)$
and $B \subseteq \cH(\lambda)$ we have $(\forall X \subseteq
\cH(\lambda))(\cH(\lambda) \models \varphi[X,B])$ but for no strongly
inaccessible $\kappa \in A$ do we have $(\forall X \subseteq
\cH(\kappa))(\cH(\kappa) \models \varphi[X,B \cap \cH(\kappa)]\}$.

\noindent
2) $\diamondsuit_{S_*,I^{\wc}_\lambda}$ means that some $\bar A =
\langle A_\alpha:\alpha \in S_*\rangle$ is an
$I^{\wc}_\lambda$-diamond sequence, which means: for every $A
\subseteq \cH(\lambda)$ the set $\{\kappa \in S_*: A \cap \cH(\kappa)
= A_\kappa\}$ is $\ne \emptyset \mod I^{\wc}_\lambda$.

\noindent
3) We say $\bar{\cP} = \langle \cP_\alpha:\alpha \in S_*\rangle$ is
$I^{\wc}_\lambda$-positive \when \, $S_* \in (I^{\wc}_\lambda)^+$ and
$(\cP_\alpha,\alpha,\in)$ and $(\cP(\alpha),\alpha,\in)$ have the same first 
order theory,  and 
moreover $(a) \Rightarrow (b)$ where \mn
\begin{enumerate}
\item[(a)]  $\varphi(X,Y)$ is first order,
$A \subseteq \cH(\lambda)$ satisfies $X \subseteq
   \cH(\lambda) \Rightarrow (\cH(\lambda),\in) \models \varphi[X,A]$,  
\item[(b)]  $(\exists^{I^{\wc}_\lambda}\kappa \in S_*)
[A \cap \cH(\kappa) \in \cP_\kappa$ and $X \subseteq \cH(\kappa)
   \Rightarrow (\cH(\kappa),\in) \models \varphi[X,A \cap \kappa]]$. 
\end{enumerate}

\noindent
4) The default value of $\bar{\cP}$ is $\langle\cP(\cH(\kappa)):\kappa\in
S_\gx\rangle$.  
\end{definition}

\begin{definition}
\label{a3}
1) We say $\gx$ is a 1-ip when $\gx$ consists of:
\begin{enumerate}
\item[(A)]  a weakly compact cardinal $\lambda$,
\item[(B)]   a sequence $\bar\theta = \langle \theta_\varepsilon:
  \varepsilon < \lambda\rangle$, where 
\[\varepsilon < \lambda\quad\Rightarrow\quad  (2 \le
\theta_\varepsilon < \aleph_0) \vee (\varepsilon < \theta_\varepsilon
= \cf(\theta_\varepsilon) < \lambda),\] 
\item[(C)]  a stationary set $S_{\gx} \subseteq \lambda$ of strongly
  inaccessible cardinals satisfying 
\[\zeta < \kappa \in S_{\gx}\quad \Rightarrow\quad
  \prod\limits_{\varepsilon < \zeta} \theta_\varepsilon < \kappa,\]  
\item[(D)]  
\begin{enumerate}
\item[(a)] $\diamondsuit_{S_{\gx},I^{\wc}_\lambda}$, i.e. diamond on $S_{\gx}$ 
  holds even modulo the weakly compact ideal, or just  
\item[(b)] $\bar{\cP} = \langle \cP_\kappa \subseteq \cP(\cH(\kappa)):
  \kappa \in S_{\gx}\rangle$ is $I^{\wc}_\lambda$--positive, see
  Definition \ref{a4}(3)  above, so necessarily $S_{\gx} \in 
  (I^{\wc}_\lambda)^+$; the default value is $\cP_\kappa =
  \cP(\cH(\kappa))$,
\end{enumerate} 
\item[(E)]  $S^*_{\gx} := \{\kappa \le \lambda:\kappa$ weakly
compact and $S_{\gx}\cap \kappa \in (I^{\wc}_\kappa)^+$ moreover the
sequence $\bar{\cP} \rest (S_{\gx}\cap \kappa)$ is
$I^{\wc}_\kappa$--positive (see \ref{a4}(3)) $\}$. 
\end{enumerate}
\noindent 2) If $\kappa \in S^*_{\gx}$ we may say ``$\kappa$ is
$\gx$-weakly compact". 

\noindent 3) Let $\bar\theta = \langle \theta_\varepsilon: \varepsilon
<\lambda\rangle$ be as in clause 1(B) (we will fix it for this
sub-section). Define $\bold T_\alpha = \prod\limits_{\varepsilon <
  \alpha} \theta_\varepsilon$ for $\alpha < \lambda$ and $\bold T_{<
  \alpha} = \bigcup\{\bold T_\beta:\beta < \alpha\}$ for $\alpha \le 
\lambda$.
\end{definition}

\begin{convention}
\label{a7}
For this subsection

\noindent
0) $\gx$ is as in Definition \ref{a3}.

\noindent
1) Let $\kappa,\partial$ denote members of $S_{\gx}$.

\noindent
2) Always $p$ is a subtree of $\bold T_{<\kappa}$, for some $\kappa
\le \lambda$, typically it belongs to $\bbQ^1_\kappa$ for some $\kappa
\le \lambda$ and for $\eta \in p$ let $p^{[\eta]} = \{\nu \in p:\nu
\trianglelefteq \eta$ or $\eta \trianglelefteq \nu\}$.
\end{convention}

\begin{definition}
\label{a8}
We define the forcing notion $\bbQ^1_\kappa$ by induction on $\kappa$
(so $\kappa \in S_{\gx}$) as follows: 
\begin{enumerate}
\item[(A)]  $p \in \bbQ^1_\kappa$ \Iff \, some $S \subseteq \kappa
\cap S_{\gx}$ witnesses it, which means
\begin{enumerate}
\item[(a)]  $p$ is a subtree of $\bold T_{< \kappa}$,
\item[(b)]  $p$ has trunk $\tr(p) \in \bold T_{< \kappa}$ that is
\begin{itemize}
\item  $\beta \le \ell g(\tr(p)) \Rightarrow p \cap \bold T_\beta =
  \{\tr(p) \rest \beta\}$ but 
\item  $(\exists^{\ge 2} \alpha)(\tr(p) \char 94 \langle\alpha \rangle
  \in p)$, 
\end{itemize}
\item[(c)]  if $\eta \in p \wedge \ell g(\tr(p)) \le \ell g(\eta) <
  \beta < \kappa$ then $(\exists \nu)(\eta \triangleleft \nu \in p
  \cap \bold T_\beta)$, follows from the rest,
\item[(d)]  if $\eta \in p$ and $\ell g(\tr(p))\le\ell g(\eta) <
  \kappa$ then\footnote{Remember ``$\forall^\infty i<\theta$'' means ``for
    all but boundedly many $i<\theta$''.}   
\begin{itemize}
\item if $\theta_{\ell g(\eta)} \ge \aleph_0$ \then \,
  $(\forall^\infty i < \theta_{\ell g(\eta)})[\eta \char 94 \langle i
  \rangle \in p]$,
\item if $\theta_{\ell g(\eta)} < \aleph_0$ \then \, $(\forall i <
  \theta_{\ell g(\eta)})(\eta \char 94 \langle i \rangle \in p)$,
\end{itemize}
\item[(e)]  if $\delta \in \kappa \backslash S$ is a limit ordinal
  and $\eta \in \bold T_\delta := \prod\limits_{\varepsilon < \delta}
  \theta_\varepsilon$,

then $\eta \in p \Leftrightarrow (\forall \beta < \delta)(\eta \rest
\beta \in p)$, 
\item[(f)]  if $\partial \in \kappa \cap S$ hence $\partial \in S_{\gx}$  
  so is strongly inaccessible, \then \, $p \cap \bold T_{<\partial}
  \in \bbQ^1_\partial$ and for some predense  subsets $\cI_i$ of
  $\bbQ^1_\partial$ for $i < i_* \le \partial$,  [if we have
  $\bar{\cP}$ also $\cI_i \in \cP_\kappa$]  for every $\eta\in \bold
  T_\partial$ we have:  
\begin{itemize}
\item  $\eta \in p$ iff $(\forall \beta <
\partial)(\eta \rest \beta \in p)$ and $(\forall i < i_*)(\exists q
\in \cI_i)(\forall \beta < \partial) (\eta \rest \beta \in q)$, 
\end{itemize}
\item[(g)] $S \subseteq \kappa \cap S_{\gx}$ is not stationary in any
inaccessible $\partial \le \kappa$, even if $\partial \notin S_{\gx}$ (yes
also for $\partial = \kappa$), equivalently for any limit $\delta \le
\kappa$ as $S_{\gx}$ is a set of inaccessibles and $S \subseteq S_{\gx}$. 
\end{enumerate}
\item[(B)]  $\le_{\bbQ^1_\kappa}$ is the inverse inclusion.
\end{enumerate}
\end{definition}

\begin{claim}
\label{a11}
1) $\bold T_{< \kappa}$ belongs to $\bbQ^1_\kappa$ and 
\begin{itemize}
\item $p \in \bbQ^1_\kappa\quad \Rightarrow\quad \bbQ^1_\kappa \models
  ``\bold T_{< \kappa} \le  p$'', and 
\item $\eta \in p \in \bbQ^1_\kappa\quad \Rightarrow\quad p
  \le_{\bbQ^1_\kappa} p^{[\eta]} \in \bbQ^1_\kappa$.
\end{itemize}

\noindent 2) For $p \in \bbQ^1_\kappa$ and $\alpha < \kappa$ the set
$\{p^{[\eta]}:\eta    \in p \cap \bold T_\alpha\}$ is predense in
$\bbQ^1_\kappa$ above $p$. 

\noindent 3) If $p \in \bbQ^1_\kappa$ and $\ell g(\tr(p)) < \partial <
\kappa$ \then \, $p \cap \bold T_{<\partial} \in \bbQ^1_\partial$. Moreover,
if $p_\ell\in\bbQ_\kappa^1$, $\ell g(\tr(p_\ell))<\partial<\kappa$ for
$\ell=1,2$, then  
\[p_1\leq_{\bbQ_\kappa^1} p_2\quad \Rightarrow\quad p_1\cap {\bold 
  T}_{<\partial} \leq _{\bbQ^1_\partial} p_2\cap {\bold 
  T}_{<\partial},\]
and 
\[p_1\perp_{\bbQ_\kappa^1} p_2\quad \Rightarrow\quad p_1\cap {\bold 
  T}_{<\partial} \perp_{\bbQ^1_\partial} p_2\cap {\bold 
  T}_{<\partial}.\]
\noindent
4) $\bbQ^1_\kappa$ is a forcing notion and it satisfies the
$\kappa^+$-c.c. Moreover, it is $\kappa^+$-centered as if $p,q \in
\bbQ^1_\kappa$ have the same trunk \then \, $p,q$ are compatible, in
fact, $p \cap q$ belongs to $\bbQ^1_\kappa$ and is a
$\le_{\bbQ^1_\kappa}$-lub with the same trunk.

\noindent
5) Suppose that $\nu \in \bold T_\gamma$ and $p_i \in\bbQ^1_\kappa$,
$\tr(p_i)=\nu$ for $i < i(*)$  and assume that 
\begin{enumerate}
\item[$(\boxdot)$] either $i(*) \le \gamma$, or   
\[(\forall \varepsilon)[\ell g(\nu) \le \varepsilon 
< \kappa \wedge \theta_\varepsilon\geq\aleph_0 
\Rightarrow i(*) < \theta_\varepsilon]\quad\mbox{ and }\quad 
i(*) < \min(S_{\gx} \backslash (\ell g(\nu) +1)).\] 
\end{enumerate}
Then $p = \bigcap\{p_i:i <i(*)\}$ belongs to $\bbQ^1_\kappa$, has the
trunk $\nu$ and is a $\le_{\bbQ^1_\kappa}$--lub of $\{p_i:i < i(*)\}$.  

\noindent 6) $p,q \in \bbQ^1_\kappa$ are incompatible \Iff \, $\tr(p)
\notin q \vee \tr(q) \notin p$.

\noindent
7) If $\nu \in {\bold T}_\gamma$, $p_i \in \bbQ^1_\kappa$, and
${\rm tr}(p_i)\trianglelefteq \nu \in p_i$ for $i<i(*)$ and $(\boxdot)$ of
part (5) holds, \then\, $p = \bigcap\{p^{[\nu]}_i:i < i(*)\}$ is a lub of
$\{p_i^{[\nu]}:i<i(*)\}$ in $\bbQ^1_\kappa$ and has trunk $\nu$.

\noindent
8) $\name\eta = \bigcup\{\tr(p):p \in \name{\bold G}_{\bbQ^1_\kappa}\}$ is a
$\bbQ^1_\kappa$-name of a member of
$\prod\limits_{\varepsilon <\kappa} \theta_\varepsilon$. 

\noindent
9) If $\nu \in \prod\limits_{\varepsilon < \kappa} \theta_\varepsilon$
\then \, $\Vdash_{\bbQ^1_\kappa}$ ``for arbitrarily large $\varepsilon
< \kappa$ we have $\name\eta(\varepsilon) \ne \nu(\varepsilon)$ and
for every $\varepsilon < \kappa$ large enough $\theta_\varepsilon \ge
\aleph_0 \Rightarrow \name\eta(\varepsilon) > \nu(\varepsilon)"$.

\noindent
10) $\name\eta$ is a new branch of $\bold T_{< \kappa}$ and is generic
for $\bbQ^1_\kappa$, i.e. $\name{\bold G} = \{p \in
\bbQ^1_\kappa:\name\eta$ is a branch of $p\}$.

\noindent
11) $\bbQ^1_\kappa$ is $(< \kappa)$-strategically complete. 
\end{claim}

\begin{PROOF}{\ref{a11}}
1), 2), 3)  Straightforward (for the second sentence of (3) use part (6)).

Concerning parts (4), (5) and (6), see more in \ref{a15} and \ref{a17}.

\noindent
4) By (7) and the number of possible trunks of $p \in \bbQ^1_\kappa$
is $|\bold T_{< \kappa}| = \kappa$.

\noindent
5) By (7).

\noindent
6) Clearly if $\tr(p) \notin q$ then $p,q$ are incompatible, and
similarly if $q \notin \tr(p)$ so the implication ``if" holds.  For
the other direction assume $\tr(p) \in q \wedge \tr(q) \in p$, and we
shall prove that $p,q$ are compatible.  By symmetry \wilog \, $\ell
g(\tr(p)) \le \ell g(\tr(q))$, let $\nu = \tr(q)$. Now $p^{[\nu]}$ and
$q =q^{[\nu]}$ have the same trunk, so we are done by part (4).

\noindent
7) Let $S_i$ be a witness for $p_i \in \bbQ^1_\kappa$, and let $S =
\bigcup\{S_i:i < i(*)\} \backslash (\ell g(\nu)+1)$. We shall prove
that $S$ witnesses that $p = \bigcap\{p_i^{[\nu]}:i <i(*)\}$ belongs
to $\bbQ^1_\kappa$, then we are done as obviously $i < i(*)
\Rightarrow p \subseteq p_i^{[\nu]}$ by the choice of $p$. 

If $\partial \le \ell g(\nu)$ then $\partial \cap S = \emptyset$ and
if $\ell g(\nu) < \partial < \kappa$, \then \, each $S_i
\cap \partial$ is not a stationary subset of $\partial$ for $i<i(*)$.
Also $i(*) <\partial$.

\noindent [Why?  If $i(*) \le \ell g(\nu)$ clear, if $i(*) > \ell
g(\nu)$, \then \, $S\cap [\ell g(\nu),i(*)] = \emptyset$ by
assumption as $\partial > \ell g(\nu)$ clearly $i(*) < \partial$.]
Together also $S = \bigcup\{S_i:i<i(*)\}$ is not stationary in
$\partial$; that is, clause (g) of \ref{a8}(A) holds.

Now obviously $p$ is a subtree of $\bold T_{< \kappa}$, i.e. (a) of
\ref{a8}(A) holds.  Also obviously $\alpha \le \ell g(\nu) \Rightarrow
p \cap \bold T_\alpha = \{\nu \rest \alpha\}$ and $p \cap \bold
T_{\ell g(\nu) +1} \subseteq \{\nu \char 94 \langle \iota
\rangle:\iota < \theta_{\ell g(\nu)}\}$.  To prove clauses (b), (d)
assume that $\eta \in p \cap \bold T_\varepsilon$ and $\nu
\trianglelefteq \eta$.  If $\theta_\varepsilon < \aleph_0$ then
clearly $n < \theta_\varepsilon \wedge i < i(*) \Rightarrow \eta \char
94 \langle n \rangle \in p_i$ hence $\{\eta \char 94 \langle \iota
\rangle:\iota < \theta_\varepsilon\} \subseteq p \cap \bold T_{\ell
  g(\eta)+1}$ so equality holds.  Hence clause (d) holds in this case,
and for $\varepsilon = \ell g(\nu)$ so $\eta = \nu$ then $\nu$ is
indeed the trunk of $p$ and \ref{a8}(A)(b) holds.

If $\theta_\varepsilon \ge \aleph_0$ then $\theta_{\ell g(\eta)} =
\cf(\theta_{\ell g(\eta)}) >i(*)$. Now, for each $i<i(*)$
there is $\iota(i) < \theta_\varepsilon$ such that $\{\eta \char 94 \langle
\iota):\iota \in [\iota(i),\theta_\varepsilon)\} \subseteq p_i$ and hence 
$\iota(*) = \sup\{\iota(i):i<i(*)\} < \theta_\varepsilon$. Thus $\{\eta
\char 94 \langle \iota \rangle:\iota \in [\iota(*), \theta_\varepsilon)\}
\subseteq p$ and again clause (d) holds in this case, and for $\varepsilon =
\ell g(\nu)$ so $\eta = \nu$, clearly $\tr(p)$ is well defined and equal to
$\nu$, so \ref{a8}(b) holds. 

The proof of clause \ref{a8}(A)(c) follows from the rest.

The proofs of clauses (e), (f) are straightforward and clause (g)
holds by the choice of $S$.

\noindent 8)--11)  Left to the reader. 
\end{PROOF}

\begin{observation}
\label{a12}
If $p \le_{\bbQ^1_\kappa} q$ and $S$ is a witness for $q$ and $\tr(p)
   = \tr(q)$ \then \, $S$ is a witness for $p$.
\end{observation}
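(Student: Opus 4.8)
The plan is a clause-by-clause verification against Definition \ref{a8}(A). Write $\varrho := \tr(q) = \tr(p)$, and recall that, by \ref{a8}(B), $p \le_{\bbQ_\kappa} q$ means $q \subseteq p$ as subtrees of $\bold T_{<\kappa}$. So the task is: assuming $S$ witnesses $q$, produce a sequence $\bar\Lambda = \langle \cI_{\partial,i} : \partial \in S,\ i < i_*(\partial)\rangle$ of predense sets so that $(\varrho,S,\bar\Lambda)$ is a witness for $p$ in the sense of \ref{a8}(A).

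First I would dispose of everything that does not involve the tree: $S \subseteq \kappa \cap S_*$ and the non-stationarity requirements of clause (g) depend only on $S$ and hold because $S$ already witnesses $q$. Clauses (a), (b) [trunk] and (d) [splitting above the trunk] concern only $p$ itself and hold since $p \in \bbQ_\kappa$ and $\tr(p) = \varrho$; clause (c) follows from the remaining clauses exactly as recorded in the proof of \ref{a11}. Moreover, any witness avoids ordinals $\le \ell g(\tr(q)) = \ell g(\varrho)$, so every $\partial \in S$ satisfies $\ell g(\varrho) < \partial < \kappa$, whence $p \cap \bold T_{<\partial} \in \bbQ_\partial$ by \ref{a11}(3) — this is the first half of clause (f).

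The core is clause (e) for $p$, i.e.\ that $p$ is continuous at every limit $\delta \notin S$; once that is known, in clause (f) one takes $i_*(\partial) = 0$ at those $\partial \in S$ where $p$ happens to be continuous, and at the others reuses the predense sets from any fixed witness for $p$. So everything reduces to: \emph{if $p$ fails to be continuous at a limit $\delta$, then $\delta \in S$.} Here is where $q \subseteq p$ and $\tr(q) = \tr(p)$ are used: a failure of continuity of $p$ at $\delta$ is witnessed by some $\eta \in \bold T_\delta$ with $\eta \rest \beta \in p$ for all $\beta < \delta$ but $\eta \notin p$; working inside $q \cap \bold T_{<\delta} \subseteq p \cap \bold T_{<\delta}$, which are both in $\bbQ_\delta$ and share the trunk $\varrho$, one locates a $\delta$-branch of $q \cap \bold T_{<\delta}$ that is not a member of $q$, so $q$ too fails continuity at $\delta$; since $S$ witnesses $q$, clause (e) for $q$ relative to $S$ now forces $\delta \in S$. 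I expect this last step — transferring a non-continuity point of $p$ down to $q$ — to be the only place needing genuine work; the remaining verifications (and, where relevant, that $p$ is then a $\le_{\bbQ_\kappa}$-upper bound of the conditions at hand) are the same routine bookkeeping used throughout \ref{a11}.
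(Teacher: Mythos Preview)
The paper provides no proof for this observation. Your reduction is correct: clauses (a)--(d) and (g) of Definition~\ref{a8}(A) concern $p$ alone or $S$ alone and transfer immediately; clause (f) at each $\partial \in S$ is handled by reusing the predense sets from any fixed witness $S_p$ for $p$ when $\partial \in S_p$, and by taking $i_* = 0$ when $\partial \notin S_p$ (since then $p$ is continuous at $\partial$ by clause (e) for $S_p$). The content is exactly clause (e), as you say.

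However, your approach to clause (e) has a real gap at precisely the step you flag. You want to deduce that a discontinuity of $p$ at a limit $\delta$ forces one of $q$ at $\delta$, and you propose to ``locate a $\delta$-branch of $q \cap \bold T_{<\delta}$ that is not a member of $q$''. But the sequence $\eta$ witnessing $p$'s discontinuity has its initial segments in $p$, the \emph{larger} tree; there is no reason they lie in $q$, so $\eta$ need not give a branch of $q \cap \bold T_{<\delta}$ at all. More to the point, the implication runs the wrong way intuitively: since $q \cap \bold T_{<\delta} \subseteq p \cap \bold T_{<\delta}$, every limit of the former is already a limit of the latter; it is entirely possible that $q$, having been thinned below $\delta$, avoids exactly those limits of $p \cap \bold T_{<\delta}$ that $p$ drops at level $\delta$, and is therefore continuous at $\delta$ while $p$ is not. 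Concretely, if $\delta \notin S$ then $q$ \emph{is} continuous at $\delta$, so no $\delta$-branch of $q \cap \bold T_{<\delta}$ is missing from $q$ --- the very thing you are trying to find cannot exist. So the step ``$p$ discontinuous at $\delta$ $\Rightarrow$ $q$ discontinuous at $\delta$'' is not the routine bookkeeping you suggest, and your sketch does not justify it.
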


\begin{definition}
\label{a15}
Let $\kappa \in S_{\gx}$.

\noindent
1) For $\gamma<\kappa$ let $\bold S^{\incr}_{\kappa,\gamma}$ be the set of
sequences $\langle (p_\alpha,q_\alpha,E_\alpha):\alpha < \gamma\rangle$ 
satisfying\footnote{may add: $(h) \quad$ if $\delta < \gamma$ is a
  limit ordinal then $p_\delta = \cap\{p_\alpha:\alpha < \delta\}$, we
  do not use this} \mn
\begin{enumerate}
\item[(a)]  $p_\alpha \in \bbQ^1_\kappa$,
\item[(b)]  $q_\alpha \in \bbQ^1_\kappa$.
\item[(c)]  $\beta < \alpha \Rightarrow q_\beta \le_{\bbQ^1_\kappa}
  p_\alpha$, 
\item[(d)]  $E_\alpha$ is a club of $\kappa$ disjoint to some witness
  for $q_\beta \in \bbQ^1_\kappa$ for every $\beta < \alpha$, 
\item[(e)]  $p_\alpha \le_{\bbQ^1_\kappa} q_\alpha$, 
\item[(f)]  $\ell g(\tr(p_\alpha)) \ge \alpha$, 
\item[(g)]  $\ell g(\tr(p_\alpha)) \in \bigcap\{E_\beta:\beta <
  \alpha\}$. 
\end{enumerate}

\noindent
2) For $\gamma\leq \kappa$ let $\bold S^{\incr}_{\kappa,< \gamma} =
\bigcup\{\bold S^{\incr}_{\kappa,\beta}: \beta < \gamma\}$ and $\bold  
S^{\incr}_\kappa = \bold S^{\incr}_{\kappa,< \kappa}$.

\noindent
3) For $\gamma \le \kappa$ let $\bold S^{\pr}_{\kappa,\gamma}$ be the
set of sequences $\langle (p_\alpha,q_\alpha,E_\alpha):\alpha <
   \gamma\rangle$ such that
\begin{enumerate}
\item[(a)]  $p_\alpha,q_\alpha \in \bbQ^1_\kappa$ have trunks $\tr(p_0)$, 
\item[(b)]  $E_\alpha$ is a club of $\kappa$ disjoint to $\ell g(
  \tr(p_0))$ such that for every $\beta <\alpha$, $E_\alpha$ is
  disjoint to some witness of $q_\beta\in \bbQ^1_\kappa$, 
\item[(c)]  $\min(E_\alpha) \ge \alpha$ is increasing (for
  transparency), 
\item[(d)]  $p_\alpha \le_{\bbQ^1_\kappa} q_\alpha$,
\item[(e)]  $q_\beta \le_{\bbQ^1_\kappa} p_\alpha$ when $\beta <
  \alpha$,
\item[(f)]  if $\beta < \alpha$ \then \, $q_\beta \cap \bold
T_{\min(E_\beta)} \subseteq p_\alpha$, 
\item[(g)]  if $\delta < \gamma$ is a limit ordinal \then \, 
\[p_\delta = \bigcap\big\{p_\alpha:\alpha < \delta\big\}\ \mbox{ and
}\ p_\delta \cap \bold T_{\min(\cap\{E_\alpha:\alpha < \delta\})}
\subseteq q_\beta \mbox{ for }\beta \in [\delta,\gamma).\]
\end{enumerate}
4) $\bold S^{\pr}_{\kappa,< \gamma} = \bigcup\{\bold
S^{\pr}_{\kappa,\beta}:\beta <\gamma\}$ and $\bold S^{\pr}_\kappa =
\bigcup\{\bold S^{\pr}_{\kappa,\gamma}:\gamma <\kappa\}$.
\end{definition}

\begin{claim}
\label{a17}
1) For every $p \in \bbQ^1_\kappa$ the sequence $\langle
(p,p,\kappa)\rangle$ belongs to $\bold S^{\incr}_\kappa$.

\noindent
2) $\bold S^{\incr}_\kappa$ is closed under unions of
$\vartriangleleft$--increasing chains of length $< \kappa$.

\noindent
3) If $\bar{\bold x} = \langle (p_\alpha,q_\alpha,E_\alpha):\alpha <
\beta\rangle \in \bold S^{\incr}_\kappa$ \then \, for some $p_\beta$ we
have: $\alpha < \beta \Rightarrow q_\alpha \le p_\beta$ and if $p_\beta \le
q_\beta$ and $E_\beta$ is a club of $\kappa$ disjoint to some witness of
$q_\beta$ or just of $p_\beta$ or just of $q_\gamma$ for every $\gamma <
\beta$ \then \, $\bar{\bold x} \char 94 \langle
(p_\beta,q_\beta,E_\beta)\rangle \in \bold S^{\incr}_\kappa$. 
\end{claim}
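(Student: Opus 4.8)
The plan is to establish the three parts in order, with essentially all the work in Part~3. Part~1 is a direct verification of clauses (a)--(g) of Definition~\ref{a15}(1) for the singleton $\langle(p,p,\kappa)\rangle$: clauses (a),(b),(e) are trivial, (c),(d) are vacuous, (f) holds since $\ell g(\tr(p))\ge 0$, and (g) holds since $\ell g(\tr(p))<\kappa=\bigcap\{E_\beta:\beta<0\}$. For Part~2, let $\langle\bar{\bold x}_i:i<\delta\rangle$ be a $\triangleleft$-increasing chain in $\bold S^{\inc}_\kappa$ with $\delta<\kappa$ and $\bar{\bold x}=\bigcup_{i<\delta}\bar{\bold x}_i$; since $\kappa$ is regular and each $\lh(\bar{\bold x}_i)<\kappa$ we get $\lh(\bar{\bold x})<\kappa$, and each of (a)--(g) at a position $\alpha$ mentions only the entry at $\alpha$ together with entries at ordinals $<\alpha$, all of which already appear in some $\bar{\bold x}_i$; so the clauses are inherited and $\bar{\bold x}\in\bold S^{\inc}_{\kappa,<\kappa}=\bold S^{\inc}_\kappa$.

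For Part~3, given $\bar{\bold x}=\langle(p_\alpha,q_\alpha,E_\alpha):\alpha<\beta\rangle\in\bold S^{\inc}_\kappa$, it suffices to produce $p_\beta\in\bbQ_\kappa$ that is a $\le_{\bbQ_\kappa}$-upper bound of $\{q_\alpha:\alpha<\beta\}$ with $\ell g(\tr(p_\beta))\ge\beta$ and $\ell g(\tr(p_\beta))\in E:=\bigcap_{\alpha<\beta}E_\alpha$; note $E$ is a club, being an intersection of $|\beta|<\kappa$ clubs of the regular $\kappa$. Indeed, once such $p_\beta$ is found, appending any INC response $(p_\beta,q_\beta,E_\beta)$ allowed by Definition~\ref{a15}(1) keeps the sequence in $\bold S^{\inc}_\kappa$, since (a),(c),(f),(g) come from $p_\beta$ while (b),(d),(e) are exactly the legality demands on $(q_\beta,E_\beta)$. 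From \ref{a15}(1)(c),(e) the $q_\alpha$ are $\le_{\bbQ_\kappa}$-increasing, and $\tr(p_\alpha)\trianglelefteq\tr(q_\alpha)\trianglelefteq\tr(p_{\alpha+1})$ (a subtree of a condition cannot split strictly below that condition's trunk, so the trunk-length only grows along $q_\alpha\le p_{\alpha+1}$). If $\beta=\gamma+1$ is a successor, pick $\zeta\in E$ with $\zeta>\max\{\gamma,\ell g(\tr(q_\gamma))\}$, take $\eta\in q_\gamma\cap\bold T_\zeta$ (nonempty by \ref{a8}(A)(c)), and set $p_\beta:=q_\gamma^{[\eta]}$; by \ref{a11}(1) this lies in $\bbQ_\kappa$ with trunk $\eta$, and $\ell g(\tr(p_\beta))=\zeta\ge\beta$ is in $E$, while $p_\beta\ge q_\gamma\ge q_\alpha$ for all $\alpha<\beta$.

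The limit case is the crux. Set $\zeta^*:=\sup_{\alpha<\beta}\ell g(\tr(q_\alpha))=\sup_{\alpha<\beta}\ell g(\tr(p_\alpha))$, which is $\ge\sup_{\alpha<\beta}\alpha=\beta$ by \ref{a15}(1)(f), and $\eta^*:=\bigcup_{\alpha<\beta}\tr(q_\alpha)\in\bold T_{\zeta^*}$. The decisive bookkeeping point is that $\zeta^*\in E_\gamma$ for every $\gamma<\beta$: the ordinals $\ell g(\tr(p_\alpha))$ with $\gamma<\alpha<\beta$ all lie in $E_\gamma$ by \ref{a15}(1)(g) and are cofinal in $\zeta^*$, so $\zeta^*\in E_\gamma$ by closedness; in particular $\zeta^*\in E$. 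Hence $\zeta^*\notin S[q_\delta]$ for every $\delta<\beta$: picking $\gamma$ with $\delta<\gamma<\beta$, by \ref{a15}(1)(d) the club $E_\gamma$ is disjoint from some witness of $q_\delta$, and the minimal witness $S[q_\delta]$ (contained in every witness) then misses $\zeta^*$ too. Next, $\eta^*\in q_\alpha$ for every $\alpha<\beta$: if $\ell g(\eta^*)$ equals some $\ell g(\tr(q_{\alpha_0}))$ then $\eta^*=\tr(q_{\alpha_0})$, which lies in every $q_\alpha$; otherwise $\zeta^*$ is a limit ordinal above each $\ell g(\tr(q_\alpha))$, and since $\eta^*\rest\gamma\in q_\alpha$ for all $\gamma<\zeta^*$ while $\zeta^*\notin S[q_\alpha]$, the continuity clause \ref{a8}(A)(e), applied with the witness $S[q_\alpha]$, gives $\eta^*\in q_\alpha$. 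Finally put $p_\beta:=\bigcap_{\alpha<\beta}q_\alpha^{[\eta^*]}$: each $q_\alpha^{[\eta^*]}$ has trunk $\eta^*$ and $q_\alpha\le q_\alpha^{[\eta^*]}$, and since the index set has size $|\beta|\le\zeta^*=\ell g(\eta^*)$, Claim~\ref{a11}(5) (its first case) gives $p_\beta\in\bbQ_\kappa$ with trunk $\eta^*$ and $p_\beta$ above every $q_\alpha^{[\eta^*]}$, hence above every $q_\alpha$; as $\ell g(\tr(p_\beta))=\zeta^*\ge\beta$ lies in $E$, all required clauses hold. The single real obstacle is exactly this limit step: one must see that clauses (d) and (g) of Definition~\ref{a15}(1) force the glueing length $\zeta^*$ into $\bigcap_{\gamma<\beta}E_\gamma$ and out of every $S[q_\alpha]$, which is precisely what makes the continuity clause and \ref{a11}(5) applicable.
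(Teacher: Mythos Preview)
Your proof is correct and follows essentially the same approach as the paper's own argument: for the limit case of Part~3 you take $\eta^*=\bigcup_{\alpha<\beta}\tr(q_\alpha)$, use clauses (d) and (g) of Definition~\ref{a15}(1) to place $\zeta^*=\ell g(\eta^*)$ inside every $E_\gamma$ and outside the relevant witnesses, deduce $\eta^*\in q_\alpha$ by continuity, and then intersect the $q_\alpha^{[\eta^*]}$ via Claim~\ref{a11}(5). Your write-up is in fact more detailed than the paper's (you spell out the successor case and the case split on whether $\zeta^*$ is attained), but the skeleton is identical.
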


\begin{PROOF}{\ref{a17}}
  1) For $\gamma =1$ we have $\langle (p,p,\kappa)\rangle \in \bold
  S^{\incr}_{\kappa,\gamma}$ (note that clause (d) of Definition
  \ref{a15}(1) is trivially satisfied) and $\bold
  S^{\incr}_{\kappa,\gamma} \subseteq \bold S^{\incr}_\kappa$.
\medskip

\noindent
2) Obvious.
\medskip

\noindent
3) If $\beta$ is a successor ordinal this is easier, so we assume
$\beta$ is a limit ordinal.  Let $\nu_\alpha = \tr(q_\alpha)$ for
$\alpha < \beta$ hence $\langle \nu_\alpha:\alpha < \beta\rangle$ is a
$\trianglelefteq$--increasing sequence of members of $\bold T_{<
  \kappa}$ and $\ell g(\nu_\alpha) \ge \alpha$.  Hence $\nu_\beta :=
\cup\{\nu_\alpha < \beta\} \in \bold T_{\le \kappa}$ has length $\ge
\beta$.  As $\beta < \kappa$ and $\kappa$ is regular, necessarily
$\ell g(\nu_\beta) < \kappa$ so $\nu_\beta \in \bold T_{< \kappa}$.
Also recall $\alpha_1 < \alpha_2 < \beta \Rightarrow \ell
g(\nu_{\alpha_2}) \in E_{\alpha_1}$, but $E_{\alpha_1}$ is a club of
$\kappa$ hence $\alpha_1 < \beta \Rightarrow \ell g(\nu_\beta) \in
E_{\alpha_1}$.  As $\alpha_1+1 < \alpha_2 < \beta \Rightarrow
\nu_{\alpha_2} \in q_{\alpha_1}$ and $E_{\alpha_1+1}$ is disjoint to a
witness for $q_{\alpha_1}$ and by the previous sentence $\ell
g(\nu_\beta) \in E_{\alpha_1+1}$ we can deduce $\nu_\beta =
\bigcup\{\nu_{\alpha_2}:\alpha_2 \in (\alpha_1 +1,\beta)\} \in
q_{\alpha_1}$.  So clearly $\nu_\beta \in \bigcap\limits_{\alpha <
  \beta} q_\alpha$ hence $\langle q^{[\nu_\beta]}_\alpha:\alpha <
\beta\rangle$ is an increasing sequence of members of $\bbQ^1_\kappa$
with fixed trunk $\nu_\beta$ of length $\ge \beta$ as $\alpha < \beta
\Rightarrow \ell g(\nu_\beta) \ge \ell g(\nu_\alpha) = \ell
g(\tr(q_\alpha)) \ge \alpha$, see \ref{a15}(1)(f).  So by \ref{a11}(5)
we have $p_\beta := \bigcap\{q^{[\nu_\beta]}_\alpha:\alpha < \beta\}
\in \bbQ^1_\kappa$ has trunk $\nu_\beta$ and is equal to
$\big(\bigcap\{q_\alpha:\alpha < \beta\}\big)^{[\nu_\beta]}$.  Let $E_\beta
= \bigcap\{E_\alpha:\alpha < \beta\}$ and clearly $p_\beta,E_\beta$ are as
required.  
\end{PROOF}

\begin{claim}
\label{a19}
1) For every $p \in \bbQ^1_\kappa$ the sequence $\langle
(p,p,\kappa)\rangle$ belongs to $\bold S^{\pr}_\kappa$.

\noindent
2) If $\gamma < \kappa$ and $\bar{\bold x} = \langle
(p_\alpha,q_\alpha,E_\alpha):\alpha < \gamma\rangle \in \bold
S^{\pr}_{\kappa,\gamma}$ \then \, there are $(p_\gamma,E)$ with $E$ a
club of $\kappa$ and $p_\gamma = \bigcap\{p_\alpha:\alpha < \gamma\}$
such that:

if $p_\gamma \le q_\gamma,\beta < \gamma\ \Rightarrow\
q_\beta \cap \bold T_{\le \min(E_\gamma)} \subseteq q_\gamma$ and
$E_\gamma \subseteq E$ is a club of $\kappa$,

 then $\bar{\bold x} \char 94 \langle
 (p_\gamma,q_\gamma,E_\gamma)\rangle \in \bold S^{\pr}_\kappa$. 

\noindent
3) The union of a $\vartriangleleft$--increasing sequence of members
of $\bold S^{\pr}_\kappa$ of length $< \kappa$ belongs to $\bold
S^\pr_\kappa$. 

\noindent
3A) If $\langle \bar{\bold x}_\beta:\beta < \delta\rangle$ is
$\trianglelefteq$--increasing, $\bar{\bold x}_\beta = \langle
(p_\alpha,q_\alpha,E_\alpha):\alpha < \gamma_\beta\rangle \in \bold
S^{\pr}_\kappa$ and $\langle \gamma_\beta:\beta < \delta\rangle$ is
$\le$--increasing and $\gamma := \bigcup\{\gamma_\beta:\beta <
\delta\} < \kappa$ \then \, $\langle (p_\alpha,q_\alpha,E_\alpha):
\alpha < \gamma\rangle \in \bold S^{\pr}_{\kappa,\gamma}$.

\noindent
3B) If in (3A), $\gamma = \kappa$ \then \, $p_\kappa =
\bigcap\{p_\alpha:\alpha<\kappa\}$ belongs to $\bbQ^1_\kappa$ and is 
a $\le_{\bbQ^1_\kappa}$-lub of $\{p_\alpha,q_\alpha:\alpha <
\kappa\}$.
\end{claim}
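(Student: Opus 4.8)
The plan is to follow the pattern of the proof of Claim~\ref{a17}: parts (1), (3), (3A) are bookkeeping against Definitions~\ref{a8} and~\ref{a15}, part (2) is the one-step extension (trivial at successors, a bounded fusion at limits), and part (3B) is where the genuine $\kappa$-fusion is completed.

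For part (1) I would just verify clauses (a)--(g) of Definition~\ref{a15}(3) for the one-term sequence $\langle (p,p,\kappa)\rangle$: with $\gamma=1$ there are no pairs $\beta<\alpha$ and no limit $\delta$, so the second half of (b) and clauses (e), (f), (g) are vacuous, while (a), (c), (d) hold trivially (with $\kappa$ replaced by $\kappa\setminus(\ell g(\tr(p))+1)$ if one insists on literal disjointness in (b), exactly as in \ref{a17}(1)). For parts (3) and (3A): if $\gamma=\cup\{\gamma_\beta:\beta<\delta\}$, every index $<\gamma$ — in particular every limit $\delta'<\gamma$ — already lies in some $\gamma_\beta$, so every clause of Definition~\ref{a15}(3) for the union sequence is witnessed inside a single member $\bar{\bold x}_\beta$ of the chain; the only point needing a word is the tail ``$p_{\delta'}\cap\bold T_{\min(\cap\{E_\alpha:\alpha<\delta'\})}\subseteq q_\beta$ for $\beta\in[\delta',\gamma)$'' in clause (g), which one gets by chasing each such $\beta$ into the $\bar{\bold x}_{\beta'}$ that already contains the index $\beta$; part (3) is the special case where $\delta$ is the length of the chain.

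For part (2), if $\gamma$ is a successor the content is trivial: by clauses (d), (e) the $p_\alpha$ are $\le_{\bbQ_\kappa}$-increasing, so $\cap\{p_\alpha:\alpha<\gamma\}$ is simply the last $p_\alpha$, and one takes $E$ to be a club below which the last $q$ has been frozen (disjoint from a witness of it). If $\gamma$ is a limit, put $p_\gamma:=\cap\{p_\alpha:\alpha<\gamma\}$ and $E:=\cap\{E_\alpha:\alpha<\gamma\}$, a club since $\gamma<\kappa=\cf(\kappa)$. The key claim is $p_\gamma\in\bbQ_\kappa$ with trunk $\tr(p_0)$: by clause (f) of Definition~\ref{a15}(3), applied for all $\alpha\in(\beta,\gamma)$, the tree $p_\gamma$ contains $q_\beta\cap\bold T_{\min(E_\beta)}$ for every $\beta<\gamma$; since $q_0=p_0$ keeps $\ge 2$ immediate successors of $\tr(p_0)$ and $\min(E_0)>\ell g(\tr(p_0))$, $p_\gamma$ still branches there, so $\tr(p_\gamma)=\tr(p_0)$, and below each level $\min(E_\beta)$ the tree $p_\gamma$ coincides with the fixed condition $q_\beta$ (hence inherits being a subtree of $\bold T_{<\kappa}$ with the branching and continuity required in Definition~\ref{a8}(A)), while at and above $\sup_\beta\min(E_\beta)$ nothing new happens because all $p_\alpha$ already have trunk $\tr(p_0)$. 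A witness $S$ for $p_\gamma$ is then obtained as a suitable union of witnesses of the $q_\beta$; it is nowhere stationary because the club $E$ is disjoint from each of them (clause (b)). Finally one reads off the target clauses (a)--(g) for $\bar{\bold x}\frown\langle(p_\gamma,q_\gamma,E_\gamma)\rangle$ for any legal $q_\gamma\ge p_\gamma$ with $E_\gamma\subseteq E$, noting such a $q_\gamma$ automatically has trunk $\tr(p_0)$ and satisfies clause (f) against all earlier $q_\beta$.

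Part (3B) is the heart. With $\gamma=\kappa$, set $p_\kappa:=\cap\{p_\alpha:\alpha<\kappa\}$. Since $\le_{\bbQ_\kappa}$ is inverse inclusion, once $p_\kappa\in\bbQ_\kappa$ it is automatically a $\le_{\bbQ_\kappa}$-lub of $\{p_\alpha,q_\alpha:\alpha<\kappa\}$ (here $\cap p_\alpha=\cap q_\alpha$, as $q_\alpha\subseteq p_\alpha$ and $p_{\alpha+1}\subseteq q_\alpha$). So everything reduces to exhibiting a witness $S\subseteq S_*\cap\kappa$ for $p_\kappa$. By clause (f) of Definition~\ref{a15}(3) the tree $p_\kappa$ coincides, below each level $\min(E_\alpha)$, with the fixed condition $q_\alpha$, and $\langle\min(E_\alpha):\alpha<\kappa\rangle$ is increasing and cofinal in $\kappa$ (clause (c)); hence the branching clauses, the continuity clause at every limit $\delta\notin S$, and clause (f) of Definition~\ref{a8}(A) for $p_\kappa$ all reduce to the corresponding properties of the $q_\alpha$ (with the predense sets $\cI_i$ inherited from them), and the rest of Definition~\ref{a8}(A) is bookkeeping. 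I expect the \emph{main obstacle} to be exactly the requirement that $S$ be not stationary in any $\partial\le\kappa$: a union of $\kappa$ many individually nowhere-stationary witness-sets need not be non-stationary, so $S$ must be chosen carefully — its non-continuity levels below $\min(E_\alpha)$ taken only from a witness $S_\alpha$ of $q_\alpha$ — and its non-stationarity proved by a diagonal-intersection argument: the diagonal intersection $\triangle_{\alpha<\kappa}E_\alpha$, intersected with the club of limit points of $\langle\min(E_\alpha):\alpha<\kappa\rangle$, is disjoint from $S$, because a point $\xi$ of both is either a limit point of the $\min(E_\alpha)$ (where $p_\kappa$ is automatically continuous, so $\xi\notin S$) or satisfies $\xi<\min(E_\alpha)$ and $\alpha+1<\xi$ for some $\alpha$, forcing $\xi\in S_\alpha$ yet $\xi\in E_{\alpha+1}$, a contradiction. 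This is precisely the role the disjointness clauses (b), (c) of Definition~\ref{a15}(3) were built to play, and the argument runs parallel to the limit stage in the proof of Claim~\ref{a17}(3).
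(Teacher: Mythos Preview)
Your proposal is essentially correct and supplies exactly the kind of routine verification the paper omits (its entire proof is the single word ``Straight''). Parts (1), (3), (3A) are indeed pure bookkeeping against Definition~\ref{a15}(3), and your treatment of part (2) is fine.

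Two small points on part (3B). First, your case split at the end is garbled: once $\xi$ lies in the club of limit points of $\langle\min(E_\alpha):\alpha<\kappa\rangle$, it \emph{is} such a limit point, so your ``or'' case never arises; moreover in that second case you would need $\alpha_0+1<\xi$ with $\xi<\min(E_{\alpha_0})$, but $\xi\in\triangle_\alpha E_\alpha$ forces $\min(E_\alpha)\le\xi$ for all $\alpha<\xi$, so the case is vacuous. Second, and more substantively, your claim that $p_\kappa$ is ``automatically continuous'' at such a limit point $\xi$ needs clause~(g) of Definition~\ref{a15}(3), not just clause~(f): clause~(f) only tells you $p_\kappa$ agrees with $q_\alpha$ \emph{strictly below} $\min(E_\alpha)$, hence strictly below $\xi$; to handle level $\xi$ itself you need that $p_\xi\cap\bold T_\xi\subseteq q_\beta$ for all $\beta\ge\xi$, which is exactly the content of clause~(g) once you observe $\min\bigl(\bigcap_{\alpha<\xi}E_\alpha\bigr)=\xi$ for $\xi$ in your club. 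With that in hand, the continuity argument goes through: for $\alpha<\xi$ use $\xi\in E_{\alpha+1}$ disjoint from a witness of $q_\alpha$, and for $\alpha\ge\xi$ use $\eta\in p_\xi\cap\bold T_\xi\subseteq q_\alpha\subseteq p_\alpha$.
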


\begin{PROOF}{\ref{a19}}
Straightforward.
\end{PROOF}

\begin{cc}
\label{a21}
If $\kappa = \lambda$ or just $\kappa \in S^*_{\gx}$ (see \ref{a3}), $\gamma
< \kappa$, $\bar{\bold x} = \langle (p_\alpha,q_\alpha,E_\alpha):\alpha \le 
\gamma\rangle \in \bold S^{\pr}_{\kappa,\gamma +1}$ and $\name\tau$
is a $\bbQ^1_\kappa$-name of a member of $\bold V$ \then \, we can find
$(p_{\gamma +1},q_{\gamma +1},E_{\gamma +1})$ such that
\mn
\begin{enumerate}
\item[(a)]  $\bar{\bold x} \char 94 \langle (p_{\gamma +1},q_{\gamma
+1},E_{\gamma +1})\rangle \in \bold S^{\pr}_\kappa$,
\item[(b)]  if $\eta \in q_{\gamma +1} \cap \bold T_{\min(E_{\gamma
+1})}$ then $q^{[\eta]}_{\gamma +1}$ forces a value to $\name\tau$.
\end{enumerate}
\end{cc}

\begin{PROOF}{\ref{a21}}
Let
\begin{enumerate}
\item[$(*)_1$]   $\cY = \{\tr(p):p \in \bbQ^1_\kappa$ forces a value 
to $\name\tau$ and $\tr(p)$ has length $> \min(E_\gamma)\}$.
\end{enumerate}
For $\eta \in \cY$ let $p^*_\eta$ exemplify $\eta \in \cY$, i.e.
\begin{enumerate}
\item[$(*)_2$]  $\tr(p^*_\eta) = \eta$ and $p^*_\eta$ forces a value to
$\name\tau$, necessarily $\ell g(\eta) > \min(E_\gamma)$.
\end{enumerate}
Clearly 
\begin{enumerate}
\item[$(*)_3$]  
\begin{enumerate}
\item[(a)] $\cY \subseteq \bold T_{< \kappa}$, 
\item[(b)] if $p \in \bbQ^1_\kappa$ then for some $\eta \in \cY$ we
  have $\tr(p) \trianglelefteq \eta \in p$. 
\end{enumerate}
\end{enumerate}
By Convention \ref{a7}, there is $\partial \in S_{\gx} \cap \kappa\cap
E_\gamma$ but $> \min(E_\gamma)$  such that letting $\cY_\partial =
\cY \cap \bold T_{< \partial}$ we have 
\begin{enumerate}
\item[$(*)_4$]  
\begin{enumerate}
\item[(a)] $\ell g(\tr(p_\gamma)) < \partial$, 
\item[(b)] if $p \in \bbQ^1_\partial$ then $\{\eta:\tr(p)
  \trianglelefteq \eta \in p\} \cap \cY_\partial \ne \emptyset$, 
\item[(c)] recalling \ref{a3}(D)(b), $\{(\eta,\nu):\eta \in \cY \cap
  \bold T_{< \partial}$ and $\nu \in p^*_\eta \cap \bold
  T_{< \partial}\} \in \cP_\partial$. 
\end{enumerate}
\end{enumerate}
Define:
\begin{itemize}
\item  $p_{\gamma +1} = \{\eta \in p_\gamma$: if $\ell g(\eta)
  \ge \partial$ and $\{\eta\rest\varepsilon:\varepsilon<\partial\}\cap
  \cY\neq\emptyset$ and $\zeta < \partial$ is minimal such that $\eta
  \rest \zeta \in \cY$ then $\eta\in p^*_{\eta \rest \zeta} \}$, 
\item $q_{\gamma +1} = p_{\gamma +1}$,
\item $E_{\gamma +1} \subseteq E_\gamma \backslash (\partial +1)$ is a
  club of $\kappa$ such that if  $\eta \in q_{\gamma +1} \cap \bold
  T_{< \partial}$ then $E_{\gamma +1}$ is disjoint to some witness for
  $p^*_\eta$. 
\end{itemize}
Clearly $(p_{\gamma +1},q_{\gamma +1},E_{\gamma +1})$ is as required.
\end{PROOF}

\begin{claim}
\label{a24}
If $\kappa \in S_{\gx}$ \then \, $\bbQ^1_\kappa$ is 
$\kappa$--bounding, i.e. $\Vdash_{\bbQ^1_\kappa} ``({}^\kappa
\kappa)^{\bold V}$ is $\le_{J^{\bd}_\kappa}$-cofinal in ${}^\kappa
\kappa"$.
\end{claim}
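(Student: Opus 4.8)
The plan is to establish the apparently stronger statement that below every $p\in\bbQ_\kappa$, and for every $\bbQ_\kappa$-name $\name f$ with $p\Vdash$ ``$\name f\in{}^\kappa\kappa$'', there exist $q\ge_{\bbQ_\kappa}p$ and $g\in({}^\kappa\kappa)^{\bold V}$ with $q\Vdash$ ``$\alpha<\kappa\Rightarrow\name f(\alpha)\le g(\alpha)$''. Since the set of such $q$ is dense, it is then forced that $\name f$ lies below a ground-model function, and, $\name f$ being an arbitrary name of a member of ${}^\kappa\kappa$, this yields $\Vdash_{\bbQ_\kappa}$ ``$({}^\kappa\kappa)^{\bold V}$ is $\le$-cofinal, hence a fortiori $\le_{J^{\bd}_\kappa}$-cofinal, in ${}^\kappa\kappa$''.

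To produce such a $q$ I would run a fusion of length $\kappa$ through the fixed-trunk sequences of Definition \ref{a15}(3): build, by induction on $i<\kappa$, a $\triangleleft$-increasing chain whose union $\langle(p_\alpha,q_\alpha,E_\alpha):\alpha<\kappa\rangle$ lies in $\bold S^{\pr}_\kappa$, starting from $(p_0,q_0,E_0)=(p,p,\kappa)$ as permitted by \ref{a19}(1) --- so every condition keeps the trunk $\tr(p)$, which is precisely what makes a common upper bound available at stage $\kappa$. The heart of the recursion is the successor step $i=j+1$: since $\name f(j)$ is a $\bbQ_\kappa$-name of an ordinal $<\kappa\le\lambda$, hence of a member of $\bold V$, I would apply the Crucial Claim \ref{a21} with $\name\tau:=\name f(j)$ to obtain $(p_{j+1},q_{j+1},E_{j+1})$ continuing the chain inside $\bold S^{\pr}_\kappa$ and such that for each $\eta\in q_{j+1}\cap\bold T_{\min(E_{j+1})}$ the condition $q^{[\eta]}_{j+1}$ forces a value $\gamma_\eta<\kappa$ to $\name f(j)$. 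I then set $g(j):=\sup\{\gamma_\eta:\eta\in q_{j+1}\cap\bold T_{\min(E_{j+1})}\}$; as $\kappa\in S_*$, clause (C) of \ref{a3} gives $\prod\limits_{\varepsilon<\zeta}\theta_\varepsilon<\kappa$ for all $\zeta<\kappa$, so $|\bold T_{\min(E_{j+1})}|<\kappa$ and hence $g(j)<\kappa$ by regularity of $\kappa$, i.e. $g\in({}^\kappa\kappa)^{\bold V}$. At a limit $i$ I would simply take $p_i=\bigcap\{p_\alpha:\alpha<i\}$ and continue using the closure properties of $\bold S^{\pr}_\kappa$ recorded in \ref{a19}(2),(3),(3A).

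When the construction is complete, \ref{a19}(3B) gives that $q:=p_\kappa=\bigcap\{p_\alpha:\alpha<\kappa\}$ belongs to $\bbQ_\kappa$ and is a $\le_{\bbQ_\kappa}$-upper bound of all the $q_\alpha$'s; in particular $q_{j+1}\le_{\bbQ_\kappa}q$ for every $j<\kappa$. By \ref{a11}(2) the set $\{q^{[\eta]}_{j+1}:\eta\in q_{j+1}\cap\bold T_{\min(E_{j+1})}\}$ is predense above $q_{j+1}$, hence above $q$, so in any generic $\name{\bold G}$ with $q\in\name{\bold G}$ one of these conditions belongs to $\name{\bold G}$ and therefore forces $\name f(j)=\gamma_\eta\le g(j)$; as $j<\kappa$ was arbitrary, $q\Vdash$ ``$\name f\le g$'', which completes the density argument. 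I expect the genuine difficulty to be entirely contained in the Crucial Claim \ref{a21} driving the successor step: that is where the hypothesis $\kappa\in S_*$ enters, through Hypothesis \ref{a2} (the diamond $\diamondsuit_{S_*,I^{\wc}_\lambda}$, or the $I^{\wc}_\lambda$-positivity of $\bar{\cP}$), in order to reflect the dense set of trunks that decide $\name f(j)$ down to a strongly inaccessible $\partial\in S_*\cap\kappa\cap E_\gamma$ and so obtain the uniform ``fat'' decision at level $\min(E_{j+1})$; granting \ref{a21}, the rest is routine fusion bookkeeping inside $\bold S^{\pr}_\kappa$ together with the standard density step.
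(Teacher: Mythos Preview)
Your proposal is correct and is precisely the argument the paper intends: its proof of Claim~\ref{a24} consists entirely of the citation ``By \ref{a21} and Claim \ref{a19}'', and you have faithfully unpacked that into the fusion through $\bold S^{\pr}_\kappa$ with \ref{a21} at successor stages and \ref{a19} for limits and the final step $\kappa$. Your reading of \ref{a21} as applying to sequences in $\bold S^{\pr}_\kappa$ (despite the stated hypothesis $\bold S^{\inc}_{\kappa,\gamma+1}$) is the intended one, consistent with its proof and with the trunk-preserving fusion needed here.
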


\begin{PROOF}{\ref{a24}}
By \ref{a21} and Claim \ref{a19}.
\end{PROOF}

\section{What are the desired properties of the ideal} \label{The}

Our original aim was to disprove the existence of a forcing notion for
$\lambda$ having the properties of random real forcing equivalently, finding
for an uncountable cardinal $\lambda$, a $\lambda$-complete ideal on
$\cP({}^\lambda 2)$ parallel to the ideal on null sets on ${}^{\bbN}2$.
Having constructed one raises hopes for generalizing independence results
about reals to ${}^\lambda 2$, so deriving independence results on
$\lambda$-cardinality invariants.

In this section we try systematically to go over basic properties of the
null ideal (and its relation with the meagre ideal). This results in a list
of possible test problems for our ideal. Some of these questions are
addressed in the present work, some are left for further research.
The case of $\bbQ_{\bar\theta}=\bbQ^1_\kappa$ (of \S \ref{Addinga}) is
similar and we intend to comment on it in Part II, i.e. \cite{Sh:F1199}.  

On the meagre and null ideals (for $\lambda = \aleph_0$) see Oxtoby
\cite{Oxt80}. On the measure algebra and random reals see Fremlin's treatise
\cite{Fr0x} or Bartoszy\'nski and Judah \cite{BaJu95}.

How do we measure success? The main properties of the null ideal which come
to my mind are: 
\begin{enumerate}
\item[$\boxplus$]
\begin{enumerate}  
\item[(a)] an $\aleph_1$-complete ideal (with no atoms), 
\item[(b)] the quotient Boolean Algebra satisfies the c.c.c., i.e. there is
  no uncountable family of non-null pairwise disjoint Borel sets,  
\item[(c)] the forcing is bounding: this means the  quotient Boolean Algebra
is $(\aleph_0,\infty)$--distributive, that is if for each $n$, $\langle\bold
B_{n,k}:k \in \bbN\rangle$ is a Borel partition of a non-null Borel set
$\bold B$ \then \, for some function $f:\bbN \rightarrow \bbN$, the set
$\bigcap\limits_{n} \bigcup\limits_{k<f(n)} \bold B_{n,k}$ is not null.   
\end{enumerate}
\end{enumerate}
A priori, for the set theoretic purposes, generalizing (a),(b),(c) was  the
aim.  But for the ideal itself, a prominent property of the
null ideal, and a very nice one, is 
\begin{enumerate}
\item[(d)] the Fubini theorem:  for a Borel set $A \subseteq [0,1] \times
  [0,1]$ the following are equivalent:  
\begin{enumerate}
\item[(i)]  for all but null many $x$, for all but null many $y$ we have
  $(x,y) \in A$, 
\item[(ii)]  for all but null many $y$, for all but null many $x$ we have
  $(x,y) \in A$. 
\end{enumerate}
\end{enumerate}
But alas, this fails, see Claim \ref{u24}.

Maybe it is helpful to stress, that 
\begin{enumerate}
\item[$\boxtimes$] we are looking for $\lambda^+$-complete, $\lambda^+$-c.c.,
  ideal with no atoms.
\end{enumerate}
Below we make a list of statements generalizing the null ideal case,
including the natural analogs of the properties listed above, delaying a try
on some further properties.

A reader who goes first to this section can note just that
\begin{enumerate}
\item[$\oplus$] 
\begin{enumerate}
\item[(a)] the forcing notion $\bbQ_\lambda$ is a set of subtrees of
  ${}^{\lambda >}2$ representing $\lambda$--closed subsets $\lim_\lambda(p)$
  of ${}^\lambda 2$, where $\lim_\lambda(p)=\{\eta\in {}^\lambda 2:
  (\forall\zeta<\lambda)(\eta\rest\zeta\in p)\}$, parallel to the closed
  subsets of $[0,1]_{\bbR}$ with positive Lebesgue measure,  partially
  ordered by inverse inclusion,   
\item[(b)] ${}^\lambda 2$ is the set of functions from $\lambda$ to
  $2=\{0,1\}$. 
\end{enumerate}
\end{enumerate}

\begin{definition}
\label{p1}
Let $\lambda$ be an inaccessible cardinal and let
$\bbQ_\lambda=\bbQ^2_\lambda$ be the forcing notion introduced in \S\
\ref{Adding}.  
\begin{enumerate}
\item For $\eta \in {}^\lambda 2$ and $\cI \subseteq \bbQ_\lambda$, saying
  $\eta$ fulfills $\cI$ means $(\exists q \in \cI)(\eta \in \lim_\lambda(q))$.  
\item  For $\cI \subseteq \bbQ_\lambda$ let $\set(\cI) = \{\eta \in
  {}^\lambda 2:\eta$ fulfills $\cI\}$ and  for a set $\Lambda$ of subsets of
  $\bbQ_\lambda$ let $\set(\Lambda) = \bigcap\{\set(\cI):\cI \in \Lambda\}$.
\item We define $\id(\bbQ_\lambda) = \{A \subseteq {}^\lambda 2$: there are
  $i(*) \le \lambda$ and dense open subsets $\cI_i$ of $\bbQ_\lambda$ for $i
  < i(*)$ such that $\eta \in A \wedge i <i(*) \Rightarrow \eta$ does not
  fulfill $\cI_i\}$. 
\item A $\lambda$--real is $\eta \in {}^\lambda 2$.
\end{enumerate}
\end{definition}
\medskip

\begin{convention}
\label{pp8}
$\lambda,\partial,\kappa$ vary on inaccessibles.
\end{convention}
\medskip

We have consulted several people on additional properties to be
examined. For instance T. Bartoszy\'nski suggested (P),(S),(U) of the first
list below.
\medskip

\subsection {Desirable Properties: First List} \label{Desirable} \
In this subsection we list various desirable properties and questions and
sometimes give a relevant reference (in this paper) \underline{but} we do
not prove anything (whereas \S3 on contain proofs). 
\begin{enumerate}
\item[(A)]  
\begin{enumerate}
\item[$(\alpha)$] The ideal $\id(\bbQ_\lambda)$ is $\lambda^+$--complete,
  i.e. closed under union of $\le \lambda$ sets.
\item[$(\beta)$] The forcing notion $\bbQ_\lambda$ is $\lambda$--complete
  (or at least $\lambda$-strategically complete, depending on the choice of
  the order).  
\item[$(\gamma)$] The Boolean Algebra of $\lambda$--Borel subsets of
  ${}^\lambda 2$ modulo the ideal $\id(\bbQ_\lambda)$ satisfies the
  $\lambda^+$-c.c.,  see \ref{p19}(2). Note that modulo $\id(\bbQ_\lambda)$,
  $\bbQ_\lambda$ is dense in this Boolean Algebra, this is (E) below.
\item[$(\delta)$] The forcing notion $\bbQ_\lambda$ is $\lambda$--bounding, 
  see \ref{z4}(2), \S1, when $\lambda$ is a weakly compact cardinal.  
\end{enumerate}
\item[(B)]   The definability of $\bbQ_\lambda$, i.e., $\bbQ_\lambda$ is
nicely definable (with no parameters), see  the definition by induction in
\S1; if $\lambda$ is weakly compact then $\bbQ_\lambda$ is $\lambda$--Borel,
the ideal is similarly definable, see \ref{k3}; for other inaccessible
cardinals $\lambda$ the ``nowhere stationary'' is $\Sigma^1_1(\lambda)$ but
by a somewhat cumbersome definition giving an equivalent forcing it is
$\lambda$-Borel, see the proof of \ref{a27}.   
\item[(C)]   Generalizing ``adding (forcing) a Cohen real makes the set of
  old reals null'', see \ref{p28}. 
\item[(D)]   Generalizing ``adding (i.e. forcing) a random real makes the
  old real meagre'', see \ref{p15}. 
\item[(E)]  Modulo the ideal $\id(\bbQ_\lambda)$, every $\lambda$--Borel set
  is  equal to a union of at most $\lambda$ sets of the form
  $\lim_\lambda(p)$, $p \in \bbQ_\lambda$, see \ref{p19}. 
\item[(F)] Can we define integral?  We do not know; may we replace
  $[0,1]_{\bbR}$ as a set of values by some complete linear order, e.g. by
  ``nice'' ordered fields?  Are symmetrically complete real closed fields
  relevant (see \cite{Sh:757})?  If we waive linearity does it help?  
\item[(G)]   Modulo the ideal, every $\lambda$--Borel function 
can be approximated by ``steps function of level $\alpha$'' for many (so
unboundedly) many $\alpha < \lambda$; where ``step function'' is being
interpreted as: $f(\eta) \rest \alpha$ is determined by $\eta \rest \alpha$
for $\eta \in {}^\lambda 2$, see \ref{p22}. 
\item[(H)]   The Lebesgue density theorem, see \ref{p23}, (it means: if the
  $\lambda$--Borel set $\bold B \subseteq {}^\lambda 2$ is
  $\id(\bbQ_\kappa)$-positive, then for some $\bold B_1 \in
  \id(\bbQ_\lambda)$ for every $\eta \in \bold B \backslash \bold B_1$ for
  some $\alpha < \lambda$ we have $({}^\lambda 2)^{[\eta \rest \alpha]}
  \backslash \bold B \in \id(\bbQ_\lambda)$).   
\item[(I)]  The Fubini theorem, symmetry, unfortunately fails, see
  \ref{u24}. However we intend to present some weak versions of symmetry in
  a continuation. 
\item[(J)]  The translation invariance, see \ref{p80}(1).
\item[(K)] The permutation invariance (i.e. for permutations of $\lambda$):
  this works only for a variation on our forcing. 
\item[(L)]  Generalizing ``if $A$ is a Borel subset of $[0,1]_{\bbR} \times
  [0,1]_{\bbR}$ of positive measure then $A$  contains a perfect rectangle
  (even half square)".  But what is perfect? Not a copy of ${}^\lambda 2$
  but $\lambda$--closed set, e.g. the $\lambda$-limit of a $\lambda$--Kurepa
  tree, actually  one with ``little pruning in limit levels"; specifically
  it is $\lim_\lambda(p)$ for some $p \in \bbQ_\lambda$, so
  $\lambda$--closed.
\item[(M)]   Generalize the random algebra on ${}^\chi 2$ for $\chi$
  possibly $> \lambda$. This will be addressed in a continuation, see
  \cite[\S 1]{Sh:E82}, \cite{Sh:1100}.  
\item[(N)]   Generalize ``modulo the null ideal every Borel set is equal to
  a union of $\le \lambda$ sets, each $\lambda$--closed" see (E) above and
  see \ref{p19}. 
\item[(O)]   Generalize ``the set of reals is a union of a null set and a
  meagre set", see \ref{p24}.
\item[(P)]  Generalize Erd\"os--Sierpi\'nski theorem: if $2^\lambda =
  \lambda^+$ or suitable cardinal invariants are equal to $\lambda^+$ \then
  \, there is a permutation of ${}^\lambda 2$ interchanging the null and
  meagre ideal. 
\end{enumerate}

In fact, this is not hard now:

\begin{enumerate}
\item[$(*)_1$]  Assume that for $\ell=1,2$:
\begin{enumerate}
\item[(a)] $J_\ell$ is an ideal of subsets of $I$, 
\item[(b)] $J_\ell$ is $|I|$--complete and generated by a family of $\le |I|$
  sets, 
\item[(c)] if $A_1\in J_\ell$ then for some $A_2 \in  J_\ell$ we have $|A_2
  \backslash A_1| = |I|$, and
\item[(d)] there is $A \in J_1$ such that $I \backslash A \in J_2$. 
\end{enumerate}
Then  there is a permutation of $I$ interchanging $J_1$ with $J_2$.
\item[$(*)_2$]  If $2^\lambda = \lambda^+$ and $I = {}^\lambda 2$ then
  the $\lambda$--meagre ideal and $\id(\bbQ_\lambda)$ satisfy (a)--(d) of
  $(*)_1$. 
\end{enumerate}
[Why?  Clause (d) here holds by \ref{p24}.] 

\begin{enumerate}
\item[(Q)]   Generalize the Borel conjecture:  though not connected to 
  random.  Now consider: 
\begin{enumerate}
\item[$(\alpha)$]  the equivalence of the ``for every $\langle
  \varepsilon_n:n\rangle$ the set is covered by $\bigcup\limits_{n} I_n$,
  $I_n$ is an interval of length $\le \varepsilon_n$''  and ``the set can be
  translated away from any meagre set'',
\item[$(\beta)$] the $\varepsilon_n$'s version has an obvious generalization,
\item[$(\gamma)$] try shooting through a normal ultrafilter
\end{enumerate}
\item[(R)]   The dual Borel conjecture might be adressed in Part II. Now the
  question is: 
\begin{enumerate}
\item[$(*)$] We are given an old set $X$ of $\lambda$-reals of cardinality
  $\lambda^+$, say $X = \{\nu_\alpha:\alpha<\lambda^+\}$.  View
  Cohen$_\lambda$ as adding a $\lambda$--null set:  e.g., for $\bar p =
  \langle p_\eta:\eta \in {}^{\lambda>}2\rangle$, $p_\eta \in \bbQ_\lambda$,
  $\tr(p_\eta)=\eta$, and clearly   $p_\eta$ is a nowhere-dense cone, but we
  shall need more. 
\end{enumerate}
\item[(S)]  (Selectors)\quad Every $\Sigma^1_1$--relation have a reasonably
  definable, e.g. $\lambda$--Borel, choice function on a positive closed set
  even in any positive Borel set.
\item[(T)]   The Hausdorff paradox and even Banach-Tarski paradox hold for  
  $\mbR^3$. Do they hold for ${}^\lambda 2\times {}^\lambda 2\times
  {}^\lambda 2$? 
\item[(U)] We know that ``for every meagre set $A$ there is a meagre set $B$
  such that: every $\le \lambda$ translates of $A$ can be covered by one
  translates of $B$'', but fail for null, even for ${}^\omega \bbZ$.
  Generalize to $\lambda$.
\end{enumerate}
On raising further problems see \cite{Sh:F1199}, concerning characters,
differentiability, monotonicity (of functions) and going back to the case
$\lambda = \aleph_0$.
\medskip

We have not looked at clauses (L),(Q), (S)--(U).
\medskip

\subsection {Desirable Properties: Second List}
 \label{TwoDes}
Next we consider generalizing results more set theoretic in nature, related
to forcing (maybe (B)(c),(d) from \S(2A) should be here; from the problems
listed below, (A) is treated here, on the others see part II, if at all)

\begin{enumerate}
\item[(A)]  Cicho\'n's Diagram.
\end{enumerate}
This diagram sums up the provable inequalities between the basic cardinal
invariants of the null ideal, the meagre ideal, $\gd$ (the dominating
number) and $\gb$ (the unbounding number).  The basic cardinal invariants of
an ideal are the covering number, the additivity number, the cofinality and
the non(= uniformity) of the ideal, see \ref{z26}.

The diagram gives the provable inequalities among any two invariants (and
two equalities each on three invariants).  Moreover, under
$2^{\aleph_0} \le \aleph_2$ there are no more connections.  \underline{Here}
we generalize the ZFC part (for $\lambda$ inaccessible limit of
inaccessibles), but the situation is different, e.g., there are more
inequalities connecting 3 of the cardinal invariants, see \ref{u14}.

We will deal with the complementary consistency results (about
inequalities of any pair) in continuations, \cite {Sh:F1580} and others. 

\begin{enumerate}
\item[(B)]   Generalizing the amoeba forcing
\end{enumerate}
The amoeba forcing is the one adding a measure zero set including all the
old ones; the conditions are closed subsets of $[0,1]_{\bbR}$ of measure $>
\frac 12$.

This is natural as the amoeba forcing has been important in set theory of
the reals and is closely related to measure, see Section 7.

\begin{enumerate}
\item[(C)]  The consistency of ``every $A \in \cP(\bbR)^{\bbL[\bbR]}$ is
  Lebesgue  measurable'' (from $\chi > \lambda$ inaccessible).
\end{enumerate}
Solovay \cite{So70} classical work proved for $\lambda = \aleph_0$ that if
we Levy collapse the first inaccessible cardinal to being $\aleph_1$, this
holds.

The problem is: we have names $\name\eta$ of $\lambda$--reals such that
$\Levy(\lambda,{<}\chi)/\name\eta$ is not $\Levy(\lambda,{<}\chi)$ when
$\lambda$ is uncountable. Another formulation of the problem: there are
$\Levy(\lambda,{<}\chi)$--names $\name\eta_1,\name\eta_2$ of
$\lambda$--Cohen reals and no automorphisms of the completion of
$\Levy(\lambda,{<}\chi)$ mapping one to the other.

This certainly occurs for $\lambda$--Cohen reals and probably for any other;
that is we may add a $\lambda$--Cohen $\name\eta \in {}^\lambda 2$ and
compose it with a forcing shooting a club through $\name\eta^{-1}\{\ell\}$.  

A possible avenue is to consider only ``nice
$\Levy(\lambda,{<}\chi)$--names'', i.e. such that the quotient is
$\Levy(\lambda,< \chi)$.  In this case there is a ``positive'' set of
$\lambda$--reals such that for subsets of it our aim is achieved.  We can
even define this set of reals.  The question is whether we consider this is
a ``reasonable'' or a ``forced, artificial'' solution?

Alternatively we may replace $\lambda$--Cohen by another forcing (or ideal)
and/or change the collapse; in particular should check the failure for
$\bbQ_\lambda$.  We also may change the notion of a $\lambda$--real,
e.g. replace it by $A$/(the non-stationary ideal) or use a filter generated by
$\le \lambda$ subsets of $\lambda$!  All this is delayed for later parts.  We
should also check what occurs to sweetness in our present case (see
\cite{RoSh:672,RoSh:856}). 

We may consider $\{\eta \in {}^\lambda 2:\eta$ is $(\bbQ,\name\eta)$-generic
over $\bold V_0$ such that every subset of $\lambda$ from $\bold V_0[\eta]$
which is stationary in it, is also stationary in $\bold V\}$, or more.  A
related question is the complexity of maximal antichains, see \ref{k4d}, maybe
use measurable cardinals.  

What about $\cP(\lambda)$ for $\lambda$ singular strong limit of cofinality
$\aleph_0$? 

\begin{enumerate}
\item[(D)] Can we characterize $\Cohen_\lambda$ and $\bbQ_\lambda$ among
  (nicely definable) $\lambda$--Borel ideals?  Recall Solecki-Kechris
  characterization of Cohen and random (for the ideals). We have not looked
  at it; there are limitations even for $\lambda = \aleph_0$, see e.g.,
  \cite{RoSh:628}.  
\item[(E)]  In \cite{Sh:480} we showed that: for any Suslin c.c.c. forcing,
  if it adds an undominated real, it adds a Cohen real.  
\end{enumerate}
Subsequently some works show relatives (for other properties), on this see
\cite{Sh:711}, \cite{Sh:723}. Related to this, by \cite{Sh:630}, the only
``nice'' c.c.c. forcing commuting with Cohen is Cohen itself.  Do we have a
parallel?

For a broader generalization of the case of $\aleph_0$ we may consider
forcing, ultrafilters and forcing notions definable from ultrafilters. 
\begin{enumerate}
\item[(F)] We know much on ultrafilters on $\bbN$.  Also we have
  considerable knowledge about $\lambda$--complete ultrafilters on $\lambda$
  or higher cardinals when $\lambda$ is a measurable cardinal.  After the
  seventies there were set theoretic advances on non-regular ultrafilters,
  but not much set theoretic work was done on regular ultrafilter. However,
  in recent years there were studies of reasonable ultrafilters in
  \cite{Sh:830}, Ros{\l}anowski and Shelah \cite{RoSh:889,RoSh:890} and
  recently on ultrafilters related to saturation of ultra-powers and Keisler
  order, see Malliaris and Shelah \cite{MiSh:996,MiSh:998} on cuts and $\gp
  = \gt$.
\end{enumerate}

On characters of ultrafilters on $\bbN$ see Brendle and Shelah
\cite{BnSh:642} and later \cite{Sh:846}, \cite{Sh:915}; for an ultrafilter
$D$ on $\lambda$ recall that $\chi(D)$ is the character = minimal
cardinality of a subset generating it, $\pi \chi(D)$ pseudo-character =
minimal cardinality of $\cA \subseteq [\lambda]^\lambda$ such that $(\forall
B \in D)(\exists A \in \cA)[A \subseteq B]$, note that $A \in \cA$ is not
necessarily in $D$!  As in \cite{RoSh:889, RoSh:890} dealing with the so
called reasonable ultrafilters we may consider the Borel version (i.e. the
minimal number of Borel subsets of $D$ which generate it) and $\lambda$-real
version.  Then as in ``reasonable ultrafilter'', can we show CON(for every
uniform ultrafilter $D$ on $\lambda,\pi\chi_{\lambda-\real}(D) = \lambda^+ <
2^\lambda$)?

What about the ultrafilter forcing?  Can reasonable ultrafilters on $\lambda$
be generated by $< 2^\lambda$ sets?  We can force a creature condition 
diagonalizing a uniform ultrafilter on $\lambda$.

\begin{enumerate}
\item[(G)]  Related is Galvin-Prikry theorem which says that for any
Borel (or even $\Sigma^1_1$) subset $\bold B$ of $\cP(\bbN)$ for some set $A
  \in [\bbN]^{\aleph_0}$, the set $[A]^{\aleph_0}$ is included in
  or disjoint from $\bold B$.  Concerning a relative using a group from
  \cite{Sh:273}, generalizations to $\lambda$ are considered by the
  author in some later works: \cite{Sh:664}, \cite{ShVs:718},
  \cite{ShVs:719}, \cite{Sh:724}, see also \cite{GrSh:302},
  \cite{Sh:771}, less related \cite{MShS:121}, \cite{MShS:144}, 
\cite{HkSh:662}
\item[(H)]  The consistency of Moore conjecture; so we should consider
  a topological space $X$ which is $\lambda$--first countable (analog of
  first countable). Of course we can prove it using Dow lemma which holds
  for adding many $\lambda$-Cohens, so not clear how interesting.
\item[(I)]  Preserving ``$\eta$ is $\bbQ_\lambda$--generic over $N$''
  parallel to \cite[Ch.XVIII,\S3]{Sh:c}, \cite[Ch.VI,\S3]{Sh:c}.
\item[(J)]  
\begin{enumerate}
\item[(a)] Try to connect $\cf(\bbQ_\lambda)$ and Cicho\'n's diagram and
  number of  reasonable generators of an ultrafilter, see \cite{Sh:830}.
\item[(b)] Note that for the number of generators of an ultrafilter we have
  the following bounds. 
\end{enumerate}
\end{enumerate}

\begin{claim}
\label{p34d}
\begin{enumerate}
\item Letting $\name\eta_\lambda$ be the $\bbQ_\lambda$--name of the
  generic, for $\alpha<\lambda$ we have that  $\Vdash_{\bbQ_\lambda}$
  ``there is $\bold G' \subseteq  \bbQ_\lambda$ such that: $\bold G'$ is a
  generic subset of $\bbQ_\lambda$ over $\bold V,\bold V[\bold G'] = \bold
  V[\name{\bold G}]$ and  $\name\eta_\lambda[\bold G'] =
  \name\eta_{\lambda,\alpha}[\name{\bold G}]"$ where $\eta_{\lambda,\alpha}
  \in {}^\lambda 2$ is defined by: 
\[\name\eta_{\lambda,\alpha}(i) = \begin{cases}
\name\eta_\lambda(i) \quad &\text{ if } \quad i < \alpha \\
1 - \name\eta_\lambda(i) \quad &\text{ if } \quad i \in 
[\alpha,\lambda).
\end{cases}\]
\item Similarly when for some $A\in\cP(\lambda)^{\bold V}$
\[\name\eta_{\lambda,\alpha}(i) = \begin{cases}
\name\eta_\lambda(i) \quad &\text{ if } \quad i \in A \\
1 - \name\eta_\lambda(i) \quad &\text{ if } \quad i \in 
\lambda\setminus A.
\end{cases}\]
\item $\Vdash_{\bbQ_\lambda} ``\name\eta_\lambda \rest A \ne_{J^{\bd}_A} 
i_A$ for $i=0,1$ for any $A \in ([\lambda]^\lambda)^{\bold V}"$.
\item $\chi(\lambda) := \min\{{\rm gen}(D):D$ a uniform ultrafilter on  
$\lambda\}$ is $\ge \cov(\bbQ_\lambda),\cov(\Cohen_\lambda)$. 
\end{enumerate}
\end{claim}
But we can still hope to find a relative of $\bbQ_\lambda$ such that adding
$\lambda^{++}$ such $\lambda$-reals (e.g. as in \cite{Sh:F1580}) we get a
universe $\bold V_1$ with $2^\lambda = \lambda^{++} +$ there is a uniform
ultrafilter $D$ on $\lambda$ with $\chi(D) = \lambda^+$.

\begin{enumerate}
\item[(K)] Here we start with $\lambda$-Cohen forcing (for $\chi$
  inaccesible not limit of  inaccessibles).  We can start with
  $\bbQ_{\bar\theta \rest \lambda}$ or with other definable
  $\lambda^+$-c.c. forcing; see part II. 
\end{enumerate}

\section {On $\bbQ_\kappa$, $\kappa$--Borel sets and $\id(\bbQ_\kappa)$}    
\label{idBorel}
In this and the following sections we analyze the ideal
$\id(\bbQ_\kappa)$. A general frame including \ref{p1} is the following. 

\begin{definition}
\label{p2}
\begin{enumerate}
\item Let $\id(\Cohen_\kappa)$ be the family of all $\kappa$--meagre subsets
  of ${}^\kappa 2$, i.e., it is the collection of all $A \subseteq {}^\kappa
  2$ such that $A \subseteq \bigcup\{\lim_\kappa(\cT_i)$ for $i<\kappa\}$,
  where each $\cT_i$ is a nowhere dense subtree of ${}^{\kappa >}2$, i.e.,
  $({}^{\kappa >}2,\triangleleft)$.  
\item We say $\bold i=(\kappa,\bbQ,\name\eta)=(\kappa_{\bold i}, \bbQ_{\bold
    i}, \name\eta_{\bold i})$ is an ideal case \when \, :
\begin{enumerate}
\item[(a)]  $\kappa$ is a regular cardinal, 
\item[(b)]  $\bbQ$ is a forcing notion not adding bounded subsets of
  $\kappa$,  
\item[(c)]  $\name\eta$ is a $\bbQ$-name of a member of ${}^\kappa 2$, 
\item[(d)]  
  \begin{enumerate}
  \item[$(\alpha)$] each $p \in \bbQ$ is a subtree of $({}^{\kappa >}2,
    \trianglelefteq)$ and let $\bold B_p = \bold B_{\bold i,p}
    =\lim_\kappa(p)$, and $p\Vdash$``$\name{\eta}\in  \bold B_{\bold
      i,p}$'', \underline{or} at least 
\item[$(\beta)$] we have a mapping $p \mapsto \bold B_p =
  \bold B_{\bold i,p}$ such that 
  \begin{itemize}
\item $\bold B_{\bold i,p}$ is  a $\kappa$--Borel subset of ${}^\kappa 2$, 
\item $p\leq q \ \Rightarrow \ {\bold B}_{{\bold i},p}\supseteq {\bold
    B}_{{\bold i},q}$,  and 
\item $p \Vdash``\name\eta \in \bold B_{\bold i,p}$''; 
  \end{itemize}
so really the  function $p \mapsto\bold B_p$ is part of $\bold i$. 
  \end{enumerate}
\end{enumerate}
Below let $\bold i = (\kappa,\bbQ,\name\eta)$ be an ideal case. 
\item We let $\id^1_{\bold i} =\id_1(\bold i)$ be 
\[\big\{A \subseteq {}^\kappa 2:\mbox{ for some $\kappa$--Borel set $\bold
  B$ we have }A \subseteq \bold B\mbox{ and }\Vdash_{\bbQ} ``\name\eta
\notin \bold B\mbox{''}\};\]
we may omit the 1. 
\item For a subset $\cI$ of $\bbQ_{\bold i}$, we say that $\eta \in
  {}^\kappa 2$ fulfills $\cI$ \when \, $(\exists p \in \cI)(\eta \in \bold
  B_p)$. 
\item We define $\id^2_{\bold i} = \id_2(\bold i)$ to be the collection of
  all sets $A\subseteq {}^\kappa 2$ such that there are pre-dense subsets
  $\cI_i$ of $\bbQ_{\bold i}$ for $i<\kappa$ such that 
\[A \subseteq \big\{\eta \in {}^\kappa 2: \mbox{ for some $i<\kappa$, $\eta$ 
  does not fulfill }\cI_i\big\}.\] 
\end{enumerate}
\end{definition}

\begin{claim}
\label{p4d}
Let $\bold i$ be an ideal case.
\begin{enumerate}
\item Both $\id_1(\bold i)$ and $\id_2(\bold i)$ are $\kappa^+$--complete
  ideals on  ${}^\kappa 2$. Also $ {}^{ \kappa } 2\notin\id_1({\bold i})$
  and if $ {\bold i} $ is $ \kappa$--complete then\footnote{ Recall Prikry
    forcing} ${}^{ \kappa } 2\notin \id_1({\bold i})$.
\item In Definition \ref{p2}(5) we can replace ``pre-dense'' by ``dense 
  open'' or by ``maximal antichain''.  
\item  If $\bbQ_{\bold i}$ satisfies the $\kappa^+$-c.c. \then \,
$\id_2(\bold i) \subseteq \id_1(\bold i)$. 
\item  A sufficient condition for $\id_1(\bold i) \subseteq \id_2(\bold  
   i)$ is: 
\begin{enumerate}
\item[$(*)$]  
\begin{enumerate} 
\item[(a)] if $p,q\in\bbQ_{\bold i}$ are incompatible then ${\bold
    B}_{{\bold i},p}\cap {\bold B}_{{\bold i},q}=\emptyset$,  and
\item[(b)] if $\bold B$ is a $\kappa$--Borel set \then \, 
\[\big\{p \in \bbQ_{\bold i}:\ p\Vdash_{\bbQ_{\bold i}} ``\name\eta \in
\bold B\mbox{''  or }\bold B_p\cap \bold B \in \id_2(\bold i)\big\}\]
is a dense open subset of $\bbQ_{\bold i}$.
\end{enumerate}
\end{enumerate}
\item Let $\kappa$ be strongly inaccessible and $\bbQ_\kappa$ and
  $\name{\eta}$ be as defined in \ref{n5} and \ref{p10}(4),
  respectively. Then the triple $\bold i = \bold i^{\null}_\kappa =
  (\kappa,\bbQ_\kappa,\name\eta)$ is an ideal case and  $\id_1(\bold i) =
  \id_2(\bold i)$. 
\item The triple $\bold i = \bold i^{\Cohen}_\kappa =
   (\kappa,\Cohen_\kappa,\name\eta)$ is an ideal case and 
we have $\id_1(\bold i) = \id_2(\bold i)$ and it is closed under
translations (cf \ref{p80}). 
\end{enumerate}
\end{claim}

\begin{remark}
   If in Definition \ref{p2}(2)(d), $\bold B_p$ is just a Borel set, then
   \ref{p4d} still holds.
\end{remark}

\begin{PROOF}{\ref{p4d}}
(1), (2)\quad Obvious by the definitions.
\medskip

\noindent
(3)\quad Assume $A \subseteq {}^\kappa 2$ belongs to $\id_2(\bold i)$.  Then
by (2) we may find maximal antichains $\cI_i \subseteq \bbQ_{\bold i}$ (for
$i<\kappa$) such that 
\[\eta \in A\quad \Rightarrow\quad \mbox{ for some }i<\kappa, \ \eta\mbox{ 
does not fulfill }\cI_i.\]  
Since we are assuming that $\bbQ_{\bold i}$ satisfies the $\kappa^+$-c.c.,
$\cI_i$ has cardinality $\le \kappa$ for every $i<\kappa$. Let $\langle
p_{i,\varepsilon}:\varepsilon < \varepsilon_i\rangle$ list $\cI_i$,
$\varepsilon_i\leq \kappa$. Then 
\[A \subseteq \bold B := \bigcup\limits_{i<\kappa}  ({}^\kappa 2 \backslash
\bigcup \{{\bold B}_{\bold i,p_{i,\varepsilon}}:\varepsilon <
\varepsilon_i\}).\]
Clearly $\bold B$ is a $\kappa$--Borel set. Also, since each $\cI_i$ is a
maximal antichain, for all $i<\kappa$ we have  
\[\Vdash_{\bbQ_{\bold i}}\mbox{`` }\cI_i \cap \name{\bold G}_{\bbQ_{\bold i}}
\ne \emptyset\mbox{ and hence }\name\eta \in \bold
  B_{\bold i,p_{i,\varepsilon}}\mbox{ for some }\varepsilon < \varepsilon_i
  \mbox{'',}\]  
and hence $\Vdash_{\bbQ_{\bold i}}``\name\eta \notin \bold
B$''. Consequently $\bold B \in \id_1(\bold i)$ but $A\subseteq \bold B$
hence $A \in \id_1(\bold i)$,  so we are done.
\medskip

\noindent
(4)\quad Assume $\bold B$ is a $\kappa$--Borel set and it belongs to
   $\id_1(\bold i)$. We shall prove $\bold B \in \id_2(\bold i)$, clearly
   this suffices. 

Let $\cI = \{p:p$ forces $\name\eta \in \bold B$ or forces $\bold B_p \cap
\bold B \in \id_2(\bold i)\}$, so by the assumption $(*)$(b) the set $\cI$
is an open dense subset of $\bbQ_{\bold i}$.  Let $\cI' \subseteq \cI$ be a 
maximal antichain and let $\cI'' = \{p \in \cI':p \nVdash_{\bbQ_{\bold i}} 
``\name\eta \in \bold B"\}$. Since we assumed $\bold B \in \id_1(\bold i)$,
necessarily $\cI'' = \cI'$. So for each $p \in \cI''$, $\bold B_p \cap \bold
B \in \id_2(\bold i)$ and there is a sequence $\langle \cI_{p,i}:i
<\kappa\rangle$ witnessing it.  \Wilog \, if $i<\kappa$, $p \in \cI''$ then
$\cI_{p,i}$ is a maximal antichain of $\bbQ_{\bold i}$ and for every $q \in \cI_{p,i}$
we have $(p \le q) \vee (p,q\mbox{ are incompatible})$. For $i<\kappa$ let 
\[\cI^i=\big\{q\in\bbQ_{\bold i}: \mbox{ for some $p\in\cI''$ we have }
(p\leq q)\wedge q\in \cI_{p,i}\big\}.\] 
Clearly, each $\cI^i$ is a maximal antichain. Easily $\{\cI^i:i<\kappa\}$
witnesses $\bold B$ is included in some member of $\id_2(\bold i)$, so we
are done. 
\medskip

\noindent
(5)\quad For being an ideal case, in Definition \ref{p2}(2), clauses
(a),(b),(c) are obvious (remember Claim \ref{n11} and Observation
\ref{p10}(4)) and clause (d) is easy, too.  It suffices to prove that
$\id_2(\bold i) \subseteq \id_1(\bold i)$ and $\id_1(\bold i) \subseteq
\id_2(\bold i)$. 

Concerning ``$\id_2(\bold i) \subseteq \id_1(\bold i)$'' note that
$\bbQ_\kappa$ satisfies the $\kappa^+$-c.c., so by \ref{p4d}(3) we deduce
the inclusion.

Let us argue that $\id_1(\bold i) \subseteq \id_2(\bold i)$. Suppose that
$\bB$ is a $\kappa$--Borel subset of ${}^\kappa 2$ and
$\Vdash_{\bbQ_\kappa}$``$\name{\eta}\notin \bB$''. We may find $\cT$ and
$\bar{\bB}$ such that 
\begin{enumerate}
\item[$(\circledast)$] 
\begin{enumerate}
\item[(a)]  $\cT$ is a subtree of ${}^{\omega>}\kappa$ with no infinite
    branch, 
\item[(b)]  for every $\rho\in \cT$, either $\suc_\cT(\rho)=\emptyset$, or
  $\suc_{\cT}(\rho)=\{\rho\char 94\langle 0\rangle\}$ or $\suc_\cT(\rho)$ is infinite,
\item[(c)] $\bar{\bB}=\langle\bB_\rho:\rho\in \cT\rangle$ is a system of 
  $\kappa$--Borel subsets of ${}^\kappa2$, 
\item[(d)] $\bB_{\langle\rangle}=\bB$,
\item[(e)] if $\rho\in \cT$ and $\suc_\cT(\rho)=\emptyset$, then for some
  $i_\rho<\kappa$ and $c_\rho<2$ we have $\bB_\rho=\{\nu\in {}^\kappa 2:
  \nu(i_\rho)=c_\rho\}$,
\item[(f)] if $\rho\in\cT$ and $|\suc_\cT(\rho)|=1$, then
  $\bB_\rho={}^\kappa 2\setminus \bB_{\rho\char 94\langle0\rangle}$,
\item[(g)] if $\rho\in \cT$ and $\suc_\cT(\rho)$ is infinite, then
  $\bB_\rho=\bigcap\{\bB_\varrho:\varrho\in\suc_\cT(\rho)\}$.
\end{enumerate}
\end{enumerate}
Then by induction on $\lh(\rho)$ for each $\rho\in \cT$ we choose $\cI_\rho$
and $\bar{t}^\rho$ so that for each $\rho\in \cT$:
\begin{enumerate}
\item[$(\otimes)$]
\begin{enumerate}
\item[(a)] $\cI_\rho$ is a maximal antichain of $\bbQ_\kappa$ and
  $\bar{t}^\rho=\langle t^\rho_p:p\in\cI_\rho\rangle$, $t^\rho_p<2$ for each
  $p\in\cI_\rho$, 
\item[(b)] if $t^\rho_p=1$, then $p\Vdash$``$\name{\eta}\in \bB_\rho$'' and
  if $t^\rho_p=0$, then $p\Vdash$``$\name{\eta}\notin \bB_\rho$'', 
\item[(c)] if $\suc_\cT(\rho)=\emptyset$ and $p\in\cI_\rho$, then
  $\lh(\tr(p))> i_\rho$ (see $(\circledast)$(e) above), 
\item[(d)] if $|\suc_\cT(\rho)|=1$, then $\cI_\rho=\cI_{\rho\char 94 \langle
    0\rangle}$ and $t^\rho_p=1-t^{\rho\char 94\langle 0\rangle}_p$ for
  $p\in\cI_\rho$, 
\item[(e)] if $\suc_\cT(\rho)$ is infinite, $p\in\cI_\rho$ and $t^\rho_p=0$,
  then $p\Vdash$``$\name{\eta}\notin \bB_\varrho$'' for some $\varrho\in 
  \suc_\cT(\rho)$,
\item[(f)] if $\rho\vartriangleleft \varrho\in\cT$ and $q\in \cI_\varrho$,
  then there is unique $p\in \cI_\rho$ such that $p\leq q$. 
\end{enumerate}
\end{enumerate}
Now let $Y=\bigcap\limits_{\rho\in \cT} \set(\cI_\rho)$ (see \ref{p1}(2))
and note that ${}^\kappa2\setminus Y\in\id_2({\bold i})$. By induction on
${\rm dp}(\rho,\cT)$ we are going to argue that for $\rho\in\cT$:
\begin{enumerate}
\item[$(\heartsuit)_\rho$] for each $\nu\in Y$ we have
\[\nu\in \bB_\rho\quad \iff\quad (\exists p\in\cI_\rho)(\nu\in {\lim}_\kappa
(p)\ \wedge\ t^\rho_p=1).\]  
\end{enumerate}

\noindent {\sc Case 1:}\quad $\suc_\cT(\rho)=0$.\\
Since $\nu\in Y$ there is unique $p\in\cI_\rho$ such that
$\nu\in\lim_\kappa(p)$, recalling that for $p,q\in\bbQ_\kappa$
\[(p,q\mbox{ are incompatible })\ \Rightarrow\ (\tr(p) \notin q\vee 
\tr(q)\notin p)\ \Rightarrow\ {\lim}_\kappa(p)\cap {\lim}_\kappa(q)
=\emptyset.\]
We know that $\bB_\rho=\{\nu\in {}^\kappa 2: \nu(i_\rho)=c_\rho\}$ (see
$(\circledast)$(e)) and $\lh(\tr(p))>i_p$ (see $(\otimes)$(c)), so
\[\nu\in \bB_\rho\iff \tr(p)(i_p)=c_p\iff t^\rho_p=1.\]

\noindent {\sc Case 2:}\quad $|\suc_\cT(\rho)|=1$.\\
Let $p$ be the unique element of $\cI_\rho=\cI_{\rho\char 94\langle
  0\rangle}$ such that $\nu\in\lim_\kappa(p)$. Then 
\[\nu\in\bB_\rho\iff \nu\notin \bB_{\rho\char 94\langle
  0\rangle}\iff t^{\rho\char 94\langle 0\rangle}_p=0\iff t^\rho_p=1.\] 

\noindent {\sc Case 3:}\quad $\suc_\cT(\rho)$ is infinite.\\
Let $p$ be the unique element of $\cI_\rho$ such that
$\nu\in\lim_\kappa(p)$.\\
First, assume $t^\rho_p=1$. Thus $p\Vdash$``$\name{\eta}\in \bB_\rho= 
\bigcap\{\bB_\varrho: \varrho\in\suc_\cT(\rho)\}$''. Suppose that
$\varrho\in\suc_\cT(\rho)$ and let $q$ be the unique element of
$\cI_\varrho$ such that $\nu\in\lim_\kappa(q)$. Then, by $(\otimes)$(f),
$p\leq q$ and hence $q\Vdash$``$\name{\eta}\in \bB_\rho\subseteq
\bB_\varrho$'', so $t^\varrho_q=1$. By the inductive hypothesis we get
$\nu\in\bB_\varrho$. Since $\varrho\in\suc_\cT(\rho)$ was arbitrary we
conclude that $\nu\in \bigcap\{\bB_\varrho: \varrho\in\suc_\cT(\rho)\}=
\bB_\rho$.\\
Second, assume $t^\rho_p=0$. By $(\otimes)$(e) we know that
$p\Vdash$``$\name{\eta} \notin\bB_\varrho$'' for some
$\varrho\in\suc_\cT(\rho)$. Let $q\in \cI_\varrho$ be the unique element
such that $\nu\in\lim_\kappa(q)$. Then $p\leq q$ and hence
$t^\varrho_q=0$. By the inductive hypothesis we get $\nu\notin \bB_\varrho$
and hence also $\nu\notin\bB_\rho$. 
\smallskip

Finally note that our  assumption ``$\Vdash\name{\eta}\notin\bB$'' implies
that $t^{\langle\rangle}_p=0$ for all $p\in\cI_{\langle\rangle}$. Therefore,
$(\heartsuit)_{\langle\rangle}$ implies $Y\cap \bB=\emptyset$, so $\bB\in
\id_2({\bold i})$. 
\medskip

\noindent
(6)\quad This is similar but easier.
\end{PROOF}

\begin{definition}
\label{p3}
1) For $\bold i$ as in \ref{p2}, we define $\cov(\bold i),\add(\bold
i),\non(\bold i),\cf(\bold i)$ as those numbers for the ideal $\id({\bold
  i})$, see \ref{z26}.

\noindent
2) If $\kappa_{\bold i},\name\eta_{\bold i}$ are clear from $\bbQ_{\bold i}$
we may write $\bbQ_{\bold i}$ instead of $\bold i$ and write
$\id(\bbQ_{\bold i})$ etc. In particular we will be using this convention
for $\bbQ_\kappa$ from \S1 and for Cohen$_\kappa$. 
\end{definition}

\noindent
Recalling $S^\kappa_{\inc} = \{\partial:\partial < \kappa$ is
inaccessible$\}$, note that for low inaccessible $\kappa$'s,
$\bbQ_\kappa$ is like $\kappa$-Cohen, that is,

\begin{claim}
\label{p5}
1) If $\kappa > \sup(S^\kappa_{\inc})$ \then \, for some open dense subsets
$\cI_1,\cI_2$ of $\bbQ_\kappa,\Cohen_\kappa$ respectively, we have
$\bbQ_\kappa \rest \cI_1 \cong \Cohen_\kappa \rest \cI_2$.

\noindent
2) If $S \subseteq S^\kappa_{\inc}$ is bounded in $\kappa$ \then \,
$\bbQ_{\kappa,S}$ satisfies the conclusion of part (1), where
$\bbQ_{\kappa,S}$ is naturally defined as $\bbQ_\kappa\rest \{p:
S_p\subseteq S\}$.
\end{claim}

\begin{PROOF}{\ref{p5}}
1) Let $\mu = \sup(S^\kappa_{\inc})$, so $\mu < \kappa$.

Let $\cI_1 = \{p \in \bbQ_\kappa:\ell g(\tr(p)) \ge \mu\}$, let $\cI_2 = \{\eta
\in \text{ Cohen}_\kappa:\ell g(\eta) \ge \mu\}$ and $F:\cI_1
\rightarrow \cI_2$ be $F(p) = \tr(p)$.

\noindent
2) Similarly.
\end{PROOF}

\begin{claim}
\label{p8}
1) $\id(\bbQ_\kappa)$ is a $\kappa^+$--complete ideal on ${}^\kappa 2$
   and also $\id(\Cohen_\kappa)$ is.

\noindent
2) If $\kappa$ is weakly compact and $\cI_\alpha \subseteq
\bbQ_\kappa$ is pre-dense for $\alpha < \alpha_* < \kappa^+$ \then
 \, the sets $\cJ^*_1,\cJ^*_2$ are dense open subsets of $\bbQ_\kappa$ where

\begin{equation*}
\begin{array}{clcr}
\cJ^*_1 = \big\{p \in \bbQ_\kappa: &\text{for every } \alpha < \alpha_*
  \text{ there is } \partial < \kappa \text{ such that }\ \\
  &[\eta \in p \cap {}^\partial 2 \Rightarrow p^{[\eta]} \text{ is
  above some } q \in \cI_\alpha]\big\}.
\end{array}
\end{equation*}
and
\[\cJ^*_2 = \big\{p \in \bbQ_\kappa:{\lim}_\kappa(p)\subseteq
\bigcap_{\alpha<\alpha_*} \set(\cI_\alpha)\big\}\] 
(see \ref{p1}(2)). 

\mn 
3) Assume $\kappa$ is weakly compact.  Suppose that $p \in \bbQ_\kappa$ as
witnessed by $(\tr(p),S_p,\bar{\Lambda}_p)$, $\alpha < \kappa$ and let $\bold B
\subseteq {}^\kappa 2$ be a $\kappa$--Borel set. \Then \, there is
$q\in\bbQ_\kappa$ such that:  
\begin{enumerate}
\item[(i)] $p \le q$, $\tr(p) = \tr(q)$, 
\item[(ii)] $S_p \cap\alpha = S_q \cap \alpha$, $\bar\Lambda_p \rest \alpha =
  \bar\Lambda_q \rest \alpha$ and 
\item[(iii)] for some $\beta \in (\alpha,\kappa)$, if $\nu \in q \cap
  {}^\beta 2$ then  
\begin{enumerate}
\item[either]
  $q^{[\nu]} \Vdash ``\name\eta \in \bold B$'' and  $\lim_\kappa(q^{[\nu]})
  \subseteq \bold B$,  
\item[or] $q^{[\nu]} \Vdash ``\name\eta\notin \bold B$'' and
  $\lim_\kappa(q^{[\nu]}) \cap \bold B = \emptyset$. 
\end{enumerate}
\end{enumerate}
\end{claim}

\begin{PROOF}{\ref{p8}}
1) By \ref{p4d}(1).

\noindent
2) By \ref{n13}(2), pedantically by its proof.

\noindent
3) We prove this by the induction on the depth $\gamma$ of (the
$\kappa$--Borel representation; see the proof of \ref{p4d}(5)) of $\bold B$.   
\medskip

\noindent
\underline{\sc Case 1}:\quad  $\gamma = 0$ so $\bold B = \{\nu \in
{}^\kappa 2:\nu(i) = c\}$ for some $i<\kappa$, $c<2$.

Obvious.
\medskip

\noindent
\underline{\sc Case 2}:\quad  $\bold B$ is the complement of a
$\kappa$--Borel set $\bold B_1$ of depth $<\gamma$.

Obvious by the phrasing of (3)(iii).
\medskip

\noindent
\underline{\sc Case 3}:\quad  $\bold B = \bigcap\limits_{\alpha < \alpha(*)}
\bold B_\alpha$, where $\alpha(*) \le \kappa$ and $\bold B_\alpha$ are
$\kappa$--Borel sets of depth $<\gamma$.

\noindent Let $\cI^1_\alpha = \{q \in \bbQ_\kappa:q$ satisfies (3)(iii) for
$\bold B_\alpha$ and $\alpha$ with $\beta=\beta_{q,\alpha}<\kappa\}$. By the
induction hypothesis $\cI^1_\alpha$ is dense open in $\bbQ_\kappa$. Let 
\begin{equation*}
\begin{array}{clcr}
\cI_2 = \big\{q\in \bbQ_\kappa: &\text{ \underline{ either }}\  q\Vdash
``\name\eta \notin \bold B_\alpha^{\bold V[\bbQ_\kappa]}" \text{ for some }
\alpha = \alpha(q) < \alpha_* \\ 
  &\text{ \underline{ or }}\ q \Vdash ``\name \eta \in \bold B^{\bold
    V[\bbQ_\kappa]}" \big\}.
\end{array}
\end{equation*}
Clearly $\cI_2$ is dense open. Let
\[\cI_{3,1} = \{q\in\cI_2:q \Vdash ``\name\eta \notin \bold B_{\alpha(q)}"
\text{ and } q \in \cI^1_{\alpha(q)}\}. \]
Then for $q\in \cI_{3,1}$ we have $(\exists \beta)(\forall \nu \in q
\cap {}^\beta 2)({\lim}_\kappa(q^{[\nu]}) \cap \bold B_{\alpha(q)} =
\emptyset)$ and hence $\lim_\kappa(q) \cap \bold B_{\alpha(q)}=
\emptyset$ for $q\in\cI_{3,1}$. We let
\[\cI_{3,2} = \{q\in\bbQ_\kappa:q \Vdash ``\name\eta \in \bold B" \text{ and } 
{\lim}_\kappa(q) \subseteq {\bold B}\}\] 
and finally we set $\cI_3 = \cI_{3,1} \cup \cI_{3,2}$.

Next consider:
\begin{enumerate}
\item[$(\divideontimes)$]  for every $q_0\in \bbQ_\kappa$ there is $q \in \cI_3$
  above $q_0$.
\end{enumerate}
\mn
Why is $(\divideontimes)$ sufficient?  First note that for every $q \in
\cI_3$ the demand (3)(iii) hold for the pair $(q,\bold B)$. Indeed, by the
definition of $\cI_3$ we have to check the two possibilities: $q \in
\cI_{3,1}$ and $q \in \cI_{3,2}$.  If $q \in \cI_{3,1}$, then $\alpha(q)$ is
well defined and $\lim_\kappa(q)\cap {\bold B}_{\alpha(q)}=\emptyset$, so
$\beta=0$ is as required.  If $q \in \cI_{3,2}$ then also $\beta =0$ is as 
required. Now we may use $(\divideontimes)$ and \ref{n13}(2) to get
$q\in\bbQ_\kappa$ satisfying (i)--(iii) of (3).

\mn Why does $(\divideontimes)$ hold?  Let $q_0\in\bbQ_\kappa$ be given. We
may find $q_1$ above $q_0$ such that either $q_1\Vdash \name{\eta}\in\bB$ or
$q_1\Vdash \name{\eta}\notin\bB$. First assume that the latter is true. Then
for some $\alpha<\alpha(*)$ and $q_2\geq q_1$ we have $q_2\Vdash
\name{\eta}\notin\bB_\alpha$. By the inductive hypothesis there is $q_3\geq
q_2$ satisfying (3)(iii) for $\bB_\alpha$ and $\alpha$. Since $q_3\Vdash
\name{\eta}\notin\bB_\alpha$, this implies
$\lim_\kappa(q_3)\cap\bB_\alpha=\emptyset$ and therefore $q_3\in \cI_{3,1}
\subseteq \cI_3$.

Second, assume $q_1\Vdash \name{\eta}\in\bB$, i.e., 
$q_1\Vdash$``$\name{\eta}\in\bB_\alpha$ for every $\alpha<\alpha(*)$''. Let 
\[\cI_{3,2,\alpha}=\big\{r\in\bbQ_\kappa: r\mbox{ is incompatible with $q_1$ or
} q_1\leq r \mbox{ and }{\lim}_\kappa(r)\subseteq \bB_\alpha\big\};\]
by the inductive hypothesis it is an open dense set. By \ref{p8}(2) we may
find $q_4\geq q_1$ such that  
\[\big(\forall\alpha <\alpha(*)\big)\big(\exists\partial<\kappa\big)
\big(\eta \in q_4 \cap {}^\partial 2\ \Rightarrow\ (q_4)^{[\eta]}\in
\cI_{3,2,\alpha}\big).\] 
Since $(q_4)^{[\eta]}\in\cI_{3,2,\alpha}$ implies
${\lim}_\kappa((q_4)^{[\eta]})\subseteq \bB_\alpha$ (as $q_4\geq q_1$), we 
conclude $\lim_\kappa(q_4)\subseteq {\bold B}_\alpha$ for all
$\alpha<\alpha(*)$. Hence $q_4\in \cI_{3,2}\subseteq\cI_3$.
\end{PROOF}

\begin{claim}
\label{p80}
Considering ${}^\kappa 2$ as an Abelian Group (with addition $\oplus$ modulo
2, coordinatewise), the ideal $\id(\bbQ_\kappa)$ is closed under
translation, i.e. if $\bold B \subseteq {}^\kappa 2$ and $\eta \in {}^\kappa
2$ then $\bold B \in \id(\bbQ_\kappa) \Leftrightarrow \eta \oplus \bold B
\in \id(\bbQ_\kappa)$ where $\eta \oplus \bold B := \{\eta \oplus \nu:\nu
\in \bold B\}$.
\end{claim}

\begin{PROOF}{\ref{p80}}
Straightforward.
\end{PROOF}

\begin{claim}
\label{p24}
If $\kappa$ is an inaccessible limit of inaccessibles, \then \, ${}^\kappa
2$ can be partitioned to two sets $A_0,A_1$ such that $A_0$ is in
$\id(\Cohen_\kappa)$ and $A_1$ is in $\id(\bbQ_\kappa)$. 
\end{claim}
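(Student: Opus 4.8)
I would imitate the proof of \ref{p15}(1), but evaluated at a single fixed ground‑model $\lambda$‑real rather than at a varying one. Let $\langle\partial_i:i<\lambda\rangle$ enumerate increasingly the strongly inaccessible cardinals $<\lambda$, put $B_i=(\partial_i+1,\partial_{i+1})$, let $\nu_*\in({}^\lambda 2)^{\bold V}$ be constantly $0$, and for $j<\lambda$ set
\[
C_j=\{\eta\in{}^\lambda 2:\eta\rest B_i\ne\nu_*\rest B_i\text{ for all }i\in[j,\lambda)\},\qquad M=\bigcup_{j<\lambda}C_j .
\]
I would take $A_0=M$ and $A_1={}^\lambda 2\setminus M$; these clearly partition ${}^\lambda 2$, and $M$ is visibly $\lambda$‑Borel. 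The idea is that $M$ is exactly the $\lambda$‑meagre set manufactured in the proof of \ref{p15}(1), read at $\nu_*$.

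\textbf{$A_0\in\id(\Cohen_\lambda)$.} Each $C_j$ is $\lambda$‑closed and $\lambda$‑nowhere‑dense: given $\varrho\in{}^{\lambda>}2$, pick $i\ge j$ with $\partial_i+1>\ell g(\varrho)$ and extend $\varrho$ past $\partial_{i+1}$ so as to be constantly $0$ on $B_i$; then the cone above the result misses $C_j$. Hence $M$ is a union of $\le\lambda$ many $\lambda$‑nowhere‑dense sets, so $M\in\id(\Cohen_\lambda)$.

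\textbf{$A_1\in\id(\bbQ_\lambda)$.} By the proof of \ref{p15}(1) we have $\Vdash_{\bbQ_\lambda}$ ``every $\nu\in({}^\lambda 2)^{\bold V}$ differs from $\name\eta$ on a final segment of the blocks $B_i$''; instantiating at $\nu=\nu_*$ gives $\Vdash_{\bbQ_\lambda}\name\eta\in M$, i.e.\ $\Vdash_{\bbQ_\lambda}\name\eta\notin A_1$, which is what membership in the null ideal amounts to for the $\lambda$‑Borel set $A_1$. To put this in the form of Definition \ref{p1}, I would unwind the same argument: given $p\in\bbQ_\lambda$ with witness $(\tr(p),S,\bar\Lambda)$, choose $i_*$ with $\partial_{i_*}>\ell g(\tr(p))$, enlarge the witness by adjoining to $S$ the nowhere‑stationary set $\{\partial_{i+1}:i_*\le i<\lambda\}$ of inaccessibles (nowhere stationary, as in \ref{p15}(1)'s proof, since each $\partial_{i+1}$ is an immediate successor in the enumeration hence not a limit point of $\{\partial_i:i<\lambda\}$), and putting into $\Lambda_{\partial_{i+1}}$ the dense open set $\cI^*_{\partial_{i+1}}=\{s\in\bbQ_{\partial_{i+1}}:\tr(s)(\beta)=1\text{ for some }\beta\in B_i\}$ (dense and open by the usual splitting argument, \ref{n5}(d), \ref{n7}(3)). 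By \ref{n7}(7) this yields $q\ge_{\bbQ_\lambda}p$, and clause (g)$(\beta)$ of Definition \ref{n5} forces each $\eta\in\lim_\lambda(q)$ to have $\eta\rest\partial_{i+1}$ fulfill $\cI^*_{\partial_{i+1}}$, whence $\eta\rest B_i\ne\nu_*\rest B_i$ for all $i\ge i_*$; thus $\lim_\lambda(q)\subseteq C_{i_*}$. Consequently the set $\cI=\{q\in\bbQ_\lambda:\lim_\lambda(q)\subseteq C_j\text{ for some }j<\lambda\}$ is dense, and it is open since $\lim_\lambda(\cdot)$ decreases along $\le_{\bbQ_\lambda}$; moreover $\{\eta:\eta\text{ fulfills }\cI\}\subseteq\bigcup_j C_j=M$, so no $\eta\in A_1={}^\lambda 2\setminus M$ fulfills $\cI$, i.e.\ $A_1\in\id(\bbQ_\lambda)$.

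\textbf{Main obstacle.} There is no deep obstacle once the right $A_1$ is identified, but the natural first attempt must be resisted: one is tempted to copy the classical construction of ${}^\lambda 2=$ meagre $\cup$ null from a decreasing sequence of ``small'' open dense sets, but a topologically open dense subset of ${}^\lambda 2$ is never $\bbQ_\lambda$‑null (this is essentially the content of \ref{p15}), so that route produces no $\bbQ_\lambda$‑null set at all. The correct $A_1$ is instead comeager‑but‑$\bbQ_\lambda$‑null, supplied by the block‑difference set of \ref{p15}(1). The only real bookkeeping point is rendering $A_1\in\id(\bbQ_\lambda)$ in the dense‑open form of Definition \ref{p1}: the naive dense sets ``$q$ forces block $B_k$ to be nonzero'' are not dense (a condition whose trunk already makes $B_k$ vanish cannot be extended into one), which is precisely why one works with the single dense open set $\cI$ of conditions forcing \emph{all sufficiently large} blocks to be nonzero.
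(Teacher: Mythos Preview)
Your proof is correct and follows the same approach as the paper: both take $A_0$ to be the set of $\lambda$-reals that are eventually nonzero on every block between consecutive inaccessibles, show this is $\lambda$-meagre, and show its complement is $\bbQ_\lambda$-null via the density argument from \ref{p15}. The only difference is packaging: the paper builds conditions $p_\eta\in\bbQ_\lambda$ (one for each $\eta\in{}^{\lambda>}2$, with trunk $\eta$) whose limit sets are the nowhere-dense pieces of $A_0$ and whose family is simultaneously predense and hence witnesses $A_1\in\id(\bbQ_\lambda)$, whereas you separate these two roles into your $C_j$'s and your dense open $\cI$.
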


\begin{PROOF}{\ref{p24}}
Let $\langle \kappa_i:i < \kappa\rangle$ list the inaccessibles 
$< \kappa$ in the increasing order and let 
\[\cI_{\kappa_{i+1}} = \big\{q \in\bbQ_{\kappa_{i+1}}:\ell g(\tr(q)) > 
\kappa_i\mbox{ and } \tr(q) \rest [\kappa_i,\ell g(\tr(q))\mbox{ is not
  constantly zero }\big\}.\]   
Clearly, $\cI_{\kappa_{i+1}}$ is an open dense subset of
$\bbQ_{\kappa_{i+1}}$. Now, for $\eta \in {}^{\kappa >}2$ let $p_\eta \in
\bbQ_\kappa$ be witnessed by $(\eta,\{\kappa_{i+1}:\kappa_i > \ell
g(\eta)\},\langle \Lambda_{\kappa_{i+1}}:\kappa_i > \ell g(\eta)\rangle$)
where $\Lambda_{\kappa_{i+1}} = \{\cI_{\kappa_{i+1}}\}$. Then
\begin{enumerate}
\item[(a)] $p_\eta$ indeed belongs to $\bbQ_\kappa$,
\item[(b)] $\tr(p_\eta) = \eta$,
\item[(c)] $p_\eta$ is a nowhere-dense subtree of ${}^{\kappa >}2$. 
\end{enumerate}
Let $A_0 = \bigcup\{{\lim}_\kappa(p_\eta):\eta \in {}^{\kappa >}2\}$, $A_1  
={}^\kappa 2 \backslash A_0$. Let us argue that they are as required. 

First, why does $A_1$ belong to $\id(\bbQ_\kappa)$?   Clearly $A_1$ is 
$\kappa$--Borel and for $p \in \bbQ_\kappa$ we shall prove $p \nVdash
``\name\eta \in A_1"$, this suffices.  Let $\nu = \tr(p)$, hence $p,p_\nu$
are compatible so let $q \in \bbQ_\kappa$ be a common upper bound.  Then $q 
\Vdash$ ``$\name\eta \in \lim_\kappa(q) \subseteq \lim_\kappa(p_\nu)
\subseteq A_0 = {}^\kappa 2 \backslash A_1$''.  

Second, why does $A_0\in\id(\Cohen_\kappa)$? Because it is the union of
$|{}^{\kappa>}2|=\kappa$ nowhere dense sets (remember clause (c)).  
\end{PROOF}

\begin{claim}
\label{p19}
1) [$\kappa$ weakly compact]  Any $\kappa$--Borel set $\bold B$ is
   equal modulo $\id(\bbQ_\kappa)$ to the union of $\le \kappa$ sets, 
each is $\kappa$-closed and even $\bbQ_\kappa$--basic, see Definition
   \ref{y8}(2).

\noindent
2) $\Borel_\kappa/\id(\bbQ_\kappa)$ is a $\kappa^+$--c.c. Boolean Algebra.
\end{claim}

\begin{PROOF}{\ref{p19}}
  1) We have $\id_1(\bbQ_\kappa) = \id_2(\bbQ_\kappa)$ by \ref{p4d}(5).  As
  $\bbQ_\kappa$ satisfies the $\kappa^+$-c.c. it is enough to show that for
  a dense set of $p \in \bbQ_\kappa$, we have that $\lim_\kappa(p)\subseteq
  \bold B$ or $\lim_\kappa(p)$ is disjoint from $\bold B$. But this easily
  holds by \ref{p8}(3).

\noindent
2) Should be clear.
\end{PROOF}

\begin{claim}
\label{p22}
[$\kappa$ weakly compact]
Assume $F$ is a $\kappa$--Borel function from ${}^\kappa 2$ to ${}^\kappa
2$. 

\noindent
For a dense set of $p \in \bbQ_\kappa$, the function $F$ can be read
continuously on $\lim_\kappa(p)$, i.e. for some club $C$ of $\kappa$ and
$\bar h = \langle h_\alpha:\alpha \in C\rangle$ we have:
\begin{enumerate}
\item[(i)]  $h_\alpha:p \cap {}^\alpha 2 \longrightarrow {}^\alpha 2$, 
\item[(ii)]  if $\eta \in p \cap {}^\alpha 2$, $\nu \in p \cap {}^\beta 2$,
  $\eta\vartriangleleft \nu$ and $\{\alpha,\beta\} \subseteq C$ then
  $h_\alpha(\eta) \vartriangleleft h_\beta(\nu)$, 
\item[(iii)]  if $\eta \in \lim_\kappa(p)$ then $F(\eta) =
\bigcup\{h_\alpha(\eta \rest \alpha):\alpha \in C\}$. 
\end{enumerate}
\end{claim}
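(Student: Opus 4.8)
\begin{PROOF}{\ref{p22}}
The plan is to run a $\kappa$-fusion over $\bbQ_\kappa$, in the spirit of the proof of \ref{n13}, but at stage $j$ we decide $F$ restricted to a slowly growing level $\gamma_j$ instead of a single ordinal value. First I would reduce: for $\gamma<\kappa$ and $\nu\in{}^\gamma 2$ set $A_{\gamma,\nu}=\{\xi\in{}^\kappa 2:F(\xi)\rest\gamma=\nu\}$, a $\kappa$-Borel set; the family $\{A_{\gamma,\nu}:\nu\in{}^\gamma 2\}$ partitions ${}^\kappa 2$ into $2^{|\gamma|}<\kappa$ pieces. By \ref{p19}(1) each $\{q:\lim_\kappa(q)\text{ decides }A_{\gamma,\nu}\}$ is dense open, so applying \ref{p8}(2) with these $<\kappa$ many dense (hence predense) sets, the set
\[
\cI_\gamma=\{p\in\bbQ_\kappa:(\exists\partial<\kappa)(\forall\eta\in p\cap{}^\partial 2)(\exists\nu\in{}^\gamma 2)(\lim_\kappa(p^{[\eta]})\subseteq A_{\gamma,\nu})\}
\]
is dense open in $\bbQ_\kappa$; this is exactly the place where weak compactness of $\kappa$ (through $\kappa$-boundingness, \ref{n13}) is used. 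For $p\in\cI_\gamma$ write $\partial(p,\gamma)$ for such a $\partial$; since the $A_{\gamma,\nu}$ are pairwise disjoint and meet every nonempty $\lim_\kappa(q)$, the $\nu$ attached to $\eta\in p\cap{}^{\partial(p,\gamma)}2$ is unique and is preserved under $\triangleleft$ at levels $\ge\partial(p,\gamma)$.

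Next I would build $\langle p_i:i<\kappa\rangle$ increasing in $\bbQ_\kappa$ together with full witnesses $(\varrho,S_i,\bar\Lambda_i,E_i)$ and an increasing continuous sequence $\langle\gamma_i:i<\kappa\rangle$, copying verbatim the bookkeeping of the proof of \ref{n13}: $p_j\cap{}^{\gamma_j\ge}2=p_i\cap{}^{\gamma_j\ge}2$ for $j<i$, $\gamma_i\in\bigcap_{j<i}E_j$, $\gamma_i\ge i$, $E_i\subseteq E_j$, and $S:=\bigcup_iS_i$ nowhere stationary with $|\Lambda_\partial|\le\partial$. At limit stages use $\kappa$-strategic completeness, i.e. \ref{n9}(2),(4). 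At a successor $i=j+1$, for each $\eta\in p_j\cap{}^{\gamma_j}2$ I pick (by density of $\cI_{\gamma_j}$ above $p_j^{[\eta]}$) some $q_\eta\in\cI_{\gamma_j}$ with $p_j^{[\eta]}\le q_\eta$, form the amalgam $p_i=\bigcup\{q_\eta:\eta\in p_j\cap{}^{\gamma_j}2\}$, which is again in $\bbQ_\kappa$ with $\tr(p_i)=\tr(p_j)$ and $p_i\cap{}^{\gamma_j\ge}2=p_j\cap{}^{\gamma_j\ge}2$ by the gluing argument of \ref{n13} (cf.\ \ref{n12}), and choose $\gamma_{j+1}\in\bigcap_{k\le j}E_k$ above $\gamma_j$ and above $\sup\{\partial(q_\eta,\gamma_j):\eta\in p_j\cap{}^{\gamma_j}2\}$. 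Then for every $\eta\in p_i\cap{}^{\gamma_{j+1}}2$ the condition $p_i^{[\eta]}$ decides $F\rest\gamma_j$, say $\lim_\kappa(p_i^{[\eta]})\subseteq A_{\gamma_j,\nu_{j,\eta}}$.

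Finally, set $p=\bigcap_{i<\kappa}p_i$; since the $p_i$ stabilize below every level, $p\in\bbQ_\kappa$ with the accumulated witness and $p\ge p_0$. Let $C=\{\gamma_i:i<\kappa\text{ a limit ordinal}\}$, a club of $\kappa$. For $\alpha=\gamma_i\in C$ and $\eta\in p\cap{}^\alpha 2$ define $h_\alpha(\eta)=\bigcup\{\nu_{j,\eta\rest\gamma_{j+1}}:j<i\}\in{}^\alpha 2$; this is well defined and of length $\sup_{j<i}\gamma_j=\alpha$, using coherence of the $\nu_{j,\cdot}$ (again disjointness and nonemptiness of the $A$'s, together with $p^{[\eta\rest\gamma_{j'+1}]}\le p^{[\eta\rest\gamma_{j+1}]}$). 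Monotonicity of $\bar h=\langle h_\alpha:\alpha\in C\rangle$ along $\triangleleft$ is then immediate, and for $\xi\in\lim_\kappa(p)$ we get $F(\xi)\rest\gamma_j=\nu_{j,\xi\rest\gamma_{j+1}}$ for all $j$, hence $F(\xi)=\bigcup\{h_\alpha(\xi\rest\alpha):\alpha\in C\}$. The hard part will be the two pieces imported from \ref{n13}: checking that the limit amalgam is a legitimate condition (nowhere-stationary $S$, $|\Lambda_\partial|\le\partial$) and that the successor amalgam $\bigcup_\eta q_\eta$ lies in $\bbQ_\kappa$ with the intended trunk; everything else is routine once $\cI_\gamma$ is known to be dense.
\end{PROOF}
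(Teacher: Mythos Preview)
Your argument is correct and is essentially a fully unpacked version of the paper's proof, which is a one-line sketch: the paper just says to apply \ref{b37} (equivalently the conclusion \ref{b27}, or in \S1 the machinery behind \ref{n13}) to the names $F(\name\eta)\rest\alpha$ for $\alpha<\kappa$, with $\cI_\alpha=\{p:p\text{ forces a value to }F(\name\eta)\rest\alpha\}$. Your explicit fusion over the $\cI_\gamma$'s, the gluing at successor stages, and the extraction of the club $C$ with the functions $h_\alpha$ are exactly what that invocation amounts to once spelled out; there is no genuine difference in strategy.
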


\begin{remark}
  This is parallel to ``every Borel function $F:[0,1] \longrightarrow [0,1]$
  can be approximated by step functions, that is functions such that for
  some finite partitions of $[0,1]$ to intervals, it is constant on each
  interval''.
\end{remark}

\begin{PROOF}{\ref{p22}}
By \ref{n13}(2), the set
\[\cI=\big\{q\in\bbQ_\kappa:(\forall\alpha<\kappa)(\exists\beta<\kappa)(\forall\nu\in
q\cap {}^\beta 2)(q^{[\nu]}\mbox{ forces a value to }F(\name\eta) \rest\alpha)\big\}\]
is an open dense subset of $\bbQ_\kappa$. 

Let us fix $q\in\cI$. Then by the definition of $\cI$ there are
an increasing sequence $\langle\beta(q,\alpha): \alpha<\kappa\rangle$ 
of ordinals below $\kappa$ and a sequence $\langle
g(q,\alpha):\alpha<\kappa\rangle$ of functions such that for each
$\alpha<\kappa$ we have 
\[g(q,\alpha):{}^{\beta(q,\alpha)} 2\longrightarrow {}^\alpha 2\quad\mbox{
  and }\quad \nu\in q\cap {}^{\beta(q,\alpha)} 2\ \Rightarrow\ q^{[\nu]}\Vdash\mbox{``}
F(\name\eta) \rest\alpha=g(q,\alpha)(\nu)\mbox{''.}\] 
Let $E_q=\{\delta<\kappa: \delta$ is a limit ordinal and $(\forall
\alpha<\delta) (\beta(q,\alpha)<\delta)\}$; clearly it is a club of
$\kappa$. For $\delta\in E_q$ we define a function
$h_{q,\delta}:q\cap {}^\delta2\longrightarrow q\cap {}^\delta 2$ by:
\[h_{q,\delta}(\nu)=\bigcup\big\{g(q,\alpha)(\nu\rest
\beta(q,\alpha)):\alpha<\delta\big\}\quad \mbox{ for }\nu\in q\cap {}^\delta
2.\] 
Clearly, for every $\delta\in E_q$ and $\nu\in {}^\delta 2$ we have
\begin{enumerate}
\item[$(\boxtimes)$] $q^{[\nu]}\Vdash$ `` $F(\name{\eta})\rest\delta=
  \bigcup\limits_{\alpha<\delta} (F(\name{\eta})\rest\alpha)=
  \bigcup\limits_{\alpha<\delta} g(q,\alpha)(\name{\eta}\rest
  \beta(q,\alpha))=h_{q,\delta}(\nu)$ ''. 
\end{enumerate}
For $\delta\in E_q$ and $\nu\in {}^\delta 2$ consider the set 
\[Y_{\delta,\nu}=\big\{\eta\in{\lim}_\kappa(q): \nu\vartriangleleft \eta\ \mbox{
  and } \ F(\eta)\rest\delta\neq h_{q,\delta}(\nu)\big\}.\] 
It is a $\kappa$--Borel set which (by $(\boxtimes)$) belongs to
$\id_1(\bbQ_\kappa)=\id_2(\bbQ_\kappa)$. Hence 
\[Y:=\bigcup\big\{Y_{\delta,\nu}:  \delta\in E_q\mbox{ and }\nu\in {}^\delta
2\big\} \in \id(\bbQ_\kappa).\]
Let $q^*\geq q$ be such that $\lim_\kappa(q^*)\cap Y=\emptyset$ (exists by
the proof of \ref{p19}(1)). Then $q^*, E_q, \langle h_{q,\delta}:\delta\in
E_q\rangle$ have the properties required in (i)--(iii) and the Claim
follows. 
\end{PROOF}

\begin{remark}
  \label{40C}
For $\kappa$ which is not weakly compact we may get a weaker result for
$\id_1(\bbQ_\kappa)=\id_2(\bbQ_\kappa)$. For each $\alpha<\kappa$ let
$\cI_\alpha$ be a maximal antichain of $\bbQ_\kappa$ such that 
\[q\in\cI_\alpha\quad\Rightarrow\quad q\mbox{ forces a value to }
F(\name{\eta})\rest \alpha.\] 
\Wilog \,
\begin{enumerate}
\item[$(*)_0$]  $\alpha < \beta \wedge q \in \cI_\beta \Rightarrow
  (\exists p \in \cI_\alpha)(p \le q)$
\end{enumerate}
Let $\langle q_{\alpha,i}:i<i(\alpha)\leq\kappa\rangle$ list $\cI_\alpha$ and
let $\nu_{\alpha,i}$ be such that $q_{\alpha,i}\Vdash$``$F(\name{\eta})
\rest \alpha=\nu_{\alpha,i}$''. Then clearly $\tr(q_{\alpha,j})
\trianglelefteq\tr(q_{\alpha,i})\in q_{\alpha,j}\Leftrightarrow i=j$. Let 
$Y_\alpha=\bigcup\limits_{i<i(\alpha)} \lim(q_{\alpha,i})$ and note that:
\begin{enumerate}
\item[$(*)_1$] 
\begin{enumerate}
\item[(a)] $Y_\alpha={}^\kappa 2\mod \id(\bbQ_\kappa)$ decreases with
  $\alpha$, and 
\item[(b)] $\langle \lim_\kappa(q_{\alpha,i}):i<i(\alpha)\rangle$ is a
    partition of $Y_\alpha$. 
\end{enumerate}
\end{enumerate}
Define $H_\alpha:Y_\alpha\longrightarrow {}^\alpha 2$ by
$H_\alpha(\eta)=\nu_{\alpha,i}$ if $\eta\in \lim_\kappa(q_{\alpha,i})$. Then
\begin{enumerate}
\item[$(*)_3$]
\begin{enumerate}
\item[(a)] $H_\alpha$ is continuous on $Y_\alpha$ in the sense that
$H_\alpha(\eta)$ is the value of $H_\alpha'(\eta\rest j)$ for every large
enough $j<\kappa$, where  
\item[(b)] we let $H_\alpha':{}^{\kappa>} 2\longrightarrow {}^{\kappa>} 2$
  be 
\[H_\alpha'(\nu)=\left\{\begin{array}{ll}
\nu_{\alpha,i}&\mbox{ if }\tr(q_{\alpha,i})\trianglelefteq \nu\in
q_{\alpha,i},\\
\langle (0)_\alpha\rangle & \mbox{ if there is no such }i.
\end{array}\right.\]
\end{enumerate}
\end{enumerate}
Now consider
\begin{enumerate}
\item[$(*)_4$]
\begin{enumerate}
\item[(a)] $Y=\bigcap\limits_{\alpha<\kappa} Y_\alpha$ and note $Y={}^\kappa
  2\mod\id(\bbQ_\kappa)$, and
\item[(b)] let $H:Y\longrightarrow {}^\kappa 2$ be defined by
  $H(\eta)=\lim\langle H_\alpha(\eta):\alpha<\kappa\rangle$. 
\end{enumerate}
\end{enumerate}
\end{remark}
\medskip

\noindent
Concerning Lebesgue Density Theorem:
\begin{conclusion}
\label{p23}
[$\kappa$ weakly compact]  If $X \subseteq {}^\kappa 2$ is
$\kappa$-Borel, \then \, for some $Y \in \id(\bbQ_\kappa)$ for every
$\eta \in X \backslash Y$ for every $\alpha < \kappa$ large enough
$(2^\kappa)^{[\eta \rest \alpha]} \cap X$ includes $\lim_\kappa(p)$ for some
$p \in \bbQ_\kappa$.
\end{conclusion}

\begin{remark}
So this holds also for the complement of $X$.
\end{remark}

\begin{PROOF}{\ref{p23}}
  By \ref{p8}(3) there is a maximal antichain $\langle p_i:i < i_*\rangle$
  of members of $\bbQ_\kappa$ and $S \subseteq i_*$ such that $i \in S
  \Rightarrow \lim_\kappa(p_i) \subseteq X$ and $i \in i_* \backslash S
  \Rightarrow \lim_\kappa(p_i) \cap X = \emptyset$.  Then $i_* < \kappa^+$ and
  let $Y = {}^\kappa 2 \backslash \bigcup\{\lim_\kappa(p_i):i < i_*\}$, so
  clearly $Y \in \id(\bbQ_\kappa)$.  If $\eta \in X \backslash Y$, then by
  the choice of $Y$ for some $i <i_*$, $\eta \in \lim_\kappa(p_i)$ and
  necessarily $i=i(\eta)$ is unique and $i\in S$. Let $\alpha(\eta)$ be
  ${\ell g}({\rm tr}( p_{i (\eta)}))$. Clearly we are done. 
\end{PROOF}

\begin{claim}
\label{p50}
If $\cI \subseteq \bbQ_\kappa$ is dense open and $W\subseteq \kappa =  
\sup(W)$ \then \, for some $\bar p = \langle p_\rho:\rho \in
\Omega\rangle$ we have:
\begin{enumerate}
\item[(a)]  $\Omega \subseteq {}^{\kappa >}2$, moreover $\Omega
  \subseteq \bigcup\{{}^\alpha 2:\alpha \in W\}$,
\item[(b)]  $p_\rho \in \cI \subseteq \bbQ_\kappa$ has trunk $\rho$
  for every $\rho \in \Omega$,
\item[(c)]  if $\rho \vartriangleleft \nu \in p_\rho$ then $\nu \notin
  \Omega$, 
\item[(d)]  $\{p_\rho:\rho \in \Omega\}$ is a predense subset of
  $\bbQ_\kappa$, moreover is a maximal antichain,
\item[(e)]  letting $(\rho_\rho,S_\rho,\bar\Lambda_\rho)$ witness
$p_\rho \in \bbQ_\kappa$ we have: if $\rho_1,\rho_2 \in \Omega$ and
$\ell g(\rho_1) \le \alpha = \ell g(\rho_2)$ then $S_{\rho_2} =
S_{\rho_1} \backslash (\alpha +1),\bar\Lambda_{\rho_2} =
\bar\Lambda_{\rho_1} \rest S_{\rho_2}$.
\end{enumerate}
\end{claim}

\begin{PROOF}{\ref{p50}}
Let $\Omega_1 =\{\tr(p):p \in \cI\}$ and for $\rho \in \Omega_1$
choose $p^1_\rho \in \cI$ such that $\tr(p^1_\rho) = \rho$ and let
$(\rho,S^1_\rho,\bar\Lambda^1_\rho)$ witness $p^1_\rho \in
\bbQ_\kappa$ with $\min(S^1_\rho)>\ell g(\rho)$. Note that 
\[\rho\in\Omega_1\ \wedge\ \rho \trianglelefteq\nu\in p^1_\rho\quad
\Rightarrow\quad \nu\in\Omega_1\]
because $\cI$ is open dense.  Let $S_* = \bigcup\{S^1_\rho:\rho \in
\Omega_1\}$ and note that $S_*$ is a nowhere stationary subset of
$\kappa$. Let $\bar\Lambda = \langle \Lambda_\partial:\partial \in
S_*\rangle$ where 
\[\Lambda_\partial = \bigcup\{\Lambda^1_{\rho,\partial}:\rho\mbox{ 
satisfies }\rho \in \Omega_1 \cap {}^{\partial >}2\mbox{ and }\partial
\in S^1_\rho\}.\]
Easily, if $\partial \in S_*$ then $\Lambda_\partial$ is a set of
$\le \partial$ dense subsets of $\bbQ_\partial$.  

Next, for $\rho \in \Omega_1$ let $p^2_\rho \in \bbQ_\kappa$ be
witnessed by $(\rho,S_*,\bar\Lambda)$.  Now we define
$\Omega_{2,\alpha}$ by induction on $\alpha\in W$ such that
\[\Omega_{2,\alpha} = \big\{\rho \in {}^\alpha 2:\rho \in
\Omega_1\mbox{ and if }\beta\in W\cap\alpha \wedge \varrho \in 
\Omega_{2,\beta} \wedge \varrho \triangleleft \rho\mbox{ then }\rho
\notin p^2_\varrho\big\}.\] 
Lastly, let $\Omega = \bigcup\limits_{\alpha\in W} \Omega_{2,\alpha}$ and
$p_\rho = p^2_\rho$ for $\rho \in \Omega$.  Now check.  
\end{PROOF}

\begin{claim}
\label{p59}
Assume that $\kappa$ is inaccessible limit of inaccessibles and
$W_\varepsilon\subseteq \kappa=\sup(W_\varepsilon)$ for $\varepsilon<\kappa$
are pairwise disjoint. If $A \in \id(\bbQ_\kappa)$ \then \, for some
$(S,\bar\Lambda),\bar p, \bar{\cI}$:
\begin{enumerate}
\item[(a)]  $\bar p = \langle p_\rho:\rho \in {}^{\kappa>} 2 \rangle$,
  $p_\rho \in \bbQ_\kappa$ is defined by $(\rho,S \backslash
  (\ell g(\rho) +1),\bar\Lambda \rest (S\setminus (\ell g(\rho)+1)))$,  
\item[(b)]  $\bar{\cI} = \langle \cI_\varepsilon:\varepsilon <
  \kappa\rangle$,
\item[(c)]  $\cI_\varepsilon \subseteq \{p_\rho:\rho \in {}^{\kappa
    >}2\ \wedge \ell g(\rho)\in W_\varepsilon\}$ is a predense set and even
  a maximal antichain of $\bbQ_\kappa$, 
\item[(d)]  $A \subseteq \bigcup\{{}^\kappa 2
  \backslash\set(\cI_\varepsilon): \varepsilon < \kappa\}$. 
\end{enumerate}
\end{claim}

\begin{PROOF}{\ref{p59}}
Follows by the proof of \ref{p50} but we give details. Let $A \in
\id(\bbQ_\kappa)$, hence there are a maximal antichains
$\cI_\varepsilon$ of $\bbQ_\kappa$ such that $A\subseteq
\bigcup\limits_{\varepsilon < \kappa} \big( {}^{\kappa}2\backslash
\set(\cI_\varepsilon)\big)$. As $\bbQ_\kappa$ satisfies the
$\kappa^+$--c.c. clearly $|\cI_\varepsilon| \le \kappa$. 

Recalling $\kappa = \sup(S^\kappa_{\inc})$ hence \wilog \, each $p \in
\cI_\varepsilon$ is nowhere-dense (see the proof of \ref{p24}) and hence
$|\cI_\varepsilon|=\kappa$. Let $\cI_\varepsilon = \{p_{\varepsilon,i}:i
<\kappa\}$ and suppose that each $p_{\varepsilon,i}$ is defined by 
$(\eta_{\varepsilon,i}, S_{\varepsilon,i},\bar\Lambda_{\varepsilon,i})$.
\Wilog \, $\partial \in S_{\varepsilon,i} \Rightarrow \ell
g(\eta_{\varepsilon,i}) < \partial$. Let
\begin{enumerate}
\item[$(*)_1$]  $S = \{\partial \in S^\kappa_{\inc}$: for some 
$\varepsilon,i < \partial$ we have $\partial \in
S_{\varepsilon,i}\}$. 
\end{enumerate}
Clearly,
\begin{enumerate}
\item[$(*)_2$]  $S$ is a nowhere stationary subset of
  $S^\kappa_{\inc}$.
\end{enumerate}
Let 
\begin{enumerate}
\item[$(*)_3$]  $\bar\Lambda = \langle \Lambda_\partial:\partial
  \in S\rangle$ where for $\partial \in S$ we let 
\[\Lambda_\partial=\bigcup\big\{\Lambda_{\varepsilon,i,\partial}:
\varepsilon< \partial, \  i<\partial \mbox{ and }\partial \in
S_{\varepsilon,i} \big\}.\]
\end{enumerate}
Clearly,
\begin{enumerate}
\item[$(*)_4$]  $(\langle \rangle,S,\bar\Lambda)$ defines a
  condition $p_*\in\bbQ_\kappa$, as $S \subseteq \kappa$ is nowhere
  stationary and if $\partial \in S$ then $\Lambda_\partial$ is a set
  of $\le \partial$ pre-dense subsets of $\bbQ_\partial$.
\end{enumerate}
Lastly,
\begin{enumerate}
\item[$(*)_5$]  
\begin{enumerate}
\item[(a)] for $\rho \in {}^{\kappa >}2$ let $p_\rho= (\rho,S
  \backslash (\ell g(\rho)+1), \bar\Lambda \rest (S \backslash (\ell
  g(\rho)+1)))$, 
\item[(b)] for $\varepsilon < \kappa$ let 
\[\cI'_\varepsilon =\big\{p_\rho: \mbox{for some }i < \kappa\mbox{ we
  have }i,\varepsilon<\ell g(\rho)\in W_\varepsilon\mbox{ and
}\eta_{\varepsilon,i} \trianglelefteq \rho \in p_{\varepsilon,i}\big\}.\] 
\end{enumerate}
\end{enumerate}
Then
\begin{enumerate}
\item[$(*)_6$]  for each $\varepsilon < \kappa$
\begin{enumerate}
\item[(a)]  $\cI'_\varepsilon$ is a predense subset of $\bbQ_\kappa$,
  and 
\item[(b)]  $\set(\cI'_\varepsilon) \subseteq \set(\cI_\varepsilon)$. 
\end{enumerate}
\end{enumerate}
[Why? For clause (a), if $q\in\bbQ_\kappa$ then some
$p\in\cI_\varepsilon$ is compatible with $q$ and hence there is $r\geq
q,p$. Let $i<\kappa$ be such that $p= p_{\varepsilon,i}$ and let
$\rho\in r$ be such that $\ell g(\rho)>\varepsilon, i,\ell
g(\tr(r))$ and $\ell g(\rho)\in W_\varepsilon$. Now,
$p=p_{\varepsilon,i}\leq r$ implies $\eta_{\varepsilon,i}=\tr(p)
\trianglelefteq \tr(r)\trianglelefteq \rho \in r\subseteq
p_{\varepsilon,i}$. Hence $p_\rho\in\cI_\varepsilon'$ has trunk $\rho$ and
hence it is compatible with $r$, so also with $q$. Concerning clause (b),
assume  $\eta\in \set(\cI_\varepsilon')\subseteq {}^\kappa 2$. Then for some
$\rho\in {}^{\kappa>}2$ we have $p_\rho\in\cI_\varepsilon'$ and $\eta \in
\lim_\kappa(p_\rho)$. By the definition of $\cI_\varepsilon'$, for  some
$i<\ell g(\rho)$ we have $\eta_{\varepsilon,i}\trianglelefteq \rho \in
p_{\varepsilon,i}$. Hence $\tr(p_\rho)\in p_{\varepsilon,i}$. By the choice
of $p_\rho$, clearly $\lim_\kappa(p_\rho)\subseteq \lim_\kappa
(p^{[\rho]}_{\varepsilon,i}) \subseteq\lim_\kappa(p_{\varepsilon,i})
\subseteq \set(\cI_\varepsilon)$, so we are done.]   

To get ``$\cI_\varepsilon$ a maximal antichain'' we choose
$\Omega_{\varepsilon,j} \subseteq {}^j 2$ by induction on $j\in
W_\vare\setminus (\vare+1)$ by:   
\begin{enumerate}
\item[$(*)_7$]  $\Omega_{\varepsilon,j} = \big\{\rho \in {}^j 2:
\text{ for some } i\in W_\vare\cap j\setminus (\vare+1),\
\eta_{\varepsilon,i} \trianglelefteq \rho \in p_{\varepsilon,i} \text{ but
  for no } i_1 \in W_\vare \cap j\setminus (\vare+1) \text{ and } \nu \in 
\Omega_{\varepsilon,i_1} \text{ do we have } \rho \in p_\nu\big\}$.
\end{enumerate}
Then let
\begin{enumerate}
\item[$(*)_8$]  
\begin{enumerate}
\item[(a)] $\Omega_\varepsilon = \bigcup\{\Omega_{\varepsilon,j}: j\in
  W_\vare\setminus (\vare+1)\}$,   
\item[(b)] $\cI''_\varepsilon = \{p_\rho:\rho \in
  \Omega_\varepsilon\}$. 
\end{enumerate}
\end{enumerate}
Now $(S,\Lambda),\langle p_\rho:\rho\in\Omega_\varepsilon\rangle$ and 
$\langle \cI''_\varepsilon:\varepsilon <\kappa\rangle$ are as required.
\end{PROOF}

\section{On $\add(\bbQ_\kappa)$ and $\cf(\bbQ_\kappa)$}
\label{addQ}
\begin{definition}
\label{p84}
\begin{enumerate}
\item  For $\alpha<\kappa$, $\nu \in {}^\alpha 2$, $p\in\bbQ_\kappa$, $\eta
  \in p\cap {}^\alpha 2$ we let  
\[p^{[\eta,\nu]} = \big\{\rho:\rho \trianglelefteq
   \nu\mbox{ or for some $\varrho$ we have }\eta \char 94 \varrho \in p 
   \wedge \rho = \nu \char 94 \varrho\big\}.\] 
\item For $\cI \subseteq \bbQ_\kappa$, $\alpha<\kappa$ and a permutation 
$\pi$ of ${}^\alpha 2$ let 
\[\cI^{[\alpha,\pi]} = \big\{p^{[\eta,\nu]}:p
 \in \cI,\eta \in p \cap {}^\alpha 2\mbox{ and }\nu = \pi(\eta)\big\}.\]
\item Let $\Lambda$ be a collection of subsets of $\bbQ_\kappa$ and let
  $\alpha<\kappa$. For a permutation $\pi$ of ${}^\alpha 2$ we let
\[\Lambda^{[\alpha,\pi]} = \{\cI^{[\alpha,\pi]}:\cI \in \Lambda\}.\] 
We also define 
\[\Lambda^{[\alpha]}=\{\cI^{[\alpha,\pi]}:\pi\mbox{ is a permutation of }
{}^\alpha 2\mbox{ and }\cI \in \Lambda\}\] 
and $\Lambda^{[<\alpha]}=\bigcup\{\Lambda^{[\beta]}:\beta < \alpha\}$, here
we allow $\alpha = \kappa$.  
\end{enumerate}
\end{definition}

\begin{claim}
\label{p87}
\begin{enumerate}
\item If $\alpha < \kappa$ and $\cI \subseteq \bbQ_\kappa$ is
  open/dense/predense/maximal antichain/of cardinality $\le \kappa$ \then 
  \, so is $\cI^{[\alpha,\pi]}$ in $\bbQ_\kappa$.
\item If $\alpha < \kappa$ and $\Lambda$ is a collection of subsets of
  $\bbQ_\kappa$, \then \, 
\begin{itemize}
\item $\big(\Lambda^{[\alpha]}\big)^{[\alpha]} = \Lambda^{[\alpha]}$ and
  $|\Lambda^{[\alpha]}| \le |\Lambda| + 2^{2^{|\alpha|}} + \aleph_0 \le 
  |\Lambda| + \kappa$,
\item $\big(\Lambda^{[<\alpha]}\big)^{[<\alpha]} = \Lambda^{[<\alpha]}$ and
  $|\Lambda^{[<\alpha]}| \le |\Lambda| + \Sigma\{2^{2^{|\beta|}}:\beta <
  \alpha\} \le |\Lambda| + \kappa$.   
\end{itemize}
\end{enumerate}
\end{claim}

\begin{PROOF}{\ref{p87}}
Easy.
\end{PROOF}

\begin{definition}
  \label{p36a}
  \begin{enumerate}
  \item For an inaccessible cardinal $\kappa$ let $\Pr(\kappa)$ mean:

{\em there are predense sets $\cI_\vare\subseteq \bbQ_\kappa$ for 
  $\vare<\kappa$ such that 

if $p\in\bbQ_\kappa$ \then\; $\lim_\kappa(p)\nsubseteq
\bigcap\limits_{\vare<\kappa} \set(\cI_\vare)$.}
\item Let $S^\kappa_\pr=\{\partial<\kappa: \partial\in S^\kappa_\inc\
  \wedge\ \Pr(\partial)\}$ and 
\[\nst^\pr_\kappa=\nst_{\kappa,\pr}=\{S\subseteq
  S^\kappa_\inc: S\mbox{ is nowhere stationary and }S\subseteq
  S^\kappa_\pr\}.\] 
  \end{enumerate}
\end{definition}

\begin{observation}
\label{obs30}
  \begin{enumerate}
\item  If $\kappa$ is inaccessible but it is not a Mahlo cardinal, then 
  $\Pr(\kappa)$.  
\item If $\kappa$ is weakly compact, then $\neg\Pr(\kappa)$. 
\item If $\kappa=\sup(S^\kappa_\inc)$,  then $\kappa=\sup(S^\kappa_\pr)$. 
\item If $\kappa$ is Mahlo, i.e., $S^\kappa_\inc$ is a stationary subset of
  $\kappa$, then $S^\kappa_\pr$ is a stationary subset of $\kappa$. 
  \end{enumerate}
\end{observation}

\begin{PROOF}{\ref{obs30}}
(1)\quad First assume $\theta=\sup(S^\kappa_\inc)<\kappa$. For
$\vare<\kappa$ define
\[\cI_\vare=\big\{\big({}^{\kappa>}2\big)^{[\nu\char 94\langle 0\rangle]}:
\nu\in {}^{\kappa>}2\ \wedge\ \ell g(\nu)>\vare\big\}.\]
It should be clear that each $\cI_\vare$ is a predense subset of
$\bbQ_\kappa$ and we claim that they witness $\Pr(\kappa)$. So suppose
that $p\in\bbQ_\kappa$ and pick $\nu\in p$ of length 
greater than $\theta$ and than $\ell g(\tr(p))$; note that then 
$p^{[\nu]}=({}^{\kappa>}2)^{[\nu]}$. Let $\eta\in {}^\kappa2$ be such that 
  $\nu\vartriangleleft \eta$ and $\eta(i)=1$ for $i\in [\ell
  g(\nu),\kappa)$. Clearly, $\eta\in\lim_\kappa(p)$ but $\eta\notin
  \set(\cI_\vare)$ for $\vare>\ell g(\nu)$. 

Second, assume $\kappa=\sup(S^\kappa_\inc)$ but it is not Mahlo. Let $E$ be
a club of $\kappa$ disjoint from $S^\kappa_\inc$ and let $\langle
\alpha_i:i<\kappa\rangle$ be the increasing enumeration of $E$. For
$\vare<\kappa$ let
\[\cI_\vare=\big\{\big({}^{\kappa>}2\big)^{[\nu\char 94\langle 0\rangle]}:
\nu\in {}^{\alpha_i}2\ \wedge\ i>\vare\big\}.\]
Clearly, each $\cI_\vare$ is a predense subset of $\bbQ_\kappa$. We will
argue that they witness $\Pr(\kappa)$. Let $p\in\bbQ_\kappa$ and fix $\vare$
such that $\alpha_\vare>\ell g(\tr(p))$. By induction on $i\in
[\vare,\kappa)$ choose $\nu_i\in {}^{\alpha_i}2\cap p$ so that 
\begin{itemize}
\item if $\vare\leq j<i<\kappa$ then $\nu_j\char 94 \langle 1\rangle
  \trianglelefteq \nu_i$.
\end{itemize}
(It is clearly possible; at successor stages remember \ref{n9}(1) and at
limit stages remember the choice of $E$.) Then $\eta:=\bigcup\{\nu_i:
\vare\leq i<\kappa\}\in\lim_\kappa(p)$ does not belong to
$\set(\cI_\vare)$. 
\medskip

\noindent (2)\quad Remember Claim \ref{p8}(2).   
\medskip

\noindent (3,4)\quad Follow from part (1).
\end{PROOF}

\begin{question}
For which inaccessible cardinals $\kappa$ do we have $\Pr(\kappa)$? See
\cite{Sh:F1580}. 
\end{question}

\begin{claim}
\label{p36cSUB}
The following are equivalent for $\kappa$:
\begin{enumerate}
\item[(a)] $\neg\Pr(\kappa)$.
\item[(b)] If $\Lambda$ is a set of $\leq\kappa$ maximal antichains of
  $\bbQ_\kappa$ and $\alpha<\kappa$, \then\, there is $p\in\bbQ_\kappa$ such
  that $\tr(p)=\langle\rangle$, $S_p\cap \alpha=\emptyset$ and
  $\lim_\kappa(p) \subseteq \set(\Lambda)$. 
\end{enumerate}
\end{claim}

\begin{PROOF}{\ref{p36cSUB}}
(b) $\Rightarrow$ (a)\quad Straightforward by Definition \ref{p36a}(1). 

\noindent (a) $\Rightarrow$ (b)\quad Suppose that $\Pr(\kappa)$ does
not hold.
 
Assume $\Lambda$ is a set of $\leq\kappa$ maximal antichains of
$\bbQ_\kappa$. Let $\Lambda_1=\Lambda^{[<\kappa]}$ (see \ref{p84}). Then
$\Lambda_1=(\Lambda_1)^{[<\kappa]}$ and $|\Lambda_1|\leq \kappa$ (remember
\ref{p87}). Since $\Pr(\kappa)$ fails, there is a condition
$q\in\bbQ_\kappa$ such that $\lim(q)\subseteq \set(\Lambda_1)$ and ${\ell
  g}(\tr(q))>\alpha$, $S_q\cap\alpha=\emptyset$.

Let $S_p=S_q$ and for $\partial\in S_p$ let $\Lambda_\partial=
\Lambda_{q,\partial}$. Put $\bar{\Lambda} =\langle
\Lambda_\partial: \partial\in S_p\rangle$ and let $p$ be the condition
determined by $(\langle\rangle, S_p,\bar{\Lambda})$. 

Note that if $\eta\in q\cap {}^\beta 2$, $\beta<\kappa$, then for every
$\nu\in {}^\beta 2$ also   $q^{[\eta,\nu]}$ satisfies $\lim(q^{[\eta,\nu]})
\subseteq \set(\Lambda_1)$ by the choice of $\Lambda_1$. Therefore we also
get $\lim(p) \subseteq \set(\Lambda_1) \subseteq \set(\Lambda)$, so $p$ is
as required.   
\end{PROOF}

\begin{claim}
\label{p36cTWO}
Suppose that $p\in\bbQ_\kappa$, $\ell g(\tr(p))<\alpha_*\leq
\beta_*\leq \kappa$. \Then\, there is $q\in\bbQ_\kappa$ such that 
\begin{enumerate}
\item[(a)] $p\leq q$, $\tr(q)=\tr(p)$ and
\item[(b)] $S_q\setminus (\alpha_*,\beta_*)=S_p\setminus
  (\alpha_*,\beta_*)$ and $\gamma \in S_q \setminus
  (\alpha_*,\beta_*) \Rightarrow \Lambda_{q,\gamma} =
  \Lambda_{p,\gamma}$,  
\item[(c)] $S_q\cap (\alpha_*,\beta_*)\subseteq S^\kappa_\pr$. 
\end{enumerate}
\end{claim}

\begin{PROOF}{\ref{p36cTWO}}
We prove this by induction on $\beta_*$.
\medskip

\noindent\underline{Case 0}:\quad $\alpha_*=\beta_*$ or $\alpha_*+1=
\beta_*$\\ 
Trivial, as then $(\alpha_*,\beta_*)=\emptyset$.
\medskip

\noindent\underline{Case 1}:\quad $\beta_*=\sup(\beta_*\cap S_p)+1$
but $\sup(\beta_*\cap S_p)\notin S_p\setminus S^\kappa_\pr$.\\ 
Let $\gamma_*=\sup(\beta_*\cap S_p)$. Use the inductive hypothesis for
$p$ and $(\alpha_*,\gamma_*)$ to get a condition $q$. It will satisfy
the demands for $(\alpha_*,\beta_*)$ as well as either $\gamma_*\notin
S_p$ or else $\gamma_*\in S^\kappa_\pr$. 
\medskip

\noindent\underline{Case 2}:\quad $\beta_*>\sup(\beta_*\cap S_p)+1$\\
Use the inductive hypothesis for $\gamma_*=\sup(\beta_*\cap S_p)+1$,
proceeding like in Case 1. 
\medskip

\noindent\underline{Case 3}:\quad $\beta_*=\sup(\beta_*\cap S_p)$, so
$\beta_*$ is limit\\
Pick an increasing continous sequence $\bar{\alpha}=\langle\alpha_i:
i\leq \cf(\beta_*) \rangle$ such that $\alpha_0=\alpha_*$,
$\alpha_{\cf(\beta_*)}=\beta_*$ and $\alpha_i\notin S_p$ for all
$0<i<\cf(\beta_*)$. By induction on $i\leq\cf(\beta_*)$ choose $q_i$
such that  
\begin{enumerate}
\item[(a)] $q_0=p$, $\tr(q_i)=\tr(p)$,
\item[(b)] $S_{q_i}\setminus (\alpha_0,\alpha_i)=S_p\setminus
  (\alpha_0,\alpha_i)$ and $\gamma \in S_{q_i} \setminus
  (\alpha_0,\alpha_i)\Rightarrow \Lambda_{{q_i},\gamma} =
  \Lambda_{p,\gamma}$,  
\item[(c)] if $j<i$, then $q_j\leq q_i$, $S_{q_i}\setminus
  (\alpha_j,\alpha_i)=S_{q_j}\setminus (\alpha_j,\alpha_i)$ and $\gamma \in S_{q_i} \setminus
  (\alpha_j,\alpha_i)\Rightarrow \Lambda_{{q_i},\gamma} =
  \Lambda_{q_j,\gamma}$,  
\item[(d)] if $i=j+1$, then $S_{q_i}\cap (\alpha_j,\alpha_i)\subseteq
  S^\kappa_\pr$. 
\end{enumerate}
There are no problems in carrying out the inductive construction. Then
$q_{\cf(\beta_*)}$ is as required. 
\medskip

\noindent\underline{Case 4}:\quad $\beta_*=\partial+1$, $\partial\in
S_p \setminus S^\kappa_\pr$ and $\partial>\alpha_*$\\ 
Here we use \ref{p36cSUB} for $\bbQ_\partial$, $\Lambda_{p,\partial}$ and 
the ordinal $\alpha_*$. So there is $p_*\in \bbQ_\partial$ such that 
\begin{itemize}
\item $\tr(p_*)=\langle\rangle$,
\item $S_{p_*}\subseteq (\alpha_*,\partial)$, and 
\item $\lim(p_*)\subseteq \set(\Lambda_{p,\partial})$.
\end{itemize}
Now we define a condition $q_1$ by letting:
\begin{itemize}
\item $\tr(q_1)=\tr(p)$,
\item $S_{q_1}=(S_p\setminus \{\partial\})\cup S_{p_*}$,
\item $\Lambda_{q_1,\theta}$ is 
  \begin{itemize}
\item $\Lambda_{p,\theta}$ if $\theta\in S_p\setminus S_{p_*}$,
\item $\Lambda_{p_*,\theta}$ if $\theta\in S_{p_*}\setminus S_p$,
\item $\Lambda_{p,\theta}\cup \Lambda_{p_*,\theta}$ if $\theta\in
  S_{p_*}\cap S_p$.
  \end{itemize}
\end{itemize}
Then we continue as in Case 1 with $q_1,\alpha_*,\beta_*$ (as 
$\partial\notin S_{q_1}$). 
\end{PROOF}

\begin{conclusion}  
\label{q35}
For any $\alpha < \kappa$, the set 
$\{p \in \bbQ_\kappa:S_p \subseteq S^\kappa_{\pr} \setminus \alpha\}$ is
dense in $\bbQ_\kappa$.
\end{conclusion}

Note that if $\kappa>\sup(S^\kappa_\inc)$, then $\id(\bbQ_\kappa)=
\id(\Cohen_\kappa)$. Therefore:

\begin{hypothesis}
For the rest of this section we assume that $\kappa=\sup(S^\kappa_\inc)$ (so
also $\kappa=\sup(S^\kappa_\pr)$, remember \ref{obs30}(3)). 
\end{hypothesis}

\begin{definition}
  \label{p36e}
\begin{enumerate}
\item Let $\add(\nst^\pr_\kappa)$ be the minimal cardinal $\mu$ such that
  there are $S_\zeta\in\nst^\pr_\kappa$ for $\zeta<\mu$ with the property
  that there is no $S\in \nst^\pr_\kappa$ satisfying 
\[\zeta<\mu\quad\Rightarrow\quad S_\zeta\subseteq S\mod \mbox{ bounded}.\] 
Dually, $\cf(\nst^\pr_\kappa)$ is the minimal cardinal $\mu$ such that
  there are $S_\zeta\in\nst^\pr_\kappa$ for $\zeta<\mu$ with the property
  that for every $S\in \nst^\pr_\kappa$ there is $\zeta<\mu$ satisfying 
 $S\subseteq S_\zeta\mod \mbox{ bounded}$. 
\item For $S\subseteq S^\kappa_\inc$ we define:
\begin{enumerate}
\item[(a)] $\bbQ^*_{\kappa,S}$ is the subforcing of $\bbQ_\kappa$
  consisting of all conditions $p\in\bbQ_\kappa$  satisfying $S_p\subseteq S$. 
\item[(b)] $\id[\bbQ^*_{\kappa,S}]$ is the collection of all $A
  \subseteq {}^\kappa 2$ such that for some $\bar{\cJ}=\langle
  \cJ_\zeta: \zeta<\kappa\rangle$ we have 
  \begin{enumerate}
\item each $\cJ_\zeta$ is predense subset (or maximal antichain)
    of $\bbQ_\kappa$,
\item $\cJ_\zeta\subseteq\bbQ^*_{\kappa,S}$ for each
  $\zeta<\kappa$, and 
\item $A\subseteq \bigcup\limits_{\zeta<\kappa} \big( {}^\kappa2 \setminus 
  \set(\cJ_\zeta)\big)$,  
  \end{enumerate}
\item[(c)] $\add(\id[\bbQ^*_{\kappa,S}],\id(\bbQ_\kappa))=\min\{|\cA|:
  \cA\subseteq\id[\bbQ^*_{\kappa,S}]\ \wedge\ \bigcup\cA\notin
  \id(\bbQ_\kappa)\}$. 
\end{enumerate}
\item $\Add^*_{\pr,\kappa}=\min\big\{\add(\id[\bbQ^*_{\kappa,S}],
  \id(\bbQ_\kappa)): S\in\nst^\pr_\kappa\big\}$.
\end{enumerate}
\end{definition}

\begin{claim}
\label{p36hx}
\begin{enumerate}
\item $\add(\bbQ_\kappa)=\min\big\{\add(\nst^\pr_\kappa),
  \Add^*_{\pr,\kappa} \big\}$. 
\item $\cf(\bbQ_\kappa) \ge \cf(\nst^{\pr}_\kappa)$. 
\end{enumerate}

\end{claim}

\begin{PROOF}{\ref{p36hx}}
(1)\quad (Step 1)\quad \underline{$\add(\bbQ_\kappa)\leq
  \add(\nst^\pr_\kappa)$}. 
\smallskip

\noindent
Let $S_\zeta\in\nst^\pr_\kappa$ for $\zeta<\add(\nst^\pr_\kappa)$ be such that
\[S\in\nst_\kappa^\pr\quad \Rightarrow \quad \bigvee_\zeta
\kappa=\sup(S_\zeta\setminus S).\] 
For $\partial\in S^\kappa_\pr$ let $\Lambda_\partial^*=\{\cI_\vare^\partial: 
\vare<\partial\}$ witness $\partial\in  S^\kappa_\pr$ (see Definition
\ref{p36a}(1)). For $\zeta<\add(\nst^\pr_\kappa)$ let\footnote{Recall that
  ``$\forall^\infty\partial \in S$'' means ``for all but boundedly many
  $\partial \in S$''.}  
\[\bB_\zeta={}^\kappa2\setminus \big\{\eta\in {}^\kappa 2:
\big(\forall^\infty \partial\in S_\zeta\big)\big(\eta\rest \partial \in
\set(\Lambda^*_\partial)\big)\big\}.\]
Clearly $\bB_\zeta\in\id(\bbQ_\kappa)$. Now it suffices to prove that
$\bB:=\bigcup\big\{\bB_\zeta: \zeta<\add(\nst^\pr_\kappa)
\big\}\notin\id(\bbQ_\kappa)$. So suppose towards a contradiction that
$\bB\in\id(\bbQ_\kappa)$ and let $(S,\bar{\Lambda},\bar{p},\bar{\cI})$ be
given by Claim \ref{p59} for $\bB$. Next,
\begin{enumerate}
\item[$(*)_1$]  if $\vare<\kappa$, $\alpha<\kappa$ and $\eta\in {}^\alpha
  2$, \then \, there are $\beta,\nu,\rho$ such that 
\begin{enumerate}
\item[(a)]  $\alpha<\beta<\kappa$,
\item[(b)]  $\eta\vartriangleleft \nu \in {}^\beta 2$, 
\item[(c)]  $p_\rho \in \cI_\vare$ and $\rho \trianglelefteq \nu$  and $\nu
  \in p_\rho$, 
\item[(d)]  if $\partial\in S\cap (\alpha,\beta]$ then $\nu\rest \partial
  \in \set(\Lambda_\partial)$. 
\end{enumerate}
\end{enumerate} 
[Why?  Consider the triple $(\eta,S \backslash (\alpha +1),\langle
\Lambda_\partial:\partial \in S \backslash (\alpha +1)\rangle)$.  It defines
the condition $p_\eta \in \bbQ_\kappa$ and we know that $\cI_\vare$ is a
predense subset of $\bbQ_\kappa$.  Hence for some $\rho \in {}^{\kappa>}2$,
$p_\rho\in\cI_\vare$ and the conditions $p_\rho,p_\eta$ are compatible in 
$\bbQ_\kappa$. Then there is $\nu \in {}^{\kappa >}2$ such that $\tr(p_\rho)
\vartriangleleft \nu \in p_\rho$, $\tr(p_\eta)\vartriangleleft \nu \in
p_\eta$.  By the definition of $p_\eta$ above, $\ell g(\nu),\nu,\rho$
satisfy all the requirements.]  

Now, 
\begin{enumerate}
\item[$(*)_2$]  For $\vare<\kappa$ let $F^\vare_1,F^\vare_2:{}^{\kappa >}2
  \longrightarrow {}^{\kappa >}2$ be such that for each $\eta \in {}^{\kappa 
    >}2$, the triple $(\beta,\nu,\rho)$ given by $\beta = \ell
  g(F^\vare_1(\eta))$, $\nu = F^\vare_1(\eta)$ and $\rho = F^\vare_2(\eta)$,
  is as required above in $(*)_1$ for $\vare$ and $\eta$.
\item[$(*)_3$]  Let $E_1 = \{\delta < \kappa:\delta$ a limit
  ordinal and $(\vare<\delta\ \wedge\ \eta \in {}^{\delta >}2) \Rightarrow
  F^\vare_1(\eta) \in {}^{\delta >}2\}$.
\end{enumerate}
By the choice of $\langle S_\zeta:\zeta<\add(\nst^\pr_\kappa)\rangle$ there
is $\zeta< \add(\nst^\pr_\kappa)$ such that $S_\zeta\setminus S$ is
unbounded in $\kappa$. Easily we may choose an unbounded set  $S'\subseteq
S_\zeta\setminus S$ such that 
\begin{itemize}
\item the closure $E$ of $S'$ is disjoint from $S$, and
\item if $\gamma_0\in E$, $\gamma_1=\min(E\setminus(\gamma_0+1))$, then 
  $(\gamma_0,\gamma_1)\cap E_1\neq \emptyset$.
\end{itemize}
Let $\langle\gamma_i:i<\kappa\rangle$ list $E\cup\{0\}$ in the increasing
order (so $\gamma_{i+1}\in S_\zeta\setminus S$ and $\gamma_i\notin S$;
remember $\gamma_0=0\notin S\subseteq S^\kappa_\inc$).  By induction on
$i<\kappa$ we choose $\eta_i\in {}^{\gamma_i}2$ such that 
\begin{enumerate}
\item[(a)] $j<i<\kappa\ \Rightarrow\ \eta_j\vartriangleleft \eta_i$,
\item[(b)] if $i=j+1$ then $\eta_i\notin \set(\Lambda^*_{\gamma_i})$ and
  $F^j_1(\eta_j)\trianglelefteq \eta_i$, 
\item[(c)] if $\partial\in S\cap (\gamma_i+1)$, $i>0$, then
  $\eta_i\rest\partial \in \set(\Lambda_\partial)$,
\item[(d)] if $j<i$ then $F^j_2(\eta_j)\trianglelefteq \eta_i\in
  p_{F^j_2(\eta_j)}$ (follows from (b)+(c) and $(*)_2$).   
\end{enumerate}
If we succeed in carrying out the induction, then we may let
$\eta=\bigcup\limits_{i<\kappa} \eta_i$ and note that  
\begin{itemize}
\item $\eta$ belongs to $\bB_\zeta$ because $\eta\rest\gamma_i \notin 
  \set(\Lambda^*_{\gamma_i})$ for all successor $i<\kappa$ by clause (b),
\item $\eta$ does not belong to $\bB$ by clauses (c)+(d).
\end{itemize}
Consequently, $\eta$ witnesses $\bB_\zeta\nsubseteq\bB$, a contradiction. 
\medskip

\noindent
\underline{Why can we carry out the induction?}\\
For $i=0$ it is trivial.

\noindent
For a limit $i<\kappa$ we let $\eta_i=\bigcup\limits_{j<i}\eta_j$.

\noindent
Let $i=j+1$. First, $F^j_1(\eta_j)$ satisfies the requirements on $\eta_i$
except that $\ell g(F^j_1(\eta_j))$ is not $\gamma_i$ (and so ``$ \eta _i
\notin\set(\Lambda^*_{\gamma _i})$'' from (b) is meaningless): it is
$<\gamma_i$ by the choices of $E_1$ and $E$. 

Second, we use the definition of $S_\zeta\subseteq S^\kappa_\pr$ and
$\gamma_{j+1}\in S_\zeta\setminus S$ for the condition with trunk
$F^j_1(\eta_j)$ and $\langle \Lambda_\partial:\partial\in
(\gamma_j,\gamma_{j+1})\cap S\rangle$ and the choice of
$\Lambda^*_{\gamma_i}$. 
\medskip

This completes the proof of ``$\add(\bbQ_\kappa)\leq
\add(\nst^\pr_\kappa)$''. 
\medskip

\noindent (Step 2)\quad \underline{$\add(\bbQ_\kappa)\leq 
  \Add^*_{\pr,\kappa}$}.
\smallskip

\noindent It should be obvious that if $S\subseteq  
S^\kappa_\pr$ then $\add(\bbQ_\kappa)\leq
\add(\id[\bbQ^*_{\kappa,S}],\id(\bbQ_\kappa))$. 
\medskip

\noindent (Step 3)\quad\underline{$\min\big\{ \add(\nst^\pr_\kappa),
  \Add^*_{\pr,\kappa}\big\}\leq \add(\bbQ_\kappa)$}.  
\smallskip

\noindent
Why?  Assume $A_i \in \id(\bbQ_\kappa)$ for $i<i_* <\min 
\big\{\add(\nst^\pr_\kappa),\Add^*_{\pr,\kappa}\big\}$.  For each $i$ let 
$(S_i,\bar\Lambda_i,\bar{\cI}_i,\bar p_i)$ be given by Claim \ref{p59} for
$A_i$.  By Conclusion \ref{q35} (and the proof of \ref{p59}) we may also
require that  $S_i \in\nst^{\pr}_\kappa$ for all $i < i_*$.  As $i_* <
\add(\nst^\pr_\kappa)$ there is $S \in \nst^{\pr}_\kappa$ such that 
\[i < i_* \quad\Rightarrow\quad S_i \subseteq S \mod J^{\bd}_\kappa.\] 
Then easily $A_i \in \id[\bbQ^*_{\kappa,S}]$ for every $i < i_*$.  Since
$i_* < \Add^*_{\pr,\kappa}$ we also have $i_* < \add(\id[\bbQ^*_{\kappa,S}], 
\id(\bbQ_\kappa))$ and hence $\bigcup\limits_{i<i^*}A_i \in
\id(\bbQ_\kappa)$ and we are done.
\medskip

\noindent (2)\quad  In order to show $\cf(\bbQ_\kappa) \ge
\cf(\nst^{\pr}_\kappa)$ let us assume towards contradiction that $\mu := 
\cf(\bbQ_\kappa) < \cf(\nst^{\pr}_\kappa)$. Let $\langle \bold
B_\zeta:\zeta < \mu\rangle$ witness $\mu = \cf(\bbQ_\kappa)$ and let  
$S_\zeta,\bar\Lambda_\zeta$, $\bar{p}_\zeta=\langle p_{\zeta,\rho}:\rho\in
{}^{\kappa>} 2\rangle$ and $\bar{\cI}_\zeta
=\{\cI_{\zeta,i}:i<\kappa\}$ be given by \ref{p59} for $\bold 
B_\zeta$. Let $S \in \nst^{\pr}_\kappa$ be such that 
\[\zeta < \mu\quad \Rightarrow\quad \kappa = \sup(S \backslash
S_\zeta).\] 
For each $\partial \in S$ let
$\Lambda^*_\partial= \{\cI^\partial_\vare: \vare<\partial\}$ witness
$\partial\in S^\kappa_\pr$ (see Definition \ref{p36a}(1)) and let 
\[{\bold B}:=\{\eta\in {}^\kappa 2:(\exists^\infty \partial \in
S)(\exists \vare<\partial)(\eta\rest\partial \notin
\set(\cI^\partial_\vare))\}.\]
Clearly ${\bold B}\in\id(\bbQ_\kappa)$, so for some $\zeta < \mu$ we
have $\bold B \subseteq \bold B_\zeta$. Let $E\subseteq
\kappa\setminus S_\zeta$ be a club and let $p\in\bbQ_\kappa$ be a
condition determined by $(\langle\rangle,
S_\zeta,\bar\Lambda_\zeta)$. By induction on $i<\kappa$ we choose
$\alpha_i\in E$ and $\eta_i\in {}^{\alpha_i}2\cap p$ so that 
\begin{enumerate}
\item[(i)] $\langle \alpha_i:i<\kappa\rangle\subseteq E$ is increasing
  continuous, 
\item[(ii)] $\langle \eta_i:i<\kappa\rangle$ is
  $\vartriangleleft$--increasing continuous, 
\item[(iii)] for each $i<\kappa$, for some $\rho\in {}^{\kappa>} 2$ we
  have $\rho\vartriangleleft \eta_{i+1}$ and $p_{\zeta,\rho}\in
  \cI_{\zeta,i}$, 
\item[(iv)]  for each $i<\kappa$ there is $\partial\in
  (\alpha_i,\alpha_{i+1})\cap S$ such that $\eta_{i+1}\rest \partial
  \notin \bigcap\limits_{\vare<\partial} \set(\cI^\partial_\vare)$. 
\end{enumerate}
It should be clear how to carry out the construction. At the end, the
sequence $\eta:=\bigcup\limits_{i<\kappa} \eta_i\in {}^\kappa 2$
belongs to ${\bold B}$ (by (iv)) but it does not belong to ${\bold
  B}_\zeta$ (by (iii)), contradicting the choice of $\zeta<\mu$. 
\end{PROOF}

\begin{claim}
\label{q7-item2}
If $\kappa$ is Mahlo and there is a non-reflecting stationary set
$S\subseteq S^\kappa_\pr$, then
\begin{enumerate}
\item $\add(\nst^\pr_\kappa) \le \gb_\kappa$, 
\item above we actually have $\add(\nst_{\kappa,S})=\gb_\kappa$,
\item $\gd_\kappa\leq\cf(\nst^{\pr}_\kappa)$.
\end{enumerate}
\end{claim}

\begin{PROOF}{\ref{q7-item2}}
Straightforward, as for $S'\subseteq S$ we have:

\qquad $S'\in\nst^\pr_\kappa$ if and only if $S'$ is non-stationary. 
\end{PROOF}

\section {The parallel of the Cicho\'n Diagram}
\label{CichKappa}
As before, $\lambda,\partial,\kappa$ vary on inaccessibles. 
\bigskip

\noindent
We have a characterization of $\kappa$--meagre sets similar to the one for
the case of $\kappa = \aleph_0$. (Note: here $\kappa$ inaccessible is used.)
\medskip

\begin{observation}
\label{p31}
1)\quad If $X \subseteq {}^\kappa 2$ is $\kappa$--meagre and $A \subseteq
\kappa$ is unbounded \then \, there is an increasing sequence $\bar\alpha$
of members of $A$ of length $\kappa$ and $\eta \in {}^\kappa 2$ such that
\[X \subseteq X_{\eta,\bar\alpha} := \{\nu \in {}^\kappa 2:\mbox{ for every }i <
\kappa\mbox{ large enough, }\eta \rest [\alpha_i,\alpha_{i+1}) \nsubseteq
\nu\}.\]
Moreover, if $A$ contains a club of $\kappa$ then the sequence
$\bar{\alpha}$ above can be increasing continuous.

\noindent 2)\quad If $\eta \in {}^\kappa 2$ and $\bar\alpha$ is an
increasing sequence of ordinals $< \kappa$ of length $\kappa$ \then \, the
set $X_{\eta,\bar\alpha}$ defined above is a $\kappa$-meagre subset of
${}^\kappa 2$.
\end{observation}

\begin{PROOF}{\ref{p31}}
1)\quad Let $X \subseteq \bigcup\{\lim_\kappa(\cT_i):i < \kappa\}$ where $\cT_i$ 
is a nowhere dense subtree of ${}^{\kappa >}2$.  For every infinite $\alpha
\in A$ let $\langle(\eta_{\alpha,\varepsilon},i_{\alpha,\varepsilon}):
\varepsilon<2^{|\alpha|}\rangle$ list ${}^\alpha 2 \times \alpha$, and then
we choose $\nu_{\alpha,\varepsilon},\beta_{\alpha,\varepsilon}$ by induction
on $\varepsilon \le 2^{|\alpha|}$ such that:
\begin{enumerate} 
\item[(a)] $\beta_{\alpha,\varepsilon}=\beta(\alpha,\varepsilon) < \kappa$
  is increasing continuous with $\varepsilon$, 
\item[(b)] $\nu_{\alpha,\varepsilon} \in {}^{\beta(\alpha,\varepsilon)}2$, 
\item[(c)] $\zeta < \varepsilon\quad \Rightarrow\quad   \nu_{\alpha,\zeta}
  \trianglelefteq \nu_{\alpha,\varepsilon}$, 
\item[(d)] $\eta_{\alpha,\varepsilon} \char 94 \nu_{\alpha,\varepsilon +1}
  \notin \cT_{i_{\alpha,\varepsilon}}$. 
\end{enumerate}
Why we can?  For $\varepsilon = 0$, let $\nu_{\alpha,\varepsilon} =
\langle \rangle$, for limit $\varepsilon$ let $\nu_{\alpha,\varepsilon} =
\bigcup\{\nu_{\alpha,\zeta}:\zeta < \varepsilon\}$ recalling (by \ref{pp8})
that $\cf(\kappa) = \kappa > 2^{|\alpha|}\geq\varepsilon$ and for
$\varepsilon = \zeta +1$ use  ``$\cT_{i_{\alpha,\varepsilon}}$ is nowhere
dense subtree of ${}^{\kappa >}2$''. 

Now by induction on $i < \kappa$ we choose $(\alpha_i,\nu_i)$ such that: 
\begin{enumerate}
\item[(e)]  $\alpha_i \in A$ is infinite increasing with $i$, $\alpha_i$
  minimal under these restrictions, 
\item[(f)]  $\nu_i \in {}^{\alpha_i}2$ is $\vartriangleleft$--increasing, 
\item[(g)]  if $i = j+1$ and $\gamma = 2^{|\alpha_j|}$ then $\alpha_i =
  \min\{\alpha \in A:\alpha > \alpha_j + \ell g(\nu_{\alpha_j,\gamma})\}$
  and $\nu_i$ is a member of ${}^{\alpha_i}2$ such that $\nu_j \char 94
  \nu_{\alpha_j,\gamma}\vartriangleleft \nu_i$.
\end{enumerate}
There is no problem to carry out the induction and $\langle \alpha_i:i <
\kappa\rangle$, $\eta := \bigcup\{\nu_i:i < \kappa\}$ are as required. 
\medskip

\noindent 2)\quad Should be clear. 
\end{PROOF}

\begin{remark}
\label{rem} 
The ideal $\id(\Cohen_\kappa)$ is an ideal of subsets of ${}^\kappa 2$. It
has a natural relative on ${}^\kappa\kappa$ --- the ideal of meagre subsets
of ${}^\kappa\kappa$. The two ideals are isomorphic in a suitable sense and
they have the same cardinal coefficients, cf \cite[Section 4]{MRSh:799}.  
\end{remark}

\begin{claim}
\label{p33}
\begin{enumerate}
\item $\add(\Cohen_\kappa) \le \gb_\kappa \le \non(\Cohen_\kappa)$. 
\item $\cov(\Cohen_\kappa) \le \gd_\kappa \le \cf(\Cohen_\kappa)$. 
\item $\cf(\Cohen_\kappa) = \max\{\gd_\kappa,\non(\Cohen_\kappa)\}$. 
\item $\add(\Cohen_\kappa) = \min\{\gb_\kappa,\cov(\Cohen_\kappa)\}$. 
\end{enumerate}
\end{claim}

\begin{PROOF}{\ref{p33}}
Our arguments are similar to those for $\kappa = \aleph_0$. 
\medskip

\noindent (1)\quad We will show that $\add(\Cohen_\kappa) \le \gb_\kappa$
(the inequality $\gb_\kappa \le \non(\Cohen_\kappa)$ should be clear;
remember \ref{rem}). Let $\mu=\gb_\kappa$ and let
$\{g_\alpha:\alpha<\mu\}\subseteq {}^\kappa\kappa$ exemplify this. For each
$\alpha<\mu$ let  
\[E_\alpha=\big\{\delta<\kappa:\delta\mbox{ is a limit ordinal and }
\big(\forall i<\delta \big) \big(g_\alpha(i)<\delta \big) \big\}.\] 
Let $\bar{\beta}_\alpha=\langle \beta_{\alpha,i}:i<\kappa\rangle$ list
$E_\alpha$ in the increasing order and let $\eta_\iota\in {}^\kappa 2$ be
constantly $\iota$ for $\iota=0,1$. Then $\{X_{\eta_\iota,
  \bar{\beta}_\alpha}: \iota<2\mbox{ and }\alpha<\mu\}$ is a collection of
$\mu$ many $\kappa$--meagre sets. Assume towards contradiction that their
union $A=\bigcup\{X_{\eta_\iota,\bar{\beta}_\alpha}: \iota<2\mbox{ and }
  \alpha<\mu\}$ is meagre. Hence, by \ref{p31}, there are $\eta\in {}^\kappa
  2$ and an increasing continous $\bar{\beta}\in {}^\kappa\kappa$ such that
  $A\subseteq X_{\eta,\bar{\beta}}$. Let  $g\in {}^\kappa\kappa$ be defined
  by $g(j)=\beta_{j+1}$. Then for some $\alpha<\mu$ we have $\neg(
g_\alpha\leq_{J^{\rm bd}_\kappa} g)$. If $\beta_j<\beta_{\alpha,i}\leq
\beta_{j+1}$, then $j \le \beta_j < \beta_{\alpha,i}$ and hence
$g_\alpha(j)<\beta_{\alpha,i}\leq\beta_{j+1}= g(j)$, so the set 
\[S=\{j<\kappa: (\beta_j,\beta_{j+1}]\cap \{\beta_{\alpha,i}:
i<\kappa\}=\emptyset\}\]
is of size $\kappa$. Choose a subset $S_0\subseteq S$ of size $\kappa$ such
that $j\in S_0\ \Rightarrow\ j+1\notin S_0$. Let $\nu\in
{}^\kappa 2$ be such that $\nu\rest [\beta_j, \beta_{j+1})=\eta\rest
[\beta_j,\beta_{j+1})$ for $j\in S_0$ and $\nu(i)=1$ whenever $i\notin
\bigcup\{[\beta_j,\beta_{j+1}): j\in S_0\}$. Then $\nu\in
X_{\eta_0,\bar{\beta}_\alpha}\setminus X_{\eta,\bar{\beta}}$, contradicting
$A\subseteq X_{\eta,\bar{\beta}}$.  
\medskip

\noindent
(2)\quad We will show that $\gd_\kappa \le \cf(\Cohen_\kappa)$. So let $\mu
= \cf(\Cohen_\kappa)$ and let $\langle A_\alpha:\alpha<\mu\rangle$ list a
cofinal subset of $\id(\Cohen_\kappa)$.  For each $\alpha < \mu$ we can find
$(\nu_\alpha,\bar\beta_\alpha)$ as in \ref{p31} such that $A_\alpha
\subseteq X_{\nu_\alpha,\bar\beta_\alpha}$. Let 
\[E_\alpha=\big\{\delta < \kappa:\delta\mbox{ is a limit ordinal such that 
}(\forall i)(\beta_{\alpha,i} < \delta \Leftrightarrow i < \delta)\big\},\]
it is a club of $\kappa$.  Towards contradiction assume $\gd_\kappa >
\mu$. Then there is a club $E$ of $\kappa$ such that $\sup(E_\alpha
\backslash E)=\kappa$ for all $\alpha < \mu$.  Let $\nu \in
{}^\kappa 2$ and the sequence $\bar\beta$ list $E$ in increasing
order and consider the $\kappa$--meagre set $X_{\nu,\bar\beta}$.  For some
$\alpha < \mu$ we have $X_{\nu,\bar\beta} \subseteq A_\alpha\subseteq
X_{\nu_\alpha,\bar{\beta}_\alpha}$. Easy contradiction to $\kappa = 
\sup(E_\alpha \backslash E)$.
\medskip

The inequality $\cov(\Cohen_\kappa)\le \gd_\kappa$ should be clear (remember
\ref{rem}). 
\medskip

\noindent
(3)\quad Recall that $\non(\Cohen_\kappa) \le \cf(\Cohen_\kappa)$ by
\ref{z27} and $\gd_\kappa \le \cf(\Cohen_\kappa)$ is proved in (2) above. So
we are left with: 
\[\cf(\Cohen_\kappa) \le \gd_\kappa + \non(\Cohen_\kappa).\]
Let $\mu = \non(\Cohen_\kappa)$; now
\begin{enumerate}
\item[$(\boxplus)$] there is $\{\varrho_\beta:\beta < \mu\} \subseteq
  {}^\kappa \kappa$ such that for every $\nu \in {}^\kappa \kappa$ for some
  $\beta<\mu$ we have $\sup\{i<\kappa:\varrho_\beta(i)=\nu(i)\}=\kappa$. 
\end{enumerate}
[Why?  For $\rho \in {}^\kappa 2$ let $\nu_\rho \in {}^\kappa \kappa$ be
such that for $i < \kappa$, $\nu_\rho(i)$ is $\gamma_{\rho,i}$ when
$\gamma_{\rho,i} < \kappa$ is the minimal $\gamma < \kappa$ such that,
if possible, $\rho(i + \gamma)=1$ (and if there is no such $\gamma$ then
it is 0).   Let $\eta_0\in {}^\kappa\kappa$ be constantly 0. Now if $\Lambda
\subseteq {}^\kappa 2$ is non-meagre of cardinality $\mu$ then  recalling
\ref{p31} the set $\{\nu_\rho:\rho \in \Lambda\}\cup\{\eta_0\} \subseteq
{}^\kappa \kappa$ is as required.] 

Let $\langle E_\gamma:\gamma < \gd_\kappa \rangle$ be a sequence of clubs of
$\kappa$ such that for any club $E$ of $\kappa$, for some
$\gamma$, $E_\gamma \subseteq E$, this is a variant of the definition of
$\gd_\kappa$.  For $\gamma < \gd_\kappa$ let $\bar\alpha_\gamma = \langle
\alpha_{\gamma,i}:i < \kappa\rangle$ list $E_\gamma \cup \{0\}$ in increasing
order. 

Let $\langle \rho_j:j < \kappa\rangle$ list $\bigcup\{{}^{[i,j)}2:i < j <
\kappa\}$ and for $(\beta,\gamma,\xi) \in \mu \times \gd_\kappa \times
\gd_\kappa$ let $A_{\beta,\gamma,\xi} =
X_{\varrho_{\beta,\gamma},\bar\alpha_\xi}$ from \ref{p31} where:
\begin{enumerate}
\item[$(\circledcirc)$]  for $\beta < \mu$ and $\gamma < \gd_\kappa$ let
$\varrho_{\beta,\gamma} \in {}^\kappa 2$ be such that
$\varrho_{\beta,\gamma} \rest [\alpha_{\gamma,i},\alpha_{\gamma,i+1})$
is equal to $\rho_{\varrho_\beta(i)}$ if $\rho_{\varrho_\beta(i)} \in
  {}^{[\alpha_{\gamma,i},\alpha_{\gamma,i+1})}2$ and is constantly
  zero otherwise. 
\end{enumerate}
So $\cA = \{A_{\beta,\gamma_1,\gamma_2}:\beta < \mu,\ \gamma_1 <
\gd_\kappa,\ \gamma_2 < \gd_\kappa\}$ is a subset of $\id(\Cohen_\kappa)$ 
and has cardinality $\le \mu + \gd + \gd = \max\{\mu,\gd\}$. Hence it
suffices to prove that $\cA$ is cofinal in $\id(\Cohen_\kappa)$.  To this
end let $A \in \id(\Cohen_\kappa)$, and let $\eta\in {}^\kappa 2$ and
increasing $\bar\alpha \in {}^\kappa \kappa$ be such that $A \subseteq
X_{\eta,\bar\alpha}$ (remember \ref{p31}).

Now, $E := \{\alpha < \kappa:\alpha\mbox{ is limit and }(\forall i <
\alpha)(\alpha_i < \alpha)\}$ is a club of $\kappa$, hence there is
$\gamma(1)<\gd_\kappa$ such that $E \supseteq E_{\gamma(1)}$. Then $A
\subseteq X_{\eta,\bar\alpha}\subseteq
X_{\eta,\bar\alpha_{\gamma(1)}}$.  Let $\varrho \in {}^\kappa \kappa$
be such that $i < \kappa \Rightarrow \eta \rest
[\alpha_{\gamma(1),i},\alpha_{\gamma(1),i+1}) = \rho_{\varrho(i)}$ and
let $\beta < \mu$ be such that $B =\{i < \kappa:\varrho(i) =
\varrho_\beta(i)\}$ is an unbounded subset of $\kappa$. Pick
$\gamma(2) < \gd$ such that 
\[E_{\gamma(2)}\subseteq \big\{\alpha\in E_{\gamma(1)}: \alpha\mbox{ is
  limit and }(\forall i<\alpha)(\alpha_{\gamma(1),i}<\alpha)
\big \}\] 
and $[\alpha_{\gamma(2),i}, \alpha_{\gamma(2),i+1}) \cap B
\ne \emptyset$ for every $i$.  Now clearly it suffices to prove:
\begin{enumerate}
\item[$(*)$]  $A \subseteq A_{\beta,\gamma(1),\gamma(2)}$.
\end{enumerate}
Why does $(*)$ hold?   Fix $\nu \in A$ and we shall prove that $\nu \in
A_{\beta,\gamma(1),\gamma(2)}$. By the choice of $(\eta,\bar\alpha)$ we know 
$\nu \in X_{\eta,\bar\alpha}$, so for $i < \kappa$ large enough $\nu \rest
[\alpha_i,\alpha_{i+1}) \nsubseteq \eta$. Let $i^*<\kappa$ be such that $\nu
\rest [\alpha_i,\alpha_{i+1}) \nsubseteq \eta$ for all $i\geq i^*$.

Let $i\in [i^*,\kappa)$. By the choice of $\gamma(2)$ we can fix $i_1 \in B$
such that $\alpha_{\gamma(2),i} \le i_1 < \alpha_{\gamma(2),i+1}$. Then, by
the definition of $B$, we have $\varrho(i_1)=\varrho_\beta(i_1)$ and by the
choice of $\varrho$ we have $\rho_{\varrho(i_1)} = \rho_{\varrho_\beta(i_1)}
= \eta \rest [\alpha_{\gamma(1),i_1},\alpha_{\gamma(1),i_1+1}) \in
{}^{[\alpha_{\gamma(1),i_1},\alpha_{\gamma(1),i_1+1})}2$. By the choice of
$\varrho_{\beta,\gamma(1)}$  in $(\circledcirc)$ we have 
\begin{enumerate}
\item[$(\boxdot)$]  $\varrho_{\beta,\gamma(1)} \rest
  [\alpha_{\gamma(1),i_1},\alpha_{\gamma(1),i_1+1}) = \eta \rest
  [\alpha_{\gamma(1),i_1},\alpha_{\gamma(1),i_1+1})$.
\end{enumerate}
Since $E_{\gamma(1)}\subseteq E$, we may find $i_2<\kappa$ such that
$[\alpha_{i_2},\alpha_{i_2+1}) \subseteq
[\alpha_{\gamma(1),i_1},\alpha_{\gamma(1),i_1+1})$. Then necessarily
$i_2\geq i_1\geq i^*$ and hence we have 
\[\nu\rest [\alpha_{i_2},\alpha_{i_2+1})\neq \eta\rest [\alpha_{i_2},
\alpha_{i_2+1})= \varrho_{\beta,\gamma(1)} \rest [\alpha_{i_2},
\alpha_{i_2+1}),\]  
and consequently $\nu\rest [\alpha_{\gamma(1),i_1},
\alpha_{\gamma(1),i_1+1}) \neq\varrho_{\beta,\gamma(1)}\rest
[\alpha_{\gamma(1),i_1},\alpha_{\gamma(1),i_1+1})$. Since
$E_{\gamma(2)}\subseteq \big\{\alpha<\kappa: \alpha\mbox{ is limit and
}(\forall j<\alpha)(\alpha_{\gamma(1),j}<\alpha)\big\}$, we know that  
\begin{enumerate}
\item[$(\boxtimes)$]  $[\alpha_{\gamma(1),i_1},
  \alpha_{\gamma(1),i_1+1}) \subseteq [\alpha_{\gamma(2),i},
  \alpha_{\gamma(2),i+1})$ and thus $\nu\rest [\alpha_{\gamma(2),i},
\alpha_{\gamma(2),i+1}) \neq\varrho_{\beta,\gamma(1)}\rest
[\alpha_{\gamma(2),i},\alpha_{\gamma(2),i+1})$.  
\end{enumerate}
Now we easily finish concluding that $\nu\in
X_{\varrho_{\beta,\gamma(1)},\bar\alpha_{\gamma(2)}}=  
A_{\beta,\gamma(1),\gamma(2)}$, as desired. 
\medskip

\noindent (4)\quad It follows from \ref{z27} and \ref{p33}(1) that 
$\mu:=\add(\Cohen_\kappa) \leq
\min\{\gb_\kappa,\cov(\Cohen_\kappa)\}$. In order to show the converse
inequality assume towards contradiction that $\mu<\min\{\gb_\kappa,
\cov(\Cohen_\kappa)\}$.  Suppose that $\cA=\{ A_\gamma: \gamma<\mu\}$
is a family of members of $\id(\Cohen_\kappa)$ (and we will argue that
$\bigcup\cA\in \id(\Cohen_\kappa)$). For $\gamma<\mu$ let
$(\eta_\gamma, \bar{\beta}_\gamma)$ be as in \ref{p31} and such that
$A_\gamma\subseteq X_{\eta_\gamma,\bar{\beta}_\gamma}$ and let  
\[E_\gamma=\{\alpha < \kappa:\alpha\mbox{ is limit and }(\forall i <
\alpha)(\beta_{\gamma,i} < \alpha)\}\]
(it is a club of $\kappa$). As $\mu<\gb_\kappa$ we may find an
increasing continuous sequence $\bar{\beta}=\langle
\beta_j:j<\kappa\rangle$ of ordinals below $\kappa$ such that for each
$\gamma$ and every sufficiently large $j$ we have $\beta_j\in
E_\gamma$.   Then $X_{\eta_\gamma,\bar{\beta}_\gamma}\subseteq
X_{\eta_\gamma, \bar{\beta}}$. Since $\mu<\cov(\Cohen_\kappa)$, by an
easy dualization of $(\boxplus)$ of (3), we have:  
\begin{enumerate}
\item[$(\boxplus)^*_\bot$] there is $\nu\in {}^\kappa 2$ such that 
for every $\gamma<\mu$ the set
\[Z_\gamma:=\big\{j<\kappa:\eta_\gamma\rest [\beta_j,
\beta_{j+1})=\nu\rest [\beta_j, \beta_{j+1})\big\}\]
is of size $\kappa$.
\end{enumerate}
Using $\mu<\gb_\kappa$ again, we may find an increasing sequence
$\bar{\alpha}$ such that
\[(\forall \gamma<\mu)(\exists i_0<\kappa)(\forall i>i_0)(Z_\gamma\cap
[\alpha_i, \alpha_{i+1})\neq \emptyset).\]
Then letting $\delta_i=\beta_{\alpha_i}$ (for $i<\kappa$) we will have
$X_{\eta_\gamma,\bar{\beta}}\subseteq X_{\nu,\bar{\delta}}$ for each
$\gamma$ and the desired conclusion easily follows.  
\end{PROOF}

\begin{claim}
\label{p39}
\begin{enumerate}
\item  If $\kappa = \sup(S_{\inc}^\kappa)$ \then \,
  $\cov(\Cohen_\kappa) \le \non(\bbQ_\kappa)$. 
\item  If $\kappa = \sup(S_{\inc}^\kappa)$ \then \,
  $\cov(\bbQ_\kappa) \le \non(\Cohen_\kappa)$. 
\end{enumerate}
\end{claim}

\begin{PROOF}{\ref{p39}}
Both follow by \ref{p80} and \ref{p24}.
\medskip

\noindent (1)\quad Let $A_0\in \id(\Cohen_\kappa)$,
$A_1 \in \id(\bbQ_\kappa)$ be a partition of ${}^\kappa 2$ (see
\ref{p24}). There is $X = \{\eta_\vare:\vare < \mu\} \subseteq {}^\kappa 2$
where $\mu = \non(\bbQ_\kappa)$ such that $X \notin \id(\bbQ_\kappa)$.  Now,
${}^\kappa 2$ with addition $\oplus$ modulo 2, coordinatewise, is an Abelian
Group and both ideals $\id(\Cohen_\kappa)$ and $\id(\bbQ_\kappa)$ are closed
under translations (see \ref{p80}). Thus $\{\eta_\vare\oplus A_0:\vare <
\mu\}$ is a family of $\le \mu$ members of $\id(\Cohen_\kappa)$ and it 
suffices to prove that $\bigcup\{\eta_\vare \oplus A_0:\vare < \mu\} =
{}^\kappa 2$.  So let $\nu \in {}^\kappa 2$. Since $\{\eta_\vare:\vare <\mu\} \notin
\id(\bbQ_\kappa)$, also $\{\eta_\vare\oplus\nu:\vare < \mu\} \notin
\id(\bbQ_\kappa)$ and hence it is not included in $A_1$. Thus for some
$\vare < \mu$, $\eta_\vare\oplus\nu \in A_0$, hence $\nu \in
\eta_\vare\oplus A_0$ as required.

\noindent
2) Same proof, just interchanging $A_0$ and $A_1$. 
\end{PROOF}

\begin{claim}
\label{u2}
If $\gb_\kappa >\cov(\Cohen_\kappa)$, \then \, $\cov(\bbQ_\kappa) \le
\cov(\Cohen_\kappa)$.    
\end{claim}

\begin{PROOF}{\ref{u2}}
If $\kappa>\sup(S^\kappa_\inc)$,  then $\cov(\bbQ_\kappa)=\cov(
\Cohen_\kappa)$. 

So suppose $\kappa$ is an inaccessible limit of inaccessibles and
$\gb_\kappa >\cov(\Cohen_\kappa)$. Assume towards contradiction that 
$\cov(\bbQ_\kappa) >\cov(\Cohen_\kappa):=\mu$. 

Using the assumption  $\gb_\kappa>\mu=\cov(\Cohen_\kappa)$ and Observation 
\ref{p31}  we can easily find an increasing sequence $\bar\theta = \langle 
\theta_\vare:\vare < \kappa\rangle$ and a family $\Upsilon \subseteq
\prod\limits_{\vare<\kappa} \theta_\vare$ such that   
\begin{enumerate}
\item[$(*)_1$]  $0 < \theta_\vare<\kappa$ for each $\vare<\kappa$,
  $|\Upsilon|=\mu$ and 
\item[$(*)_2$] $(\forall\nu \in\prod\limits_{\vare<\kappa}
  \theta_\vare)(\exists\rho \in \Upsilon)(\forall^\infty\vare<\kappa)(
  \rho(\vare)\neq\nu(\vare))$. 
\end{enumerate}
Next, by induction on $\vare < \kappa$, we choose inaccessible cardinals 
$\partial_\vare$ such that: 
\begin{enumerate}
\item[$(*)_3$] $\partial_\vare > \theta_\vare+\sum\limits_{\zeta <
    \vare} \partial_\zeta$\quad and \quad $\partial_\vare >
  \sup(\partial_\vare \cap  S^\kappa_\inc)$. 
\end{enumerate}
For each $\vare < \kappa$ fix a partition $\langle
S_{\vare,i}:i<\theta_\vare \rangle$ of $\partial_\vare$ into stationary sets   
and 
\begin{itemize}
\item for $0<i<\theta_\vare$ define $A_{\vare,i} = \big\{\eta \in
  {}^{\partial_\vare}2:$ the set $\{\alpha \in
  S_{\vare,i}:\eta(\alpha)=1\}$ is stationary but for each $j<i$ the set
  $\{\alpha\in S_{\vare,j}:\eta(\alpha)=1\}$ is not stationary$\big\}$, and  
\item let $A_{\vare,0} = {}^{\partial_\vare}2\setminus \bigcup\limits_{i \in
    [1,\theta_\vare)} A_{\vare,i}$. 
\end{itemize}
Note that $\langle A_{\vare,i}:i<\theta_\vare\rangle$
  is a partition of ${}^{\partial_\vare}2$ such that  
\begin{enumerate}
\item[$(*)_4$] $\nu \in   {}^{\partial_\vare >}2\ \Rightarrow\ \{\eta \in 
  A_{\vare,i}:\nu\vartriangleleft \eta\} \notin 
  \id(\Cohen_{\partial_\vare})$.
\end{enumerate}
Now, for $\rho \in \Upsilon$ and $\alpha < \kappa$ let 
\[\begin{array}{ll}
\cI_{\rho,\alpha} = \big\{p\in\bbQ_\kappa:&\ell g(\tr(p)) > \alpha\mbox{ and
for some } \vare<\kappa\\
&\alpha < \partial_\vare < \ell g(\tr(p))\ \wedge\
\tr(p) \rest \partial_\vare \in A_{\vare,\rho(\vare)}\big\}.
\end{array}\]   
It should be clear that each $\cI_{\rho,\alpha}$ is an open dense subset of
$\bbQ_\kappa$ (remember that $\partial_\vare > \sup(\partial_\vare \cap 
  S^\kappa_\inc)$ and use $(*)_4$). 

As we are assuming towards contradiction that $\cov(\bbQ_\kappa) > \mu$, the
set  $\bigcap\limits_{\rho \in \Upsilon} \, \bigcap\limits_{\alpha < \kappa}
\set(\cI_{\rho,\alpha})$ is not empty. Let $\eta \in \bigcap\limits_{\rho \in
  \Upsilon} \, \bigcap\limits_{\alpha < \kappa} \set(\cI_{\rho,\alpha})$ and
let $\nu \in \prod\limits_{\vare<\kappa} \theta_\vare$ be such that 
\[\vare < \kappa\quad \Rightarrow\quad \eta\rest \partial_\vare \in
A_{\vare,\nu(\vare)}.\]  
By the choice of $\eta$, for every $\rho\in\Upsilon$ we have
$\sup(\{\vare<\kappa: \eta\rest \partial_\vare \in
A_{\vare,\rho(\vare)} \})=\kappa$. Hence
\[(\forall \rho \in \Upsilon)(\exists^\infty \vare < \kappa)(\nu(\vare) =
\rho(\vare)),\] 
a clear contradiction with $(*)_2$.
\end{PROOF}

\begin{conclusion}
\label{u5}
Assume that either
\begin{enumerate}
\item[(a)] $\kappa>\sup(S^\kappa_\inc)$, or
\item[(b)] $\gb_\kappa>\cov(\Cohen_\kappa)$, or 
\item[(c)] there is a stationary non-reflecting set $S\subseteq
  S^\kappa_\pr$. 
\end{enumerate}
Then $\add(\bbQ_\kappa)\le \add(\Cohen_\kappa)$.
\end{conclusion}

\begin{PROOF}{\ref{u5}}
If $\kappa > \sup(\kappa \cap S^\kappa_\inc)$ then $\bbQ_\kappa$ is
equivalent to $\Cohen_\kappa$, and moreover $\id(\bbQ_\kappa) = 
\id(\Cohen_\kappa)$ (see \ref{p5}(1)) and so $\add(\bbQ_\kappa) =
\add(\Cohen_\kappa)$.  

Let us assume $\gb_\kappa > \cov(\Cohen_\kappa)$. Then, by
\ref{p33}(4), 
\begin{enumerate}
\item[$(\bullet)_1$]   $\add(\Cohen_\kappa)
= \cov(\Cohen_\kappa)$ 
\end{enumerate}
and by the Claim \ref{u2}
\begin{enumerate}
\item[$(\bullet)_2$]  $\cov(\bbQ_\kappa) \le \cov(\Cohen_\kappa)$.
\end{enumerate}
Hence (first inequality trivial, holds for any ideal, e.g. 
see \ref{z27}, the other two by $(\bullet)_2$ and $(\bullet)_1$)
\begin{enumerate}
\item[$(\bullet)$]  $\add(\bbQ_\kappa) \le \cov(\bbQ_\kappa) \le
  \cov(\Cohen_\kappa) = \add(\Cohen_\kappa)$. 
\end{enumerate}

Finally, if $\gb_\kappa\leq\cov(\Cohen_\kappa)$ but there is a
stationary non-reflecting set $S\subseteq S^\kappa_\pr$, then by
\ref{p33}(4) we have $\add(\Cohen_\kappa)=\gb_\kappa$ and by
\ref{q7-item2}(1)+\ref{p36hx}(1) we get 
\[\add(\bbQ_\kappa)\leq \add(\nst^\pr_\kappa)\leq \gb_\kappa
=\add(\Cohen_\kappa).\] 
So we are done.
\end{PROOF}

The following result is dual to \ref{u2}. 

\begin{claim}
\label{u8}
If $\gd_\kappa<\non(\Cohen_\kappa)$, \then \, $\non(\Cohen_\kappa) \le
\non(\bbQ_\kappa)$.   
\end{claim}

\begin{PROOF}{\ref{u8}}
If $\kappa > \sup(S_\inc^\kappa \cap \kappa)$ this holds trivially as in the 
proof of \ref{u5}, so from now on assume $\kappa = \sup(S_\inc^\kappa\cap
\kappa)$. For every $\bar\theta = \langle \theta_\vare:\vare <
\kappa\rangle$ with $1< \theta_\vare < \kappa$ we choose
$\bar\partial_{\bar\theta} = \langle\partial_{\bar{\theta},\vare}:\vare < 
\kappa\rangle$, $\bar{S}_{{\bar\theta},\vare} = \langle
S_{\bar{\theta},\vare,i}: i<\theta_\vare\rangle$, $\bar A_{\bar\theta,\vare} 
=  \langle A_{\bar\theta,\vare,i}:i <\theta_\vare\rangle$ as in the proof of  
Claim \ref{u2}. That is, $\bar\partial_{\bar\theta},
\bar{S}_{{\bar\theta},\vare}, \bar A_{\bar\theta,\vare}$ satisfy for
$\vare<\kappa$: 
\begin{enumerate}
\item[$(\oplus)_1$] $\partial_{\bar{\theta},\vare}<\kappa$ is an
  inaccessible cardinal such that $\partial_{\bar{\theta},\vare}>
  \theta_\vare+\sum\limits_{\zeta<\vare} \partial_{\bar{\theta},\zeta}$ and  
  $\partial_{\bar{\theta},\vare} >\sup(\partial_{\bar{\theta},\vare} \cap 
  S^\kappa_\inc)$,  
\item[$(\oplus)_2$] $\langle S_{\bar{\theta},\vare,i}:i<\theta_\vare
  \rangle$ is a partition of $\partial_\vare$ into stationary sets, and 
\item[$(\oplus)_3$] for $0<i<\theta_\vare$, $A_{\bar{\theta},\vare,i} =
  \big\{\eta \in {}^{\partial_\vare}2:$ the set $\{\alpha \in
  S_{\bar{\theta},\vare,i}:\eta(\alpha)=1\}$ is stationary but for each
  $j<i$ the set $\{\alpha\in S_{\bar{\theta},\vare,j}:\eta(\alpha)=1\}$ is
  not stationary$\big\}$, and   
\item[$(\oplus)_4$]  $A_{\bar{\theta},\vare,0} =
  {}^{\partial_\vare}2\setminus \bigcup\limits_{i \in [1,\theta_\vare)}
  A_{\bar{\theta},\vare,i}$.  
\end{enumerate}
A mapping ${}^\kappa 2\ni\eta \mapsto \nu_{\bar\theta,\eta} \in 
\prod\limits_{\vare<\kappa} \theta_\vare$ is defined by the condition 
$\eta \rest \partial_{\bar\theta,\vare} \in
A_{\bar\theta,\vare,\nu_{\bar\theta,\eta}(\vare)}$ for each $\vare<\kappa$. 

Choose $\Upsilon \subseteq {}^\kappa 2$, $\Upsilon \notin \id(\bbQ_\kappa)$,  
of cardinality $\non(\bbQ_\kappa)$.  For any $\bar\theta$ as above let 
$\Upsilon_{\bar\theta} =\{\nu_{\bar\theta,\eta}:\eta \in \Upsilon\}$. Then
clearly 
\begin{enumerate}
\item[$(\oplus)_5$]  $\Upsilon_{\bar\theta} \subseteq
  \prod\limits_{\vare<\kappa}\theta_\vare$ and $\Upsilon_{\bar\theta}$ has
  cardinality $\le \non(\bbQ_\kappa)$.
\end{enumerate}

Dually to arguments in \ref{u2} we will argue now that 
\begin{enumerate}
\item[$(\oplus)_6$] for every $\rho \in \prod\limits_{\vare<\kappa} \theta_\vare$,
  there is $\nu \in \Upsilon_{\bar\theta}$ such that $(\exists^\infty \vare
  < \kappa)(\rho(\vare) = \nu(\vare))$. 
\end{enumerate}
Why? Suppose $\rho \in \prod\limits_{\vare<\kappa} \theta_\vare$. For
$\alpha < \kappa$ let  
\[\begin{array}{ll}
\cI_\alpha = \big\{p\in\bbQ_\kappa:&\ell g(\tr(p)) > \alpha\mbox{ and
for some } \vare<\kappa\\
&\alpha < \partial_{\bar{\theta},\vare} < \ell g(\tr(p))\ \wedge\
\tr(p) \rest \partial_{\bar{\theta},\vare} \in
  A_{\bar{\theta},\vare,\rho(\vare)}\big\}. 
\end{array}\]   
Clearly, each $\cI_\alpha$ is an open dense subset of $\bbQ_\kappa$
(remember $\partial_{\bar{\theta},\vare}>\sup(\partial_{\bar{\theta},\vare} 
\cap S^\kappa_\inc)$).   Since $\Upsilon \notin \id(\bbQ_\kappa)$ we know
that $\Upsilon\cap\bigcap\limits_{\alpha < \kappa}
\set(\cI_\alpha)\neq\emptyset$. Let $\eta \in
\Upsilon\cap\bigcap\limits_{\alpha < \kappa} \set(\cI_\alpha)$. Then 
$(\exists^\infty \vare < \kappa)(\nu_{\bar{\theta},\eta}(\vare) 
=\rho(\vare))$. Thus $(\oplus)_6$ is justified. 
\medskip

Easily by definition of $\gd_\kappa$ we may choose a family
$\{\bar{\alpha}_\xi: \xi<\gd_\kappa\}$ such that 
\begin{enumerate}
\item[$(\oplus)_7$]
    \begin{enumerate}
\item[(a)] $\bar{\alpha}_\xi=\langle \alpha_{\xi,\vare}:
      \vare<\kappa\rangle$ is an increasing continuous sequence in $\kappa$
      (for each $\xi<\gd_\kappa$), and
\item[(b)] if $\langle\alpha_i:i<\kappa\rangle$ is an increasing sequence of
  ordinals below $\kappa$, then for some $\xi<\gd_\kappa$ we have 
\[(\forall^\infty \vare<\kappa)(\exists i<\kappa)(\alpha_{\xi,\vare} <
\alpha_i<\alpha_{i+1}<\alpha_{\xi,\vare+1}).\]  
    \end{enumerate}
\end{enumerate}
Now, for each $\xi<\gd_\kappa$ let $\bar{\theta}_\xi=\langle \theta_{\xi,\vare}:
\vare<\kappa\rangle$, where
$\theta_{\xi,\vare}=|{}^{[\alpha_{\xi,\vare},\alpha_{\xi,\vare+1})}
2|$. Also, for each $\xi, \vare$ fix a bijection
$\pi_{\xi,\vare}:\theta_{\xi,\vare} \longrightarrow
{}^{[\alpha_{\xi,\vare},\alpha_{\xi,\vare+1})} 2$ and for $\nu\in
\prod\limits_{\vare<\kappa} \theta_{\xi,\vare}$ (for $\xi<\gd_\kappa$) set 
$x_{\xi,\nu}=\bigcup\limits_{\vare<\kappa} \pi_{\xi,\vare}(\nu(\vare))\in
{}^\kappa 2$. Consider the set 
\[\cX=\big\{x_{\xi,\nu}: \xi<\gd_\kappa \ \wedge\  \nu\in
\Upsilon_{\bar{\theta}_\xi}\big\}.\] 
We claim that 
\begin{enumerate}
\item[$(\oplus)_8$] $\cX\notin\id(\Cohen_\kappa)$.  
\end{enumerate}
If not, then for some $\eta\in {}^\kappa 2$ and an increasing continuous
sequence $\bar{\alpha}=\langle\alpha_i:i<\kappa\rangle\subseteq \kappa$ we
have $\cX\subseteq X_{\eta,\bar{\alpha}}$.  Let $\xi<\gd_\kappa$ be given by
$(\oplus)_7({\rm b})$ for $\bar{\alpha}$ and let $\rho^*\in
\prod\limits_{\vare<\kappa} \theta_{\xi,\vare}$ be such that
$\pi_{\xi,\vare}(\rho^*(\vare))= \eta\rest [\alpha_{\xi,\vare},
\alpha_{\xi,\vare+1})$ for each $\vare<\kappa$. It follows from $(\oplus)_6$
that for some $\nu\in\Upsilon_{\bar{\theta}_\xi}$ we have $(\exists^\infty \vare
  < \kappa)(\rho^*(\vare) = \nu(\vare))$.  This implies that
  $\big(\exists^\infty \vare<\kappa\big)\big(x_{\xi,\nu}\rest
  [\alpha_{\xi,\vare},\alpha_{\xi,\vare+1})=\eta\rest [\alpha_{\xi,\vare},
  \alpha_{\xi,\vare+1})\big)$ and hence (remembering the choice of 
  $\xi$) we get $\big(\exists^\infty i<\kappa\big)\big(x_{\xi,\nu}\rest
  [\alpha_i,\alpha_{i+1})=\eta\rest
  [\alpha_i,\alpha_{i+1})\big)$. Consequently $x_{\xi,\nu}\notin
  X_{\eta,\bar{\alpha}}$, a contradiction.  
\medskip

It follows from $(\oplus)_8$ that $\gd_\kappa< \non(\Cohen_\kappa) \le
|\cX|\leq \non(\bbQ_\kappa)+\gd_\kappa$ and therefore $\non(\Cohen_\kappa)
\leq \non(\bbQ_\kappa)$. 
\end{PROOF}

\begin{conclusion}
\label{u11}
Assume that either
\begin{enumerate}
\item[(a)] $\kappa>\sup(S^\kappa_\inc)$, or
\item[(b)] $\gd_\kappa<\non(\Cohen_\kappa)$, or 
\item[(c)] there is a stationary non-reflecting set $S\subseteq
  S^\kappa_\pr$. 
\end{enumerate}
Then $\cf(\Cohen_\kappa) \le \cf(\bbQ_\kappa)$.  
\end{conclusion}

\begin{PROOF}{\ref{u11}}
The proof is similar to the proof of \ref{u5}.
\medskip

If $\kappa>\sup(S^\kappa_\inc)$ then $\id(\bbQ_\kappa)=\id( \Cohen_\kappa)$
and $\cf(\bbQ_\kappa)=\cf(\Cohen_\kappa)$.
\medskip

If $\gd_\kappa<\non(\Cohen_\kappa)$, then it follows from \ref{p33}(3) that
$\cf(\Cohen_\kappa)= \non(\Cohen_\kappa)$. Also, by \ref{u8} and \ref{z27}(b), we have
$\non(\Cohen_\kappa)\leq \non(\bbQ_\kappa) \leq \cf(\bbQ_\kappa)$. Together
$\cf(\Cohen_\kappa) \le \cf(\bbQ_\kappa)$ (under present assumptions).
\medskip

If $\gd_\kappa\geq\non(\Cohen_\kappa)$, but there is a non-reflecting
stationary subset of $S^\kappa_\pr$, then we use \ref{q7-item2}(3) to get
$\cf(\nst^\pr_\kappa)\geq \gd_\kappa$. Now. \ref{p33}(3) implies
$\cf(\Cohen_\kappa)=\gd_\kappa$ and \ref{p36hx}(2) gives
$\cf(\bbQ_\kappa)\geq \cf(\nst^\pr_\nst)$. Together we conclude
$\cf(\bbQ_\kappa)\geq \cf(\Cohen_\kappa)$, as desired. 
\end{PROOF}

Now we may summarize the results of this section in the form of
diagrams. 

\begin{theorem}
\label{u14}
Assume that $\kappa$ is an inaccessible cardinal and
$\kappa=\sup(S^\kappa_\inc)$. Then the inequalities represented by arrows 
in the following diagram hold true:
\[\hspace{-1cm}
\begin{array}{ccccccccccc}
&&{\rm cov}(\bbQ_\kappa)&\rightarrow &{\rm non}(\Cohen_\kappa)
    &\rightarrow&{\rm cf}(\Cohen_\kappa) & &
{\rm cf}(\bbQ_\kappa) &\rightarrow&2^\kappa\\
&&\Bigg\uparrow& &\uparrow &&\uparrow&&\Bigg\uparrow \\ 
&&\big|& & {\mathfrak b}_\kappa &\rightarrow &{\mathfrak d}_\kappa&&\big|
    \\ 
&&\Bigg |& &\uparrow &&\uparrow&&\Bigg | \\ 
\kappa^+&\rightarrow&{\rm add}(\bbQ_\kappa)&  &{\rm add}(\Cohen_\kappa)
&\rightarrow&{\rm cov}(\Cohen_\kappa)&\rightarrow&
{\rm non}(\bbQ_\kappa)&&
\end{array}\] 
plus the dependencies
\begin{itemize}
\item $\add(\Cohen_\kappa) =\min\{\cov(\Cohen_\kappa),\gb_\kappa\}$, 
\item $\cf(\Cohen_\kappa)= \max\{\non(\Cohen_\kappa),\gd_\kappa\}$,
\item $\cov(\bbQ_\kappa)\leq \non(\bbQ_\kappa)$ (see \ref{u24}(3)). 
\end{itemize}
Moreover, we may add that one of the following four diagrams holds (where
each arrow $\rightarrow$ represents the inequality $\leq$ and
$\uparrow{\neq}$ represents the strict inequality $<$). 
\medskip

\noindent
\underline{Case 1}:
\bigskip
\[\hspace{-1cm}
\begin{array}{ccccccccccc}
&&&&&&&&{\rm cf}(\bbQ_\kappa) &\rightarrow & 2^\kappa\\
&&&&&&&&\Big\uparrow&&\\
&&&&{\rm non}(\Cohen_\kappa)
    &=&{\rm cf}(\Cohen_\kappa) &\rightarrow&
{\rm non}(\bbQ_\kappa) &&\\
&&&&\uparrow &&\uparrow{\neq}&&&&\\ 
&&&& {\mathfrak b}_\kappa &\rightarrow &{\mathfrak d}_\kappa&&\\ 
&&&&\uparrow{\neq} &&\uparrow&&&&\\ 
&&{\rm cov}(\bbQ_\kappa)&\rightarrow &
{\rm add}(\Cohen_\kappa)&=&{\rm
  cov}(\Cohen_\kappa)&&&&\\
&&\Big\uparrow &&&&&&&&\\
\kappa^+&\rightarrow&{\rm add}(\bbQ_\kappa) &&&&&&&&\\
\end{array}\] 
\medskip

\noindent
\underline{Case 2}:
\bigskip
\[\hspace{-1cm}
\begin{array}{ccccccccccc}
&&&&&&&&{\rm cf}(\bbQ_\kappa) &\rightarrow & 2^\kappa\\
&&&&&&&&\Big\uparrow&&\\
&&{\rm cov}(\bbQ_\kappa)
&\rightarrow&{\rm non}(\Cohen_\kappa)
    &=&{\rm cf}(\Cohen_\kappa) &\rightarrow&
{\rm non}(\bbQ_\kappa) &&\\
&&\big\uparrow&&\uparrow &&\uparrow{\neq}&&&&\\ 
&&\big|&& {\mathfrak b}_\kappa &\rightarrow &{\mathfrak d}_\kappa&&\\ 
&&\big|&&\parallel &&\uparrow&&&&\\ 
\kappa^+&\rightarrow&{\rm add}(\bbQ_\kappa) & &
{\rm add}(\Cohen_\kappa)&\rightarrow
&{\rm cov}(\Cohen_\kappa)&&&\\
\end{array}\] 
\medskip

\noindent
\underline{Case 3}:
\bigskip

\[\hspace{-1cm}
\begin{array}{ccccccccccc}
&&&&{\rm non}(\Cohen_\kappa)
    &\rightarrow&{\rm cf}(\Cohen_\kappa) & &
{\rm cf}(\bbQ_\kappa) &\rightarrow & 2^\kappa\\
&&&&\uparrow &&\parallel&&\big\uparrow&&\\ 
&&&& {\mathfrak b}_\kappa &\rightarrow &{\mathfrak d}_\kappa&&\big|&\\ 
&&&&\uparrow{\neq} &&\uparrow&&\big|&&\\ 
&&{\rm cov}(\bbQ_\kappa) &\rightarrow &
{\rm add}(\Cohen_\kappa)&=&{\rm
  cov}(\Cohen_\kappa) &\rightarrow &
{\rm non}(\bbQ_\kappa) &&\\
&&\Big\uparrow &&&&&&&&\\
\kappa^+&\rightarrow&{\rm add}(\bbQ_\kappa) &&&&&&&&\\
\end{array}\] 
\medskip

\noindent
\underline{Case 4}:
\bigskip
\[\hspace{-1cm}
\begin{array}{ccccccccccc}
&&{\rm cov}(\bbQ_\kappa)&\rightarrow &{\rm non}(\Cohen_\kappa)
    &\rightarrow&{\rm cf}(\Cohen_\kappa) & &
{\rm cf}(\bbQ_\kappa) &\rightarrow&2^\kappa\\
&&\big\uparrow& &\uparrow&&\parallel &&\big\uparrow \\ 
&&\big|& & {\mathfrak b}_\kappa &\rightarrow &{\mathfrak d}_\kappa&&\big|
    \\ 
&&\big|& &\parallel &&\uparrow&&\big| \\ 
\kappa^+&\rightarrow&{\rm add}(\bbQ_\kappa)& &{\rm add}(\Cohen_\kappa)
&\rightarrow&{\rm cov}(\Cohen_\kappa)&\rightarrow&
{\rm non}(\bbQ_\kappa)&&
\end{array}\] 
\end{theorem}
\bigskip

\begin{remark}
  \begin{enumerate}
\item In a later work we prove that $\add(\nst^\pr_\kappa)\leq \gd_\kappa$
    and $\gb_\kappa\leq \cf(\nst^\pr_\kappa)$. Consequently, by \ref{p36hx}, 
    $\add(\bbQ_\kappa)\leq \gd_\kappa$ and
    $\cf(\bbQ_\kappa)\geq\gb_\kappa$. 
\item Remember that by \ref{u5} and \ref{u11}, if
  $\kappa>\sup(S^\kappa_\inc)$ or there is a stationary non-reflecting set
  $S\subseteq S^\kappa_\pr$, then $\add(\bbQ_\kappa)\leq
  \add(\Cohen_\kappa)$ and  $\cf(\bbQ_\kappa)\geq \cf(\Cohen_\kappa)$. 
  \end{enumerate}
\end{remark}

\section{$\bbQ_\kappa$ vs $\Cohen_\kappa$}
\label{VS}

\subsection{Effect on the ground model} 
\label{effect}

\begin{claim}
\label{p15}
If $\kappa$ is an inaccessible limit of inaccessibles, \then \, in
$\bold V^{\bbQ_\kappa}$ the set $({}^\kappa 2)^{\bold V}$ is
$\kappa$--meagre.
\end{claim}

\begin{remark}
\label{p16x}
1) The dual is \ref{p28}.

\noindent
2) The assumption is necessary by \ref{p5}.
\end{remark}

\begin{PROOF}{\ref{p15}}
Let $\langle \partial_i:i < \kappa\rangle$ list in increasing order
the (strongly) inaccessible cardinals below $\kappa$. We claim that 

\begin{equation*}
\begin{array}{clcr}
\Vdash_{\bbQ_\kappa} &``\text{if } \nu \in ({}^\kappa 2)^{\bold V}
  \text{ \then \, for every } i < \kappa \text{ large enough } 
  \name\eta \rest (\partial_i +1,\partial_{i+1}) \nsubseteq \nu, \\
  &\text{ moreover }\quad \alpha < \partial_{i+1} \Rightarrow \name\eta \rest
  (\alpha,\partial_{i+1}) \nsubseteq \nu".
\end{array}
\end{equation*}

\noindent
This clearly suffices by \ref{p31}(2).  Let $p \in \bbQ_\kappa$ and we shall
fix $\nu \in ({}^\kappa 2)^{\bold V}$ and we shall find $q$ and $i_* < 
\kappa$ such that $p \le_{\bbQ_\kappa} q$ and $q \Vdash$ ``if $i >
i_*$ then $\name\eta \rest (\partial_i +1,\partial_{i+1}) \nsubseteq
\nu"$.

Let $i_*$ be such that $\ell g(\tr(p))<\partial_{i_*}$ and let
$(\varrho,S_1,\bar\Lambda)$ be a witness for $p \in \bbQ_\kappa$.  Now let
$S_2 = \{\partial_{i+1}:i > i_*\}$ and if $\partial = \partial_{i+1} \in
S_2$ and $\alpha \in (\partial_i,\partial_{i+1})$ \then \, we let 
\[\cI_{\partial,\alpha} = \big\{r \in \bbQ_\partial:\ell g(\tr(r)) >
\alpha\mbox{ and }\tr(r) \rest [\alpha,\ell g(\tr(r)) \nsubseteq \nu\big\}.\]
Clearly, $\cI_{\partial,\alpha}$ is a dense open subset of $\bbQ_\partial$.
Now let $S' = S_1 \cup S_2$ and note that $S_2$ is nowhere stationary, so
$S'$ is too. Next, for $\partial \in S'$ put
\[\Lambda'_\partial=\left\{
\begin{array}{ll}
\Lambda_\partial&\mbox{ if }\partial\in S_1 \backslash S_2,\\
\Lambda_\partial \cup\{\cI_{\partial,\alpha}:\alpha \in
(\partial_i,\partial_{i+1})\} &\mbox{ if }\partial = \partial_{i+1} \in S_1
\cap S_2,\\
\{\cI_{\partial,\alpha}:\alpha \in (\partial_i,\partial_{i+1})\}&\mbox{ if }
\partial = \partial_{i+1} \in S_2 \backslash S_1,
\end{array}\right.\]
and let $\bar\Lambda' = \langle \Lambda'_\partial:\partial \in S'\rangle$. 
Easily the triple $(\tr(p),S',\bar\Lambda')$ is a witness for some $q \in
\bbQ_\kappa$ and this $q$ is as required.
\end{PROOF}

\begin{claim}
\label{p28}
If $\kappa$ is inaccessible limit of inaccessibles and $\bold V_1$ is an
extension of $\bold V$ (e.g. a forcing extension) \then \, $\bold V_1
\models ``({}^\kappa 2)^{\bold V} \in \id(\bbQ_\kappa)$'' provided that at
least one of the following holds (each implying $\kappa$ is still an
inaccessible limit of inaccessibles in $\bold V_1$):
\begin{enumerate}
\item[(a)]  $\bold V_1 = \bold V^{\Cohen(\kappa)}$, see Definition
  \ref{z4}(2). 
\item[(b)]  In $\bold V_1$, $\kappa$ is still inaccessible and there are
  sequences $\bar\eta = \langle \eta_\partial:\partial \in S\rangle$,
  $\bar\alpha = \langle \alpha_\partial:\partial \in S\rangle$ such that 
\begin{enumerate}
\item[$(\alpha)$]  $S \subseteq \kappa$ is unbounded in $\kappa$,
\item[$(\beta)$]  $\partial \in S\quad \Rightarrow\quad \alpha_\partial =
  \sup(S\cap \partial) < \partial$, 
\item[$(\gamma)$]  $S$ is a set of inaccessibles (in $\bold V_1$ hence in
  $\bold V$), 
\item[$(\delta)$]  $\eta_\partial \in {}^\partial 2$, really just
  $\eta_\partial \rest (\alpha_\partial,\partial)$ matter, 
\item[$(\varepsilon)$]  if $\eta \in ({}^\kappa 2)^{\bold V}$ \then \, for
  unboundedly many $\partial \in S$ we have $\eta \rest
  (\alpha_\partial,\partial) \subseteq \eta_\partial$. 
\end{enumerate}
\item[(c)]  In $\bold V_1$, $\kappa$ is still inaccessible limit of
  inaccessibles but $\cH(\kappa)^{\bold V} \ne \cH(\kappa)^{\bold V_1}$.
\item[(d)]  Like clause (b) but
\begin{enumerate}
\item[$(\beta)'$]  $S$ is unbounded nowhere stationary in $\kappa$, 
\item[$(\delta)'$]  $\bar\Lambda = \langle \Lambda_\partial:\partial\in
  S\rangle$, $\Lambda_\partial$ a set $\le \partial$ dense subset of
  $\bbQ_\partial$, 
\item[$(\varepsilon)'$]  if $\eta \in ({}^\kappa 2)^{\bold V}$ \then
\, for unboundedly many $\partial \in S$, $\eta \rest \partial$ does not 
fulfill $\Lambda_\partial$.
\end{enumerate}
\end{enumerate}
\end{claim}

\begin{remark}
\label{p29}
Of course, if $\kappa$ is inaccessible not limit of inaccessibles then
the conclusion of \ref{p28} fails because $\bbQ_\kappa$ is equivalent
to Cohen$_\kappa$, see \ref{p5}.
\end{remark}

\begin{PROOF}{\ref{p28}}

\noindent
\underline{\bf Clause (a)}:\qquad It suffices to prove that the assumptions of
(b) holds.\\  
Clearly the forcing preserves inaccessibility. Let $\name\eta \in {}^\kappa
2$ be the name of the $\kappa$-Cohen real and let:
\begin{enumerate}
\item[$\bullet$]  $S_1 = \{\partial < \kappa:\partial$ inaccessible in
  $\bold V_1$ or $\bold V$, those are equivalent$\}$,
\item[$\bullet$]  $S = \{\partial \in S_1:\partial > \sup(S_1 \cap
\partial)\}$,
\item[$\bullet$]  $\name\eta_\partial = \name\eta \rest \partial$,
\item[$\bullet$]  $\alpha_\partial = \sup(S_1 \cap \partial)$ for
  $\partial \in S$.
\end{enumerate}
Clearly clauses $(\alpha),(\gamma)$ of (b) are satisfied by $S_1$ and by $S$
and clause $(\beta)$ is satisfied by the $\alpha_\partial$'s and $S$.  Also
recalling $\name\eta \in {}^\kappa 2$, it is the $\kappa$--Cohen real, the
derived sequence $\langle \name\eta_\partial:\partial \in S\rangle$
satisfies clause $(\delta)$ by our choice above. Lastly, clause
$(\varepsilon)$ holds as Cohen$_\kappa = ({}^{\kappa>}2,\vartriangleleft)$,
so all the assumptions of clause (b) hold indeed.  
\medskip

\noindent
\underline{\bf Clause (b)}:\qquad We work in $\bold V_1$.\\ 
For $\alpha<\partial \in S$ let 
\[\cI^*_{\partial,\alpha} = \big\{p\in\bbQ_\partial:\mbox{for some $\beta$ we
  have }\alpha<\beta<\partial,\ \beta<\ell g(\tr(p))\mbox{ and }\tr(p)\rest
(\alpha,\beta)\nsubseteq\eta_\partial\big\}.\] 
Easily $\cI^*_{\partial,\alpha}$ is a dense open subset of $\bbQ_\partial$
and let 
\[\cI = \big\{p \in \bbQ_\kappa:\mbox{for some $\gamma <\kappa$ we have } S
\backslash \gamma \subseteq S_p \mbox{ and }\partial \in S \backslash \gamma  
\ \Rightarrow\ \cI^*_{\partial,\alpha_\partial} \in
\Lambda_{p,\partial}\big\}.\]  
Clearly $\cI$ is a dense open subset of $\bbQ_\kappa$ and $p \in \cI
\ \Rightarrow \ \lim_\kappa(p) \cap ({}^\kappa 2)^{\bold V} = \emptyset$, so
${\bold V}\cap {}^\kappa 2\in\id_2(\bbQ_\kappa)$ and we are done (remember
\ref{p4d}(5)).  
\medskip

\noindent
\underline{\bf Clause (c)}:\qquad Let $S_1$ be the set of inaccessibles in
$\bold V_1$ which are $< \kappa$.  Let $\alpha < \kappa$ and $\nu$ be such
that $\nu \in ({}^\alpha 2)^{\bold V_1}$ but $\nu \notin ({}^\alpha
2)^{\bold V}$. 

Now let
\begin{enumerate}
\item[$\bullet$]  $S = \{\partial \in S_1:\partial > \alpha$ and
$\partial > \sup(S_1 \cap \partial)\}$,
\item[$\bullet$] $\cI_\partial = \{p \in \bbQ_\partial$: for some $\beta$ we
  have $\beta + \alpha \le \ell g(\tr(p))$ and $\langle \tr(p)(\beta+i): i <
  \alpha\rangle = \nu\}$ for $\partial \in S$,
\item[$\bullet$]  $\Lambda_\partial = \{\cI_\partial\}$ for $\partial
\in S$.
\end{enumerate}
Why is $\cI_\partial$ a dense subset of $\bbQ_\partial$ for every $\partial
\in S$?  Let $p_1 \in \bbQ_\partial$ and we shall find $p_2$ such that $p_1
\le_{\bbQ_\partial} p_2 \in \cI_\partial$.  Let $p_2 \in \bbQ_\partial$ be
such that $p_1 \le_{\bbQ_\partial} p_2$ and $\ell g(\tr(p_2)) \ge \alpha +
\sup\{\theta:\theta < \partial$ is inaccessible$\}$.  (Why such $p_2$
exists?  As $\partial \in S$ implies that $\partial$ is (strictly) above the
ordinal on the right).  But this implies $S_{p_2} = \emptyset$ hence there
is $p_3$ such that $p_2 \le_{\bbQ_\partial} p_3$ and $(\tr(p_3))(\alpha +
\ell g(\tr(p_2) + i) = \nu(i)$ for $i < \alpha$ hence $p_3 \in
\cI_\partial$.  Hence the assumptions of clause (d) hold, so the result
follows.  \medskip

\noindent
\underline{\bf Clause (d)}:\qquad  Like the proof of clause (b).
\end{PROOF}

\begin{remark}
\label{p30}
If $\kappa$ is inaccessible not limit of inaccessibles and $\bold V_1$
extends $\bold V$ and $\cH(\kappa)^{\bold V_1} \ne \cH(\kappa)^{\bold V}$
then $({}^\kappa 2)^{\bold V} \in \id(\Cohen_\kappa)^{\bold V_1}$ and 
$({}^\kappa 2)^{\bold V} \in \id(\bbQ_\kappa)^{\bold V_1}$.
\end{remark}

\begin{claim}
\label{u24}
Assume $\kappa$ is inaccessible limit of inaccessibles. Then
\begin{enumerate}
\item $\Vdash_{\bbQ_\kappa}{\bold V}\cap {}^\kappa 2\in \id(\bbQ_\kappa)$. 
\item $\bbQ_\kappa$ is asymmetric; that is, if $\bold V_1 \subseteq \bold 
V_2 \subseteq \bold V_3$, $\eta_\ell \in ({}^\kappa 2)^{\bold V_{\ell +1}}$ is 
$(\bbQ_\kappa,\name\eta_\kappa)$--generic over $\bold V_\ell$, for 
$\ell=1,2$, \then \, $\eta_1$ is not $(\bbQ_\kappa,
\name\eta_\kappa)$--generic over $\bold V_1[\eta_2]$. 
\item $\cov(\bbQ_\kappa)\leq \non(\bbQ_\kappa)$.
\end{enumerate}
\end{claim}

\begin{PROOF}{\ref{u24}}
(1)\quad Let $\langle \partial_\vare:\vare < \kappa\rangle$ list
$S_{\inc}^\kappa$ in increasing order and let $S = \{\partial_{\vare
  +1}:\vare<\kappa\}$.  For $\eta \in {}^\kappa 2$ and $\partial \in S$ let
$\Lambda_{\eta,\partial}$ be a family of $\leq\partial$ dense subsets of
$\bbQ_\partial$ such that 
\[\set(\Lambda_{\eta,\partial})=\big\{\rho \in {}^\partial 2: \mbox{ for
  arbitrarily large }\zeta < \partial\mbox{ we  have }\rho(\zeta) \ne
\eta(\partial + \zeta)\big\}.\]   
Define 
\[A_\eta=\big\{\nu \in {}^\kappa 2:(\forall^\infty\partial \in S)(\nu
\rest \partial \in \set(\Lambda_{\eta,\partial}))\big\}.\]
 Clearly, the set $A_\eta$ is $\kappa$--Borel. Note that  
\[\big\{p\in\bbQ_\kappa: (S\setminus \ell g(\tr(p)))\subseteq S_p\ \wedge\
(\forall \partial\in S)(\ell g(\tr(p)<\partial \ \Rightarrow\
\Lambda_{\eta,\partial}\subseteq \Lambda_{p,\partial})\big\}\]
is an open dense subset of $\bbQ_\kappa$. Hence,
\begin{enumerate}
\item[$(*)_1$]  for every $\eta \in {}^\kappa 2$ we have ${}^\kappa
  2\setminus A_\eta\in \id(\bbQ_\kappa)$.
\end{enumerate}
We are going to argue that 
\begin{enumerate}
\item[$(*)_2$]  $\Vdash_{\bbQ_\kappa} {\bold V}\cap A_{\name
    \eta}=\emptyset$. 
\end{enumerate}
So let $\nu\in {}^\kappa2$. Suppose that $p \in \bbQ_\kappa$ and $\xi < 
\kappa$.  Choose $\partial\in S$ such that $\partial > \xi,\ell g(\tr(p))$
and then pick $\rho \in p\cap {}^\partial 2$. Now
$\varrho=\rho \char 94 (\nu \rest \partial) \in p$ and 
\[p^{[\varrho]} \Vdash_{\bbQ_\kappa} \nu\rest\partial\notin 
\set(\Lambda_{\name{\eta},\partial}).\] 
By standard density arguments we conclude that 
\[\Vdash_{\bbQ_\kappa} \big(\exists^\infty \partial\in S \big) \big
(\nu\rest \partial\notin\set(\Lambda_{\name{\eta},\partial}) \big) \]    
and thus $\Vdash_{\bbQ_\kappa} \nu\notin A_{\name{\eta}}$. 
\medskip

\noindent (2)\quad Assume that $\eta_1$ is
$(\bbQ_\kappa,\name\eta)$--generic over $\bold V$ and $\name\eta_2$ is
$(\bbQ_\kappa,\name\eta)$--generic over $\bold V[\eta_1]$. 

It follows from $(*)_2$ of part (1) that 
\begin{enumerate}
\item[$(*)_3$]  ${\bold V}[\eta_1,\eta_2]\models\eta_1\notin A_{\eta_2}$. 
\end{enumerate}
Therefore, by $(*)_1$, $\eta_1$ is not $(\bbQ_\kappa,\name\eta)$--generic
over $\bold V[\eta_2]$.  
\medskip

\noindent (3)\quad  Let $S,\Lambda_{\eta,\partial}$ and $A_\eta$ for
$\partial\in S$, $\eta\in {}^\kappa 2$ be defined as in \ref{u24}(1). Then
${}^\kappa 2\setminus A_\eta\in \id(\bbQ_\kappa)$. For $\nu\in {}^\kappa 2$
let $A^\nu=\{\eta\in {}^\kappa 2: \nu\in A_\eta\}$. The argument in the end
of part (1) shows that for each $\xi<\kappa$ the set 
\[\big\{p\in\bbQ_\kappa: \big(\exists \partial\in S\setminus\xi \big)
\big (\forall\eta\in {\lim}_\kappa(p) \big) \big (\nu\rest\partial \notin
\set(\Lambda_{\eta,\partial}) \big) \big\}\]
is open dense in $\bbQ_\kappa$. Hence $A^\nu\in\id(\bbQ_\kappa)$. 
 
Now suppose that $X\subseteq {}^\kappa 2$ is such that $X\notin
\id(\bbQ_\kappa)$. We claim that then 
\[\bigcup\{{}^\kappa 2\setminus A_\eta:\eta\in X\}={}^\kappa 2.\] 
So suppose $\nu\in {}^\kappa 2$. Let $\eta\in X\setminus A^\nu\neq
\emptyset$. By the definition this implies $\nu\notin A_\eta$ and we are
done. 

In \cite{Sh:F1580} we note that generally for a nice enough $\bold i$
asymmetry implies $\cov({\bold i})\leq \non({\bold i})$. 
\end{PROOF}

\subsection{When does $\bbQ_\kappa$ add a Cohen real?}
\label{addCoh}

\begin{definition}
\label{p53}
Let $S_{\awc}$ be the class of inaccessible $\kappa$ such that 
($\awc$ stands for ``anti weakly compact") in
$\bold V^{\bbQ_\kappa}$ there is a Cohen $\kappa$--real over $\bold V$;
equivalently:
\begin{enumerate}
\item[$(*)$]  there is a sequence $\langle \cI_\alpha:\alpha <
\kappa\rangle$, $\cI_\alpha \subseteq \bbQ_\kappa$ such
that\footnote{so $\cI_\alpha$ is not necessarily dense and not
  necessarily open; \wilog \, $\cI_\alpha$ is an antichain (but not
  necessarily maximal).  Of course the $\varrho$ later is not
  necessarily constant.} 
 for every $p\in \bbQ_\kappa$ there is $\alpha < \kappa$ such that:\\
 for every $\beta \in (\alpha,\kappa)$ and $\varrho \in
 {}^{[\alpha,\beta)}2$ there is $q$ such that
\begin{enumerate}
\item[$\bullet$]  $p \le_{\bbQ_\kappa} q$,
\item[$\bullet$]  if $\gamma \in [\alpha,\beta)$ and
$\varrho(\gamma)=1$ then $q$ is above some member of $\cI_\gamma$,
\item[$\bullet$]  if $\gamma \in [\alpha,\beta)$ and $\varrho(\gamma)
= 0$ then $q$ is incompatible with every member of $\cI_\gamma$.
\end{enumerate}
\end{enumerate}
\end{definition}

\begin{claim}
\label{p56}
If $\kappa$ is (strongly inaccessible but) not Mahlo \then \, $\kappa
\in S_{\awc}$.
\end{claim}

\begin{PROOF}{\ref{p56}}
It is similar to \ref{q7-item2}(2), but let us elaborate. Choose a closed
unbounded subset $E$ of $\kappa$ disjoint to $S_{\inc}^\kappa$. Let $A$ be
$E$ or any unbounded subset of $\kappa$ such  that $\partial \in
S_{\inc}^\kappa\ \Rightarrow \ \partial > \sup(A \cap \partial)$. 

Define functions $F_0:{}^{\kappa >}2 \longrightarrow {}^{\kappa
  >}2$ and $F_1:\bbQ_\kappa \longrightarrow \bbQ_\kappa$  and
$F_2:\bbQ_\kappa \longrightarrow \Cohen_\kappa$ by
\begin{itemize}
\item $F_0(\eta)$ is the $\nu \in {}^{\kappa >}2$ of length $\otp(\ell
  g(\eta) \cap A)$ and 
\[\alpha < \ell g(\eta) \wedge \alpha \in A\quad \Rightarrow\quad
\nu(\otp(\alpha \cap A))= \eta(\alpha)\]
(for $\eta\in {}^{\kappa >}2$), 
\item $F_1(p) = \{F_0(\eta):\eta \in p\}$ (for $p\in \bbQ_\kappa$),
\item $F_2(p)=F_0(\tr(p)) = \tr(F_1(p))$ (for $p\in \bbQ_\kappa$). 
\end{itemize}
Now, 
\begin{enumerate}
\item[$(*)_1$]  if $p \in \bbQ_\kappa$ and $\Cohen_\kappa \models
  ``F_2(p) \trianglelefteq \nu$''  \ then \, for some $q \in \bbQ_\kappa$ we have
  $\bbQ_\kappa \models ``p \le q"$ and $F_2(q) = \nu$.
\end{enumerate}
[Why?  By the choice of $A$ and we prove this by induction on $\ell
g(\nu)$ as in \S1.]
\begin{enumerate}
\item[$(*)_2$]  If $p \in \bbQ_\kappa$ then $F_1(p) = \{\rho:\rho
\trianglelefteq F_0(\tr(p))$ or $F_0(\tr(p)) \vartriangleleft \rho \in
{}^{\kappa >}2\}$.
\end{enumerate}
[Why?  As in \S1 or the proof of \ref{p62}.]
\begin{enumerate}
\item[$(*)_3$]  if $\bbQ_\kappa \models ``p \le q"$ then
  $\Cohen_\kappa \models ``F_2(p) \trianglelefteq F_2(q)$''. 
\end{enumerate}
[Why?  Obvious.]

Together we are done
\end{PROOF}

\begin{claim}
\label{p62}
\begin{enumerate}
\item Assume that $W \subseteq S^\kappa_\pr$ (see \ref{p36a}) is stationary
  but not reflecting. \Then \, forcing with $\bbQ_\kappa$ adds a Cohen 
$\kappa$--real.
\item Above also $\Pr(\kappa)$ holds.
\end{enumerate}
\end{claim}

\begin{remark}
\label{p65}
We can replace the assumption of \ref{p62}(1) by 
\begin{enumerate}
\item[$(*)$]  there is a sequence $\bar{\cI} = \langle \cI_i:i <
  \kappa\rangle$ of dense open sets such that for no $\partial \in
  S^\kappa_{\inc}$ and $p\in\bbQ_\partial$ do we have $\cI_i \rest \partial$
  is predense in $\bbQ_\partial$ above $p$ for every $i\in [\ell
  g(\tr(p)), \partial)$ where $\cI_i\rest \partial = \{p \cap {}^{\partial
    >}2:p \in \cI_i$ satisfies $\ell g(\tr(p)) < \partial\}$.
\end{enumerate}
That is, if $(*)$ holds true, then $\bbQ_\kappa$ adds a $\kappa$--Cohen
real. We intend to return to it in \cite{Sh:F1199}. 
\end{remark}

\begin{PROOF}{\ref{p62}}
\noindent (1)\quad Let $W\subseteq S^\kappa_\pr$ be a non-reflecting
stationary set. Choose a sequence $\bar\rho = \langle \rho_\partial:\partial
\in W\rangle$ such that:  
\begin{enumerate}
\item[$(\bullet)_1$]  $\partial \in W \Rightarrow \rho_\partial \in
  {}^{\kappa >} 2$
\item[$(\bullet)_2$]  for each $\rho \in {}^{\kappa >} 2$ the set
  $\{\partial \in W: \rho_\partial = \rho\}$ is stationary.
\end{enumerate}
For every $\partial \in W$ we fix open dense sets
$\cI_\vare^\partial\subseteq \bbQ_\partial$ (for  $\vare<\partial$) such
that:  
\begin{enumerate}
\item[$(\bullet)_3$] if $p\in\bbQ_\partial$ \then\;
  $\lim_\partial(p)\nsubseteq \bigcap\limits_{\vare<\partial}
  \set(\cI_\vare^\partial )$.  
\end{enumerate}
Then for $\partial \in W$  we define 
\begin{enumerate}
\item[$(\bullet)_4$]  $A_\partial:= {}^\partial 2\setminus 
  \bigcap\limits_{\vare<\partial} \set(\cI_\vare^\partial )$.
\end{enumerate}
Clearly, 
\begin{enumerate}
\item[$(\bullet)_5$] $A_\partial\in\id(\bbQ_\partial)$ but $\lim_\partial(p)
  \cap A_\partial\neq\emptyset$ for every $p \in \bbQ_\partial$. 
\end{enumerate}
Now, 
\begin{enumerate}
\item[$(\bullet)_6$]    for $\partial \in W$ we can find a partition
  $(A^1_\partial,A^2_\partial)$ of $A_\partial$ such that: for every
  $p \in \bbQ_\partial$ we have $\lim_\partial(p) \cap A^\ell_\partial \ne
  \emptyset$ for $\ell=1,2$, equivalently for every $\cX \in
  \id(\bbQ_\partial)$ and $p \in \bbQ_\partial$, $\lim_\partial(p) \cap
  A^\ell_\partial \ne \emptyset$ for $\ell=1,2$.
\end{enumerate} 
[Why?  Since $\bbQ_\partial$ has cardinality $2^\partial$ and
$\id(\bbQ_\partial)$ is generated by $2^\partial$ sets, it is enough to
prove that for every $p \in \bbQ_\partial$ and $\cX = {}^\partial 2 \setminus  
\set(\bar{\cI}) \in \id(\bbQ_\partial)$, where $\bar{\cI}$ is a sequence of
$\partial$ maximal antichains of $\bbQ_\partial$, the set $\cX \cap
\lim_\partial(p) \cap A_\partial$ has cardinality $2^\partial$. \Wilog \, 
$(S_\partial,\bar\Lambda_\partial, \bar p_\partial,\bar{\cI}_\partial)$ is
as in \ref{p59}.  Given $p$ and $\cX$, i.e. $(S_\partial,
\bar\Lambda_\partial, \bar p_\partial,\bar{\cI}_\partial)$ we let $E$ be a
club of $\partial$ disjoint to $S_p,S_\partial$ and $W$ and to $[0,\ell
g(\tr(p))$.  So consider the tree $\cT = (\bigcup\limits_{\alpha \in
  E}{}^\alpha 2) \cup {}^\kappa 2$. Recall $p \cap \cT$ is a really closed
subtree and for each $\vare < \partial$, $\langle p \cap \cT_\partial:p \in 
\cI_{\partial,\vare}\rangle$ is a sequence of closed subtrees with no
maximal nodes such that $\lim_\partial(p) = \lim(p \cap \cT_\gamma)$ are 
pairwise disjoint.  The rest should be clear.]  

We let $\name\ell_\partial$ be a $\bbQ_\kappa$--name for an element of
$\{0,1,2\}$ such that 
\begin{enumerate}
\item[$(\bullet)_7$]  $\Vdash_{\bbQ_\kappa}$ ``$\name\ell_\partial =\iota$
  \Iff \, $\name\eta \rest \partial\in A_\partial^\iota$'' for $\iota=1,2$
  and  $\Vdash_{\bbQ_\kappa} ``\name\ell_\partial =0$ \Iff 
  \, $\name\eta \rest \partial\notin A_\partial"$.
\end{enumerate}
Lastly, let $\name\nu$ be (the $\bbQ_\kappa$--name for) the concatenation of    
$\langle\rho_\partial: \partial \in W$ and $\name\ell_\partial=2\rangle$.
We will argue that $\Vdash_{\bbQ_\kappa}
``\name\nu$ is Cohen over $\bold V"$. To this end we will prove that:
\begin{enumerate}
\item[$(\boxplus)$]  if $p \in \bbQ_\kappa$, $\partial \in W$, $\partial >
  \ell g(\tr(p))$ \then \, there is $\tau\in p\cap {}^\partial 2$ such that: 
\begin{enumerate}
\item[(a)]  $\tau\in A_\partial^2$, equivalently $p^{[\tau]}
  \Vdash``\name\ell_\partial =2"$, 
\item[(b)]  if $\theta \in W \cap \partial$, $\theta > \ell g(\tr(p))$
  \then \, $\tau\rest \theta\notin A_\theta^2$, equivalently $p^{[\tau]}
  \Vdash$`` $\name\ell_\theta$ is 0 or is 1''.
\end{enumerate}
\end{enumerate}

\noindent
\underline{Why is $(\boxplus)$ enough}?  Recalling \ref{p31}, let
$(\eta,\bar\alpha)$ be as there, and we shall show that
$\Vdash_{\bbQ_\kappa} ``\name{\nu} \notin X_{\eta,\bar{\alpha}}"$.  Let $p
\in\bbQ_\kappa$, $j<\kappa$ and let $\nu_*$ be the concatenation of  
\[\big\{\rho_\partial:\partial \in W,\ \partial \le \ell g(\tr(p)) 
\text{ and } \tr(p)\rest\partial\in A_\partial^2\big\}.\]
Let $\rho_* \in {}^{\kappa >}2$ be such that for some $i \in
[j,\kappa)$ we have 
\begin{enumerate}
\item[$(\bullet)_8$]  $\nu_*\char 94 \rho_*$  has length $\ge \alpha_{i+1}$
  and it does include $\eta \rest [\alpha_i,\alpha_{i+1})$''. 
\end{enumerate}
Clearly it suffices to prove that for some $q$:
\begin{enumerate}
\item[$(\bullet)_9$]  $p \le_{\bbQ_\kappa} q$ and $q \Vdash ``\nu_* \char 94
  \rho_*\trianglelefteq \name\nu"$.  
\end{enumerate}
By the choice of $\bar\rho$, the set $W' = \{\partial \in W:\partial\notin
S_p,\ \partial >\ell g(\tr(p))$ and $\rho_\partial = \rho_*\}$ is a
stationary subset of $\kappa$.  Pick $\partial_* \in W'$ and then
choose $\tau \in p \cap {}^{\partial_*}2$ as in (a),(b) of $(\boxplus)$.
Let $q =p^{[\tau]}$. 

So the conclusion of \ref{p62} follows and $(\boxplus)$ is indeed 
enough, but we still owe: 
\medskip

\noindent
\underline{Why $(\boxplus)$ is true}?  Let  $p \in \bbQ_\kappa$ as
witnessed by $(\tr(p),S_p,\bar{\Lambda}_p)$, and let $\partial \in W$,
$\partial > \ell g(\tr(p))$. Put  
\begin{itemize}
\item $\tr(q)=\tr(p)$,
\item $S_q=S_p\cup (W\cap\partial)$, and 
\item if $\theta\in S_q\setminus S_p$, then  $\Lambda_{q,\theta}=
  \{\cI^\theta_\vare: \vare<\theta\}$, and 
\item if $\theta\in S_p\cap (W\cap\partial)$, then $\Lambda_{q,\theta}
  =\Lambda_{p,\theta}\cup \{\cI^\theta_\vare: \vare<\theta\}$. 
\end{itemize}
This determines a condition $q\in\bbQ_\kappa$ stronger than $p$.  It follows
from the definition of $\bar{\Lambda}_q$ and $S_q$ that 
\begin{enumerate}
\item[$(\bullet)_{10}$]  if $\ell g(\tr(q))<\theta\in W\cap \partial$, then
  $q\cap {}^\theta 2\subseteq \set(\Lambda_{q,\theta})\subseteq {}^\theta 2
  \setminus A_\theta$. 
\end{enumerate}
Anyhow by $(\bullet)_6$ we are done.
\medskip

\noindent (2)\quad Let $A_\partial^\iota$ for $\partial \in W$ be as in (1)
above such that
\begin{enumerate}
\item[$(\bullet)_{11}$]   $\eta\in A^2_ \partial$ implies that
  $\{\alpha<\partial: \eta(\alpha)=1\}$ is stationary. 
\end{enumerate}
For $\alpha<\kappa$ define 
\[\cI_\alpha=\{p\in \bbQ_\kappa: \ell g(\tr(p))>\alpha \ \mbox{ and for
  some }\partial\in (\alpha,\ell g(\tr(p)))\cap W\ \mbox{ we have }\
\tr(p)\rest \partial\in A_\partial^2\}.\]
Clearly each $\cI_\alpha$ is a dense open subset of $\bbQ_\kappa$. We will
argue that $\langle\cI_\alpha:\alpha<\kappa\rangle$ witnesses $\Pr(\kappa)$, 
that is we show that for each $p\in\bbQ_\kappa$ we have
$\lim_\kappa(p)\nsubseteq \bigcap\limits_{\alpha<\kappa} \set(\cI_\alpha)$.  

Let $p\in\bbQ_\kappa$ be witnessed by $(\eta,S,\bar{\Lambda})$ and let
$\alpha=\ell g(\eta)$. We will show that $\lim_\kappa(p)\nsubseteq
\set(\cI_{\alpha+1})$. Towards this let $E$ be a club of $\kappa$ disjoint
from $S$ with $\min(E)=\alpha=\ell g(\tr(p))$ and
\[\min(E)<\alpha\in E \wedge \alpha >\sup(\alpha\cap E )\quad
\Rightarrow\quad \alpha \mbox{ is singular.}\] 
Let $\langle\alpha_i:i<\kappa\rangle$ be an increasing enumeration of
$E$. By induction on $i<\kappa$ we choose $\eta_i$ so that 
\begin{enumerate}
\item[$(*)_i$] 
  \begin{enumerate}
\item[(a)] $\eta_i\in p\cap {}^{(\alpha_i)} 2$,
\item[(b)] $j<i\ \Rightarrow\ \eta_j\vartriangleleft \eta_i\wedge \eta _i (
  \alpha_i)=0$,  
\item[(c)] if $\partial \in W\cap (\alpha_0,\alpha_i]$, then
  $\eta_i\rest \partial \notin A_\partial^2$.  
  \end{enumerate}
\end{enumerate}
\underline{This is enough} as letting $\eta=\bigcup\limits_{i<\kappa}
\eta_i$ we will have $\eta\in \lim_\kappa(p)\setminus
\set(\cI_{\alpha+1})$. 

\noindent \underline{Why can we carry out the induction?}

For $i=0$ we put $\eta_0=\tr(p)$,

for a limit $i$ we put $\eta_i=\bigcup_{i<j}\eta_j$ noting that if $ \alpha _ i\in W$ 
then $\eta _i$ is not in $A^2_{\alpha _i}$ by $(\bullet)_{11}$,

for a successor $i=j+1$ we proceed as in the proof of $(\boxplus)$ of the
first part recalling $\alpha _i \notin W$.  
\end{PROOF}

\begin{claim}
\label{p68}
\begin{enumerate}
\item The assumption of \ref{p62}(1) holds when $\bold V = \bold L$ and
  $\kappa$ is Mahlo not weakly compact.   
\item When the assumption of \ref{p68}(1) or of \ref{p62}(1) hold for 
$\kappa$, \then 
\[\cov(\bbQ_\kappa)\leq \cov(\Cohen_\kappa)\ \mbox{ and }\ 
\cov(\bbQ_\kappa) \leq \non(\Cohen_\kappa)\leq \non(\bbQ_\kappa).\]  
\end{enumerate}
\end{claim}

\begin{remark}
\label{p68a}
\begin{enumerate}
\item So when \ref{p68}(1) applies, the Cicho\'n diagram for
  $\id(\Cohen_\kappa)$ and $\id(\bbQ_\kappa)$ is very different than the
  $\kappa = \aleph_0$ case, i.e., we have additional inequalities.  
\item In \ref{p68}(1), note that if  $\kappa$ is inaccessible not Mahlo
  then the conclusion of \ref{p62}(1) holds by \ref{p56}.
\end{enumerate}
\end{remark}

\begin{PROOF}{\ref{p68}}
  1)\quad Since $\kappa$ is Mahlo not weakly compact, by a result of Jensen
  we know that every stationary subset of $\kappa$ contains a non-reflecting
  stationary subset. So we may use Observation \ref{obs30}(4) and argue that
  again we are in the case of \ref{p62}(1).   
\medskip

\noindent
2)\quad It follows from \ref{p62}, that there is a  $\bbQ_\kappa$--name
$\name\varrho$ such that for some Borel function $\bold B:{}^\kappa
2\longrightarrow {}^\kappa \kappa$ we have 
\begin{enumerate}
\item[$(*)_1$]  $\Vdash_{\bbQ_\kappa} ``\name\varrho$ is a $\kappa$--Cohen
  real over ${\bold V}$ and $\name\varrho = \bold B(\name\eta)$''.
\end{enumerate}
Hence
\begin{enumerate}
\item[$(*)_2$] $\cov(\bbQ_\kappa)\leq\cov(\Cohen_\kappa)$
\end{enumerate}
Why? Let $\mu=\cov(\Cohen_\kappa)$ and let $\langle X_\zeta:\zeta<\mu
\rangle$ be a sequence of $\kappa$--meagre $\kappa$--Borel sets with union
${}^\kappa 2$. Let $\bB_\zeta\in\id(\bbQ_\kappa)$ be such that 
\[\eta\in {}^\kappa2\setminus \bB_\zeta\quad\Rightarrow\quad \bB(\eta)\notin
X_\zeta.\] 
We claim that then $\bigcup\limits_{\zeta<\mu} \bB_\zeta={}^\kappa 2$. If
not, then we may pick $\eta\in {}^\kappa 2\setminus
\bigcup\limits_{\zeta<\mu} \bB_\zeta$. But now, for every $\zeta<\mu$,
$\bB(\eta)\notin X_\zeta$, so $\bigcup\limits_{\zeta<\mu} X_\zeta\neq
{}^\kappa 2$ --- a contradiction. 
\medskip

Similarly,
\begin{enumerate}
\item[$(*)_3$]  $\non(\Cohen_\kappa)\leq \non(\bbQ_\kappa)$.
\end{enumerate}
Why? Let $\{\eta_\zeta:\zeta<\mu\}\subseteq {}^\kappa 2$ be a set not
belonging to $\id(\bbQ_\kappa)$. Then $\{\bB(\eta_\zeta):\zeta<\mu\}$
exemplifies $\non(\Cohen_\kappa)\leq\mu$. 

Also, 
\begin{enumerate}
\item[$(*)_4$]  $\cov(\bbQ_\kappa)\leq \non(\Cohen_\kappa)$.
\end{enumerate}
Why? By \ref{p39}(1), noting that its assumption
``$\kappa = \sup(S^\kappa_{\inac})$'' follows by our present assumptions.
\end{PROOF}

\begin{claim}
\label{p68fromF1580}
If $\bold V = \bold L$, then an inaccessible $\kappa$ satisfies $\Pr(\kappa)$ iff
$\kappa$ is not weakly compact iff $\bbQ_\kappa$ adds a $\kappa$-Cohen. 
\end{claim}

\begin{PROOF}{\ref{p68fromF1580}}
We prove this by considering possible cases.
\medskip

\noindent {\sc Case 1}: $\kappa$ is not Mahlo.\\
Then 
\begin{enumerate}
\item[(a)] $\kappa$ is not weakly compact,
\item[(b)] $\bbQ_\kappa$ add a $\kappa$--Cohen real by \ref{p56},
\item[(c)] $\Pr(\kappa)$ holds by \ref{obs30}(1).
\end{enumerate}
\medskip

\noindent {\sc Case 2}: $\kappa$ is Mahlo not weakly compact.\\ 
By \ref{obs30}(4), $S^\kappa_\pr$ is a stationary subset of $\kappa$. By a
result of Jensen there is a stationary $W\subseteq S^\kappa_\pr$ which does
not reflect. Hence by \ref{p62} the forcing notion $\bbQ_\kappa$ adds a
$\kappa$--Cohen real and $\Pr(\kappa)$ holds true. 
\medskip

\noindent {\sc Case 3}: $\kappa$ is weakly compact.\\ 
Then $\bbQ_\kappa$ is $\kappa$--bounding hence does not add a $\kappa$-Cohen 
by \ref{n13} and $\Pr(\kappa)$ fails by \ref{obs30}(2), i.e., \ref{p8}(2). 
\end{PROOF}

\section{What about the parallel to ``amoeba forcing''?}
\label{amoeba}

\begin{definition}
\label{p84leftover}
\begin{enumerate}
\item We say that $\cJ\subseteq\bbQ$ is nice if $\cJ^{[\alpha,\pi]}
  \subseteq \cJ$ for every $\alpha<\kappa$ and a permutation $\pi: {}^\alpha
  2\longrightarrow {}^\alpha 2$ (remember \ref{p84}(2)).
\item We say that a family $\Lambda$ of subsets of $\bbQ_\kappa$ is nice
  \when \,: $\Lambda^{[\alpha]} \subseteq \Lambda$ for every $\alpha <
 \kappa$ (remember \ref{p84}(3)).\\
(Equivalently,  if $\cI_1 \in \Lambda$, $\cI_2 \subseteq \bbQ_\kappa$,
$\alpha<\kappa$ and $\cI^{[\alpha,\pi]}_1 = \cI_2$ \then \, $\cI_2 \in
\Lambda$). 
\item  For $p \in \bbQ_\kappa$ let $\nb(p) = \{p^{[\eta,\nu]}:\eta \in p
   \cap {}^\alpha 2,\nu \in {}^\alpha 2$ for some $\alpha < \kappa\}$.
\end{enumerate}

\end{definition}

\begin{claim}
\label{p87leftover}
If $\Lambda \subseteq \{\cI:\cI \subseteq \bbQ_\kappa$ is predense$\}$
has cardinality $\le \kappa$ \then \, so is $\Lambda^{[< \kappa]}$ and it is
nice.
\end{claim}

\begin{PROOF}{\ref{p87leftover}}
It follows from \ref{p87}.
\end{PROOF}

\begin{claim}
\label{p90}
\begin{enumerate}
\item If $p \in \bbQ_\kappa$ \then \, $\nb(p)$ is a predense subset of 
$\bbQ_\kappa$.
\item If $p \in \bbQ_\kappa$ \then \, $\nb(p)$ is nice and 
\[\set(\nb(p))=\big\{\eta \in {}^\kappa 2:\mbox{ there is }\nu \in
{\lim}_\kappa(p)\mbox{ such that }(\forall^\infty \alpha <
\kappa)(\eta(\alpha) = \nu(\alpha))\big\}.\] 
\item {[$\kappa$ weakly compact]}\quad If $X \in \id(\bbQ_\kappa)$ \then \,
  for a dense set of $p \in \bbQ_\kappa$ we have $\set(\nb(p)) \subseteq
{}^\kappa 2 \backslash X$.
\end{enumerate}
\end{claim}

\begin{PROOF}{\ref{p90}}
(1)\quad Clearly for every $p,q \in \bbQ_\kappa$ we can choose $\alpha \ge
\max\{\ell g(\tr(p),\ell g(\tr(q))\}$ such that $\alpha < \kappa$ and then
choose $\eta \in p \cap {}^\alpha 2,\nu \in q \cap {}^\alpha 2$ and $\pi
\in \Sym({}^\alpha 2)$ such that $\pi(\eta)= \nu$, so $q_1 =
p^{[\eta,\nu]} \in \nb(p)$ and $q_1,q$ have a common member $\nu$ which
is of length $\ge \ell g(\tr(q_1)),\ell g(\tr(q))$, hence $q_1,q$ are
compatible.  
\medskip

\noindent
(2)\quad Should be clear.
\medskip

\noindent
(3)\quad There is a family $\Lambda$ of $\le \kappa$ maximal antichains of 
$\bbQ_\kappa$ such that $X \cap \set(\Lambda) = \emptyset$.  \Wilog \,
$\Lambda = \Lambda^{[< \kappa]}$ and hence the set $Y = {}^\kappa 2
\backslash \set(\Lambda) \in \id(\bbQ_\kappa)$ satisfies:
\begin{itemize}
\item  if $\eta_1,\eta_2 \in {}^\kappa 2$ and $\kappa > \sup\{\alpha <
  \kappa:\eta_1(\alpha) \ne \eta_2(\alpha)\}$, then $\eta_1 \in Y
  \Leftrightarrow \eta_2 \in Y$.
\end{itemize} 
Now, as $Y \in \id(\bbQ_\kappa)$ by \ref{p8}(2) for a dense set of $p \in  
\bbQ_\kappa$, $\lim_\kappa(p)$ is disjoint to $Y$, but by the choice of $Y$
this holds for any $p' \in \nb(p)$, so we are done. 
\end{PROOF}

\begin{definition}
\label{p83}
Let $\bbQ^{\am}_\kappa$ be the following forcing notion:
\begin{enumerate}
\item[(A)]  a member of $\bbQ^{\am}_\kappa$ has the form $(\alpha,p,E)$ with
  $\alpha < \kappa,p \in \bbQ_\kappa,E$ a club of $\kappa$ disjoint to $S_p$
  and $\tr(p) = \langle \rangle$, 
\item[(B)]  the order on $\bbQ^{\am}_\kappa$ is: $(\alpha_1,p_1,E_1) \le
(\alpha_2,p_2,E_2)$ \Iff \,
\begin{enumerate}
\item[(a)]  $\alpha_1 \le \alpha_2$, 
\item[(b)]  $p_1 \le_{\bbQ_\kappa} p_2$,
\item[(c)]  $p_1 \cap {}^{(\alpha_1)}2 = p_2 \cap {}^{(\alpha_1)}2$, 
\item[(d)]  $E_1 \supseteq E_2$ and $E_1 \cap \alpha_1 = E_2 \cap
  \alpha_1$.
\end{enumerate}
\item[(C)]  The generic of $\bbQ^{\am}_\kappa$ is $\name p_\kappa =
  \bigcup\{p \cap {}^{\alpha \ge}2:(\alpha,p,E) \in \name{\bold
    G}_{\bbQ^{\am}_\kappa}\}$. 
\end{enumerate}
\end{definition}

\begin{claim}
\label{p97}
\begin{enumerate}
\item $\bbQ^{\am}_\kappa$ is a $\kappa$--strategically complete
  $\kappa^+$--cc (nicely definable) forcing notion and $\name p_\kappa$ is
  indeed a generic for $\bbQ^{\am}_\kappa$. 
\item $\Vdash_{\bbQ^{\am}_\kappa}$``$\name p_\kappa \in \bbQ_\kappa$''. 
\item Assume $\kappa$ is weakly compact. If $\cI$ is a predense subset of
  $\bbQ_\kappa$ (in $\bold V$) \then \, $\Vdash_{\bbQ^{\am}_\kappa}
  ``\set(\cI) \supseteq \set(\nb(\name p_\kappa))"$.
\item  Assume $\kappa$ is weakly compact. Then $\Vdash_{\bbQ^{\am}_\kappa}
  ``{}^\kappa 2 \backslash \set(\nb(\name p_\kappa)) \subseteq {}^\kappa 2$
  is a  member of $\id(\bbQ_\kappa)$ including all the old $\kappa$--Borel
  sets from $\id(\bbQ_\kappa)"$. 
\end{enumerate}
\end{claim}

\begin{PROOF}{\ref{p97}}
(1)\quad Easy.
\medskip

\noindent
(2)\quad Recall that for every $p \in \bbQ_\kappa$ there is a canonical
   witness $(\tr(p),S_p,\bar\Lambda_p)$ (see \ref{n5}(C)(a)). Let us define
   some $\bbQ^{\am}_\kappa$--names: 
\begin{enumerate}
\item[$(*)_1$]  
\begin{enumerate}
\item[(a)] $\name E = \bigcap\{E_p:p \in \name{\bold G}\}$,
\item[(b)] $\name S = \bigcup\{\name S_p:p \in \name{\bold G}\}$, 
\item[(c)] for every $\partial \in \name S$, $\name\Lambda_\partial =
  \bigcup\{\Lambda_{p,\partial}:p \in \name{\bold G}$ satisfies $\partial
  \in S_p\}$,
\item[(d)] $\name{\bar\Lambda} = \langle \name\Lambda_\partial:\partial \in
  \name S\rangle$, 
\item[(e)] $\name\varrho$ is $\langle\rangle$. 
\end{enumerate}
\end{enumerate}
Now,
\begin{enumerate}
\item[$(*)_2$]  for every $\beta < \kappa$, the set 
\[\cI_\beta := \big\{(\alpha,p,E) \in \bbQ^{\am}_\kappa:\alpha \ge
\beta\big\}\] 
is a dense open subset of $\bbQ^\am_\kappa$.
\end{enumerate}
[Why?  If $\beta < \kappa$ and $(\alpha_1,p_1,E_1) \in\bbQ^{\am}_\kappa$
then $(\alpha_1 + \beta,p_1,E_1) \in \bbQ^{\am}_\kappa$ is above
$(\alpha_1,p_1,E_1)$ and belongs to $\cI_\beta$.]

\begin{enumerate}
\item[$(*)_3$]  $\Vdash ``\name E$ is a club of $\lambda$".
\end{enumerate}
[Why?  Unbounded as for every $\beta < \kappa$ and $(\alpha_0,p_0,E_0) \in
\bbQ^{\am}_\kappa$, let $\alpha_1 = \min\{\delta \in E_0:\delta >
\alpha_0,\delta > \beta\}$ so $(\alpha_1 +1,p_0,E_0)$ is above
$(\alpha_0,p_0,E_0)$ and forces $\delta \in \name E$.  Being closed is easy,
too.]

\begin{enumerate}
\item[$(*)_4$]  $\Vdash ``\name S$ is a nowhere stationary subset of 
$S^\kappa_{\inc}"$.
\end{enumerate}
[Why?  First, for every $\beta < \kappa$, by $(*)_2$ for a dense set
of $(\alpha,p,E) \in \bbQ^{\am}_\kappa$ we have $\alpha > \beta$. Since 
$(\alpha,p,E) \Vdash ``\name S \cap \alpha = S_p \cap \alpha"$, we get  
that $\name S \cap \alpha$ is nowhere stationary and hence $\name S \cap 
\beta$ is nowhere stationary.  Second, $\Vdash ``\name S$ is not stationary"
because $\Vdash ``\name E$ is a club of $\kappa$ disjoint to $\name S$" by
the definition of $\bbQ^{\am}_\kappa$.  Together we are done.]

\begin{enumerate}
\item[$(*)_5$]  $\Vdash ``\name\Lambda_\partial$ is a set of $\le \partial$
  predense subsets of $\bbQ_\partial$ for $\partial\in\name{S}$''. 
\end{enumerate}
[Why?  Given $(\alpha_0,p_0,E_0) \in \bbQ^{\am}_\kappa$, \wilog \, 
$\alpha_0 > \partial$ and hence it forces $\name\Lambda_\partial$
is $\Lambda_{p_0,\partial}$ if $\partial \in S_{p_0}$, not defined (or
$\emptyset$) otherwise; the rest is clear.]

\begin{enumerate}
\item[$(*)_6$]  $\Vdash ``(\name\varrho,\name S,\name{\bar\Lambda} )$
  witnesses $\name p_\kappa \in \bbQ_\kappa$.
\end{enumerate}
[Why?  Read \ref{p83}(C) and $(*)_3$--$(*)_5$.]
\medskip

\noindent
(3)\quad It suffices to prove the following: 
\begin{enumerate}
\item[$(*)_1$]   if $\alpha < \kappa$ and $\eta \in {}^\alpha 2$, $\nu \in
  {}^\alpha 2$ then 
\[\Vdash_{\bbQ^{\am}_\kappa}\mbox{ `` if }\eta \in 
  \name p_\kappa \cap {}^\alpha 2\mbox{ then }\lim( \name{p}^{[\eta,\nu]})  
  \subseteq \set(\cI)\mbox{ ''.}\]
\end{enumerate}
Now, 
\begin{enumerate}
\item[$(*)_2$]   fixing $\alpha$, \wilog \, for every $\pi \in 
\Sym({}^\alpha 2)$ we have $\cI^{[\alpha,\pi]} = \cI$.
\end{enumerate}
\mn
[Why?  Let $\cI_1 = \{p \in \bbQ_\kappa$: for every $\pi \in 
\Sym({}^\alpha 2)$, $p$ is above some member of $\cI^{[\alpha,\pi]}\ \}$.
Clearly: 
\begin{itemize}
\item  $\cI_1 \subseteq \bbQ_\kappa$ is predense,
\item  $\cI^{[\alpha,\pi]}_1 = \cI_1$ for every $\pi \in
  \Sym({}^\alpha 2)$,
\item  $\set(\cI_1) \subseteq \set(\cI)$.
\end{itemize}
Hence we can replace $\cI$ by $\cI_1$ so finishing the proof of $(*)_2$.]

So
\begin{enumerate}
\item[$(*)_3$]   in $(*)_1 + (*)_2$, \wilog \, $\nu = \eta$ so 
$\name p^{[\nu,n]}_\kappa = \name p_\kappa$.
\end{enumerate}
Let 
\begin{enumerate}
\item[$(*)_4$] $(\alpha_0,p_0,E_0) \in \bbQ^{\am}_\kappa$ and $\eta \in
  {}^{\alpha}2$.  
\end{enumerate}
We shall find $(\alpha_1,p_1,E_1) \in \bbQ^{\am}_\kappa$ above
$(\alpha_0,p_0,E_0)$ and  forcing that $\eta \notin \name p_\kappa$ or
forcing the statement in $(*)_1$.  First, by $(*)_2$ of the proof of part
(2), \wilog \, $\ell g(\eta) < \alpha_0$; so if $\eta \notin p_0$ then
$(\alpha_0,p_0,E_0) \Vdash ``\eta \notin \name p_\kappa"$ and we are
done. So we can assume $\eta \in p_0$. 

As $\kappa$ is weakly compact for some $\partial \in S^\kappa_{\inc}$
which is $>\alpha_0$ we have:
\begin{enumerate}
\item[$(*)_5$]  the set 
\[\cI_\partial = \big\{q \cap {}^{\partial >}2:q \in \cI\mbox{ and }\ell
g(\tr(q)) < \partial\big\}\]
is predense  in $\bbQ_\partial$.
\end{enumerate}
Next,
\begin{enumerate}
\item[$(*)_6$]  for every $\nu \in \set(\cI_\partial) \cap p_0$ choose
  $q_\nu \in \cI$ such that $\ell g(\tr(q_\nu)) < \partial$ and $\nu \in
  {\lim}_\partial (q_\nu \cap {}^{\partial >}2)$ equivalently $\nu \in q_\nu
  \cap {}^\partial 2$. 
\end{enumerate}
Let
\begin{enumerate}
\item[$(*)_7$]  
\begin{enumerate}
\item[(a)]  $S' = \bigcup\{S_{q_\nu} \backslash \partial:\nu \in
  \set(\cI_\partial) \cap p_0\} \cup S_{p_0} \cup \{\partial\}$,
\item[(b)]  for $\theta \in S'$ let $\Lambda'_\theta$ be:
\begin{enumerate}
\item[$(\alpha)$] $\bigcup \big\{\Lambda:\Lambda =\Lambda_{q_{\nu,\theta}}$ and
  $\nu\in \set(\cI_\partial) \cap p_0$ and $\theta \in S_{q_\nu} \backslash \partial^+$ \underline{or}
 $\Lambda = \Lambda_{p_0,\theta}$ and $\theta\in S_{p_0}\big\}$\qquad if $\theta \in
S' \backslash \partial^+$,
\item[$(\beta)$]  $\Lambda_{p_0,\theta} \cup\{\cI_\partial\}$\qquad if
  $\theta = \partial \wedge \partial \in S_{p_0}$, 
\item[$(\gamma)$] $\{\cI_\partial\}$\qquad if $\theta \in \partial
  \wedge \partial \notin S_{p_0}$,
\item[$(\delta)$] $\Lambda_{p_0,\theta}$\qquad if $\theta \in S_{p_0}
  \cap \partial$. 
\end{enumerate}
\end{enumerate}
\end{enumerate}
Let $p_1 \in \bbQ_\kappa$ be defined by
\begin{enumerate}
\item[$(*)_8$]   $(\langle\rangle,S',\bar\Lambda')$ will witness $p_1$,
  where  
\begin{enumerate}
\item[$\bullet$]  $S'$ is from $(*)_7$,
\item[$\bullet$]  $\bar\Lambda' = \langle \Lambda'_\theta:\theta \in
  S'\rangle$, see $(*)_7$,
\end{enumerate}
\end{enumerate}
and let $\alpha_1=\alpha_0$ and $E_1\subseteq E_0$ be a club disjoint from 
$S'$ and such that $E_1\cap\partial =E_0\cap \partial$. Now one easily
verifies that $(\alpha_1,p_1,E_1)\in \bbQ^{\am}_\kappa$ is a condition
stronger than $(\alpha_0,p_0,E_0)$ and it forces that  
\[\eta \vartriangleleft \nu \in \name p_\kappa \cap {}^\partial 2\
\Rightarrow \ (\exists q \in \cI)(\tr(q) \vartriangleleft \nu \in q   \wedge
\name p^{[\nu]}_\kappa \subseteq q).\] 
\medskip

\noindent
(4)\quad Follows by part (3).
\end{PROOF}
\bigskip

\section {Generics and Absoluteness}
\label{absolut}

Recall from Definition \ref{y6} that we say that a set $\bold B \subseteq
{}^\kappa \cH(\kappa)$ is 
\begin{itemize}
\item a $\kappa$--stationary Borel if for some $\kappa$--Borel function
$F:{}^\kappa \cH(\kappa) \longrightarrow \cP(\kappa)$  we have $\eta \in B 
  \Leftrightarrow F(\eta)$ is stationary,
\item $\kappa$--nowhere stationary Borel if there is a $\kappa$--Borel
function   $F:{}^\kappa \cH(\kappa) \longrightarrow \cP(\kappa)$  such that
for every  $\eta \in {}^\kappa \cH(\kappa)$ we have: $\eta \in {\bold B}$
\Iff \, $F(\eta)$ is  a nowhere stationary subset of $\kappa$.  
\end{itemize}

\begin{claim}
\label{k3}
\begin{enumerate}
\item ``$p \in \bbQ_\kappa$" is\footnote{Using coding it does not matter
    whether we use ${}^\kappa 2$ or $\cP(\kappa)$ or ${}^\kappa \cH(\kappa)$
    or $\cP(\cH(\kappa))$, etc.} a $\kappa$--nowhere stationary Borel
  relation (see \ref{y6}(5)),  also it is $\Sigma^1_1(\kappa)$.
\item Both ``$p \le_{\bbQ_\kappa} q$'' and ``$p,q \in \bbQ_\kappa$ are
  compatible'' are $\kappa$-Borel relations (but pedantically there are
  $\kappa$--Borel relations whose restrictions to $\bbQ_\kappa$ are
  the above relations). 
\item If $\kappa$ is weakly compact, \then \, ``being $\kappa$--nowhere
  stationary Borel'' is  equivalent to ``being $\kappa$--Borel''. 
\item If $\kappa$ is weakly compact \then \, ``$\{p_i:i < \kappa\} \subseteq
  \bbQ_\kappa$ is predense'' is $\kappa$--stationary Borel. 
\item Changing the definition of $\bbQ_\kappa$, we may get that the
  relations ``$p \in \bbQ_\kappa$'', ``$p \le_{\bbQ_\kappa} q$'' as well as
  ``$p,q \in \bbQ_\kappa$ are compatible'' are $\kappa$--Borel and for every
  limit $\delta < \kappa$ there is an $\delta$--place $\kappa$--Borel
  function giving an increasing sequence of length $\delta$ an upper bound. 

The change does not affect the generic and the derived ideal.
\end{enumerate}
\end{claim}

\begin{PROOF}{\ref{k3}}
(1,2)\quad Straightforward. Note that for ``$p \in \bbQ_\kappa$" the main
point is ``there is a club $E$ of $\kappa$ disjoint to $S_p$'', as for $S
\subseteq \kappa$ statement ``$(\forall \alpha<\kappa)(S \cap \alpha$ is not
stationary)'' is $\kappa$--Borel.
\medskip

\noindent
(3)\quad Let $F:{}^\kappa \cH(\kappa) \longrightarrow \cP(\kappa)$ be
$\kappa$--Borel and let $X=\{A\subseteq\cH(\kappa): F(A)$ is nowhere
stationary$\}$. To show that $X$ is $\kappa$--Borel it is enough to note
that 
\begin{center}
  $A\subseteq\kappa$ is nowhere stationary if and only if $A$ does not
  reflect. 
\end{center}
So the assertion should be clear. 
\medskip

\noindent
(4)\quad We define $F:{}^\kappa(\bbQ_\kappa) \longrightarrow \cP(\kappa)$ as 
follows.  For $\bar p \in {}^\kappa(\bbQ_\kappa)$ let 
\[F(\bar p) = \big\{\partial \in S^\kappa_{\inc}: \{p_i \cap {}^{\partial >}2:i
< \partial\mbox{ and }\tr(p) \in {}^{\partial >}2\}\mbox{ is predense in }
\bbQ_\partial \big\}.\]  
Clearly,  $F$ is a $\kappa$-Borel function (well, replacing ${}^\kappa 2$ by
${}^\kappa(\bbQ_\kappa)$) and we have:
\begin{enumerate}
\item[$(*)$]   $\{p_i:i < \kappa\} \subseteq \bbQ_\kappa$ is predense 
\Iff \, $F(\bar p)$ is stationary in $\kappa$.
\end{enumerate}
Why?  First, if $\{p_i:i < \kappa\}$ is not predense let $q \in
\bbQ_\kappa$ be incompatible with every $p_i$ which means $(\tr(q)
\notin p_i) \vee (\tr(p_i) \notin q)$, so easily for every $\partial \in
(\ell g(\tr(q),\kappa)$, $q \cap {}^\partial 2$ witnesses $\partial \notin
F(\bar{p})$.  Second, if $\{p_i:i < \kappa\}$ is predense, use the proof of
``$\bbQ_\kappa$ is $\kappa$-bounding''.  So we are done 
(replacing ${}^\kappa(\bbQ_\kappa)$ by ${}^\kappa 2$ via coding).
\medskip

\noindent
(5)\quad We define $\bbQ'_\kappa$ as the set of all quadruples $q = 
(\varrho_q,S_q,\bar\Lambda_q,E_q)$ such that $(\varrho_q,S_q,
\bar\Lambda_p)$ is as in Definition \ref{n5}(A), for a unique $T_q = T[q]$ a
subtree of ${}^{\kappa >}2$ and $E_q$ is a club of $\kappa$ disjoint to $S_p
\backslash (\ell g(\varrho_q)+1))$ such that $\ell g(\varrho_q) \in E_q$.
We let $q_1 \le q_2$ \Iff \,: 
\begin{enumerate}
\item[(a)]  $\varrho_{q_1} \trianglelefteq \varrho_{q_2}$, $S_{q_2} \supseteq
  S_{q_1} \backslash (\ell g(\varrho_2)+1)$, 
\item[(b)]  $\partial \in S_{q_1} \backslash (\ell g(\varrho_2)+1)
\ \Rightarrow\ \Lambda_{q_1,\partial} \subseteq \Lambda_{q_2,\partial}$,
\item[(c)]  $\bbQ_\kappa \models T[q_1] \le T[q_2]$,
\item[(d)]  $E_{q_1} \supseteq E_{q_2}$,
\item[(e)]  if $q_1 \ne q_2$ then $\varrho_{q_1} \ne \varrho_{q_2}$.
\end{enumerate}
[Why the choice of (e)?  The motivation is that otherwise an increasing
sequence $\bar{p}=\langle p_\alpha: \alpha<\delta<\kappa\rangle$ with
$\tr(p_ \alpha)$ constant may have no upper bound because
$\bigcup\limits_{\alpha<\delta} S_{p_ \alpha}$ may reflect in some
$\partial>{\ell g}(\tr(p_ \alpha))$. But by the present definition: if
$\bar{p}$ is eventually constant this is trivial; if not then $\rho=
\bigcup\limits_{i<\delta }\tr(p_\alpha)$ has length which belongs to
$\bigcap\limits_{\alpha<\delta} E_{p_\alpha}$ and we can finish easily.]
\end{PROOF}

\begin{observation}
  \label{Claim8.2(4)}
Assume $\kappa$ is weakly compact. For a set  $X\subseteq {}^\kappa\kappa$
we have $(a) \Leftrightarrow (c)$ and $(b) \Leftrightarrow (d)$, where
\begin{enumerate}
\item[(a)] $X$ is $\kappa$--stationary Borel,
\item[(b)] ${}^\kappa\kappa\setminus X$ is $\kappa$--stationary Borel,
\item[(c)]  $X$ is $\Sigma^1_1(\kappa)$,
\item[(d)]  $X$ is $\Pi^1_1(\kappa)$.
\end{enumerate}
\end{observation}

\begin{remark}
Note that the family $\big\{X\subseteq {}^\kappa\kappa: X \mbox{ is }
\Sigma^1_1(\kappa)\big\}$ is closed under $(\exists Y\subseteq\kappa)$
and unions/intersections of  $\leq\kappa$ elements. 
\end{remark}

\begin{PROOF}{\ref{Claim8.2(4)}}
\noindent
\underline{Clause (a) implies  clause (c)}:

Let $F_1$ be a $\kappa$-Borel function from ${}^\kappa \kappa$ to
$\cP(\kappa)$ such that $X = \{\eta \in {}^\kappa \kappa:\bold
B(\eta)$ is stationary$\}$.  \Wilog \,
\begin{enumerate}
\item[$(*)_1$]   $F_1$ is defined by the sequence $\bar{\bold B}_1 =
  \langle \bold B_{1,\alpha}:\alpha < \lambda\rangle$, $\bold
  B_{1,\alpha}$ a Borel subset of ${}^\kappa \kappa$ such that $F_1(\eta) =
  \{\alpha:\eta \in \bold B_{1,\alpha}\}$. 
\end{enumerate}
Let $M_\kappa \prec (\cH(2^\kappa)^+,\in)$ of cardinality $\kappa$ be such
that $[M_\kappa]^{<\kappa} \subseteq M_\kappa$, $F_1 \in M_\kappa$
(necessarily $\kappa +1\subseteq M_\kappa)$. Let $\langle M_\alpha:\alpha
<\kappa\rangle$ be $\prec$--increasing continuous with union $M_\kappa$ such
that $\|M_\alpha\| \le |\alpha| + \aleph_0$ and $F_1 \in M_0$ (necessarily
$\kappa \in M_0$).    

Let $E = \{\mu:\mu < \kappa$ is strong limit cardinal such that $M_\mu \cap
\kappa = \mu$ hence $M_\mu \cap \cH(\kappa) = \cH(\mu)$ and $\alpha < \mu
\Rightarrow \|M_\alpha\| < \mu\}$.  Clearly $E$ is a club of $\kappa$.  For
$\mu \in E$ let $N_\mu$ be the Mostowski collapse of $M_\mu$ and let
$\pi_\mu$ be the isomorphism from $M_\mu$ onto $N_\mu$. Let $F^1_\mu =
\pi_\mu(F_1)$ and $\bar{\bold B}_\mu = \langle \bold B_{\mu,\alpha}:\alpha
< \mu\rangle = \pi_\mu(\bar{\bold B}_1)$.
Now, 
\begin{enumerate}
\item[$(*)_2$]  for $\mu \in E$ (only inaccessible interests us) we have
  $F^1_\mu:{}^\mu \mu \rightarrow \cP(\mu)$, 
\item[$(*)_3$]  for $\eta \in {}^\kappa \kappa$ the following conditions are
  equivalent: 
\begin{enumerate}
\item[$(\alpha)$]  $\eta \in X$,
\item[$(\beta)$]  $\cU_\eta := \{\partial < \kappa:\eta \rest \partial \in
  {}^\partial \partial$ and $F^1_\partial(\eta \rest \partial)$ is a
  stationary subset of $\partial\}$ is stationary in $\kappa$,
\item[$(\gamma)$]  the tree $\cT_\eta$ has no $\kappa$-branch, where
  $\cT_\eta = \bigcup\limits_{\alpha < \kappa} T_{\eta,\alpha}$ where
  $T_{\eta,\alpha}$ is the set of $\rho \in {}^\alpha \kappa$
such that:
\begin{enumerate}
\item[$\bullet_1$]  $\rho$ is an increasing continuous sequence of
  cardinals from $E$,
\item[$\bullet_2$]  $\eta \rest \rho(B) \in {}^{\rho(\beta)}\rho(\beta)$, 
\item[$\bullet_3$]  $\langle F^1_{\rho(\beta)}(\eta \rest \beta):\beta
  < \ell g(\alpha)\rangle$ is increasing, i.e., if $\beta_1 < \beta_2 =
   \ell g(\rho)$ then $F^1_{\rho(\beta_1)}(\eta \rest \beta_1) =
   F^1_{\rho(\beta_2)}(\eta \rest \beta_2) \cap \beta_1$, 
\item[$\bullet_4$]  $F^1_{\rho(\beta)}(\eta \rest \beta)$ is a
  non-stationary subset of $\rho(\beta)$, 
\end{enumerate}
\item[$(\delta)$]  for a stationary set of $\partial < \kappa$, the tree
  $\cT_\eta \cap {}^{\partial >}\partial$ has no $\partial$-branch.
\end{enumerate}
\end{enumerate}
This suffices because by $(\alpha) \Leftrightarrow (\gamma)$ in
$(*)_3$, clearly $X$ is defined by $(\gamma)$ and this can be
expressed by a $\Pi^1_1$-formula.
\medskip

Why does $(*)_3$ hold?
\medskip

\underline{$(\alpha) \Rightarrow (\beta)$}:

Let $M'_\lambda$ be like $M_\lambda$ but $\{M_\lambda,\bar M,\eta\}
\in M'_\lambda$ and let $\bar M' = \langle M'_\alpha:\alpha <
\kappa\rangle$ be like $\bar M$ for $M'_\lambda$ and $\{\bar
M_\gamma,\bar M,\eta\} \in M'_0$ and $E' \subseteq E$ is like $E$ for
$\bar M'$ and also $N'_\alpha,\pi_\alpha(\alpha \in E')$.  

Easily $\partial \in E'\ \Rightarrow\ \pi_\partial(\bar{\bold B}
\rest \partial) = \pi'_\partial(\bar{\bold B}_1 \rest \gamma)$, etc.
So for a club of $\partial < \kappa$, $F_1(\eta) \cap \partial = 
F^1_\partial(\eta \rest \partial)$ and we are easily done.
\medskip

\underline{$(\beta) \Leftrightarrow (\gamma)$}:

Easy, too.
\medskip

\underline{$(\gamma) \Leftrightarrow (\delta)$}:

Because $\kappa$ is weakly compact.
\medskip

\noindent
\underline{Clause (c) implies (a)}:

Similarly.
\medskip

\noindent
\underline{Clause (b) \Iff \, clause (d)}:

Similarly.
\end{PROOF}

\begin{claim}
\label{k4d}
Assume $\kappa$ is weakly compact.
\begin{enumerate}
\item ``$\{p_i:i < \kappa\} \subseteq \bbQ_\kappa$ is predense'' is 
 $\Pi^1_1(\kappa)$; this means $\{(i,\eta):\eta \in p_i,i <
\kappa\}$ is $\Pi^1_1(\kappa)$--set recalling \ref{y6}(3).
\item ``$X = {}^\kappa 2 \backslash \bigcup\{\lim_\kappa(\cT_\alpha): \alpha 
< \kappa\}$ belongs to $\id(\bbQ_\kappa)$ each $\cT_\alpha$ a subtree
of ${}^{\kappa >} 2$'' is a $\kappa$-stationary-Borel realtion.
\end{enumerate}
\end{claim}

\begin{PROOF}{\ref{k4d}}
(1)\quad By \ref{k3}(4) and \ref{Claim8.2(4)}
\medskip

\noindent
(2)\quad  As $\kappa$ is weakly compact, $X \in \id(\bbQ_\kappa)^+$ \Iff \,
there is $p \in \bbQ_\kappa$ such that $\lim_\kappa(p) \subseteq X$
\Iff \, there are $\alpha < \kappa$ and $q$ as in \ref{k3}(5) above $p$ 
such that $T[q] \subseteq \cT_\alpha$.  So $X \in \id(\bbQ_\kappa)^+$ is
a $\Sigma^1_1(\kappa)$ condition hence ``$X \in \id(\bbQ_\kappa)$'' is a
$\Pi^1_1(\kappa)$ condition and we finish by \ref{Claim8.2(4)}.
\end{PROOF}

\begin{claim}
\label{k5a}
1) Assume $\bbP$ is $({<} \kappa)$--complete or just strategically
$\kappa$-complete (i.e. for games with $\kappa$ moves, COM winning if a
play takes $\kappa$-moves).
\begin{enumerate}
\item[(a)]  Satisfying a $\kappa$-stationary-Borel is absolute between $\bold
   V$ and $\bold V^{\bbP}$.
\item[(b)]  Satisfying a $\Sigma^1_1(\kappa)$ relation is absolute between
  $\bold V$ and $\bold V^{\bbP}$. 
\end{enumerate}
\mn
2) If $\bbP$ is strategically $\theta$-complete for every $\theta <
\kappa$, then ``$p \in \bbQ_\kappa$'' is upward absolute from $\bold V$
to $\bold V^{\bbP}$.
\end{claim}

\begin{PROOF}{\ref{k5a}}
  Should be clear.
\end{PROOF}

\begin{observation}
\label{k6}
Being $\kappa$-stationary Borel is not equivalent to being
$\kappa$-Borel.
\end{observation}

\begin{PROOF}{\ref{k6}}
  Consider ${\bold A}_1 = \{S \subseteq \kappa:S$ is stationary$\}$ and
  ${\bold A}_0=\cP(\kappa)\setminus {\bold A}_1$. Clearly ${\bold A}_1$ is
  $\kappa$--stationary Borel and ${\bold A}_0$ is $\kappa$--non-stationary
  Borel (defined naturally).  Assume towards contradiction that
  ${\bold A}_1$ is equal to a $\kappa$--Borel set $\bold B$. Let
  $\Cohen_\kappa = ({}^{\kappa >}2,\triangleleft)$, and let $\name\eta$ be
  the $\kappa$-generic real. Then for some truth value $\bold t$ and
  $\nu \in {}^{\kappa >}2$ we have $\nu \Vdash_{\Cohen_\kappa}$``
  $\name\eta^{-1}\{1\} \in \bold B$ \Iff \, $\bold t = 1$ ''.  Let
  $\iota < 2$, $M \prec (\cH(\kappa^+),\in)$ be of cardinality
  $\kappa$, $[M]^{< \kappa} \subseteq M$ and $\bold B,\kappa \in M$.  Now we 
  can find $\nu_\iota \in {}^\kappa 2$ such that $\nu \triangleleft
  \nu_\iota$ and $\{\nu_\iota \rest \alpha:\alpha < \kappa\}$ is a subset of  
  $\Cohen_\kappa$ generic over $M$ and $\nu_\iota(\alpha) = \iota$ for a
  club of $\alpha < \kappa$.  By easy absoluteness we get
  $\nu_\iota \in \bold B$ \Iff \, $\bold t=1$, easy contradiction.
\end{PROOF}

\begin{claim}
\label{k8}
\begin{enumerate}
\item Consistently, $\kappa$ is weakly compact but being predense in 
$\bbQ_\kappa$ is not absolute under $\kappa$--complete forcing and hence it
is not $\kappa$-Borel.  
\item Assume $\kappa$ is weakly compact and moreover (can be gotten by
  preliminary forcing) this is preserved by adding $\kappa^+,\kappa$-Cohen.
  \Then \, adding a $\kappa^+,\kappa$-Cohens (i.e. forcing with
  $\Cohen_{\kappa,\kappa^+}$) we get the above.
\item In part (2) also $\{S \subseteq \kappa:S$ stationary in $\kappa\}$
(is $\kappa$-stationary Borel but) its complement is not
$\kappa$-stationary Borel.
\end{enumerate}
\end{claim}

\begin{PROOF}{\ref{k8}}
  The counterexample will be gotten by forcing by
  $\Cohen_{\kappa,\kappa^+}$, e.g., when $\kappa$ is Laver indestructible
  supercompact but similarly for $\kappa$ weakly compact by a preliminary
  forcing and the set $S_2$ below being $\{\partial < \kappa:\partial$ not
  Mahlo$\}$. 

Assume $\kappa$ is Mahlo and let $S_1 \subseteq S^\kappa_{\inc}$ be
nowhere stationary but unbounded. Let $S_2 \subseteq S^\kappa_{\inc}$ be a
stationary subset of ${\rm acc}(S_1)$. We define a representation $\bbQ_1$
of $\Cohen_\kappa$ as follows: 
\begin{enumerate}
\item[$(*)_1$]  
\begin{enumerate}
\item[(A)]  $p \in \bbQ_1$ iff:
\begin{enumerate}
\item[(a)] $p = \langle \eta_\partial:\partial \in S_2\cap \alpha\rangle
  =\langle \eta_{p,\partial}:\partial \in S_2 \cap\alpha_p\rangle$ for some
  $\alpha = \alpha_p < \kappa$,  
\item[(b)] for each $\partial \in S_2 \cap\alpha_p$, $\eta_\partial \in
  {}^\partial 2$. 
\end{enumerate}
\item[(B)]  $\bbQ_1$ is ordered by $\trianglelefteq$. 
\item[(C)]  The generic of $\bbQ_1$ is $\name{\bar\eta} = \bigcup\{p:p 
  \in \name{\bold G}_{\bbQ}\}$ and let $\name Y =
  \{\name\eta_\partial:\partial \in S_2\}$, where $p \Vdash$``
  $\name\eta_\partial = \nu$'' if $\partial \in S_2 \cap \alpha_p 
  \wedge \eta_{p,\partial} = \nu$.
\item[(D)]  The length $\ell g(p)$ of $p$ is the minimal $\alpha < \kappa$
  such that $\dom(p) = S_2 \cap \alpha$.
\end{enumerate}
\end{enumerate}
Next we let $p_\eta = \{\rho \in {}^{\kappa >}2:\rho \trianglelefteq \eta
\vee \eta \trianglelefteq \rho\} \in \bbQ_\kappa$ for $\eta \in
{}^{\kappa >}2$. Now
\begin{enumerate}
\item[$(*)_2$]  $\Vdash_{\bbQ_1} ``\{p_\eta:\eta \in \name Y\}$ is a
  predense subset of $\bbQ_\kappa"$.
\end{enumerate}
[Why?  If not, let $q \in \bbQ_1$, $q \Vdash_{\bbQ_1} ``\name p = (\nu,\name 
  S,\langle \name\Lambda_\partial:\partial \in \name S\rangle) \in
\bbQ_\kappa$ is incompatible with every $p_\eta$ for $\eta \in
\name Y$ and $\name E_1$ is a club of $\kappa$ disjoint to $\name S"$.

Let $\langle q_i:i < \kappa\rangle$ be increasing continuous in
$\bbQ_1$, $q_0=q$ and $q_{i+1}$ forces a value to $\name S \cap i$, $\langle  
\name\Lambda_\partial:\partial \in \name S \cap i\rangle$ and to
$\min(\name E_1 \backslash i)$ called $\gamma_i$.  Let
\[E=\big\{\delta < \kappa:\delta\mbox{ is a limit ordinal and }i < \delta  
\Rightarrow \ell g(q_i) < \delta \wedge \gamma_i < \delta\big\}.\]    
Clearly $E$ is a club of $\kappa$, so we can choose $\partial \in S_2 \cap
E$. Then $q_\partial \in \bbQ_1$ is well defined and of length $\partial$
and it forces a value $(S',\langle \Lambda'_\theta:\theta\in S'\rangle)$ to
$(\name S \cap \partial,\langle \name\Lambda_\theta:\theta \in \name S \cap 
\partial\rangle)$ and this value represents a condition $r \in
\bbQ_\partial$. Moreover, $q_\partial$ forces that $\partial =
\sup\{\gamma_i:i < \partial\} = \sup(\name E_1 \cap \partial) \in \name E_1$
and hence it forces $\partial \notin \name S$.  Choose $\nu \in
\lim_\partial(r) \in {}^\partial 2$  and let $q'_{\partial +1}$ be above
$q_\partial$ such that $q'_{\partial + 1}(\partial) = \nu$, i.e. $q'_{\delta
  +1} \Vdash ``\nu \in \name Y"$ and we arrive to an easy contradiction.]  
\medskip

Next, in $\bold V^{\bbQ_1}$ we define $\bbQ_2=\bbQ_2[\name{\eta}^1_\kappa]$,
$\name{\eta}^1_\kappa$ the generic for $\bbQ_1$, by
\begin{enumerate}
\item[$(*)_3$]  
\begin{enumerate}
\item[(A)]  $p \in \bbQ_2$ \Iff \,
\begin{enumerate}
\item[(a)] $p = (\alpha,\bar\Lambda) = (\alpha_p,\bar\Lambda_p)$, 
\item[(b)] $\alpha_p < \kappa$, $\bar\Lambda_p = \langle
  \Lambda_{p,\partial}:\partial \in S_1 \cap \alpha_p\rangle$,
\item[(c)] each $\Lambda_{p,\partial}$ is a family of $\leq\partial$ dense
  subsets of $\bbQ_\partial$ (for $\partial \in S_1 \cap \alpha_p$),
\item[(d)] if $\theta \in S_2 \cap (\alpha +1)$, \then  \, $\theta =
  \sup\{\partial \in S_1 \cap \theta:\eta_\theta \rest \partial \notin
  \set(\Lambda_{p,\partial})\}$\\  
(recall $S_2 \subseteq {\rm acc}(S_1)$); 
\end{enumerate}
\item[(B)]  the order is being an initial segment.
\item[(C)]  The generic is $\name{\bar\Lambda} =
  \langle \name\Lambda_\partial:\partial \in S_1\rangle$.
\end{enumerate}
\end{enumerate}

Now in $\bold V^{\bbQ_1}$ the forcing notion $\bbQ_2$ is not $(<
\kappa)$-complete and even not strategically $\kappa$-complete but it is 
strategically $({<}\kappa)$-complete. (It is not strategically
$\kappa$-complete because given {\bf st}, let $M \prec
(\cH(\chi),\in)$, $\chi = (2^\kappa)^+$, $M \cap \kappa = \partial \in 
S_2$, $\|M\| = \partial$, $[M]^{< \partial} \subseteq M$, ${\bf st}\in
M$, $\name Y \in M)$.

Now in $\bold V^{\bbQ_1 * \name{\bbQ}_2}$ easily $p =
(\langle\rangle,S_1,\name{\bar\Lambda})$ belongs to $\bbQ_\kappa$ 
and it exemplifies that $\langle p_\eta:\eta \in \name Y\rangle$ is not
predense.  Also $\bbQ_1 * \name{\bbQ}_2$ has a dense set closed subset
equivalent to $\kappa$-Cohen and similarly $\bbQ_1$, hence $\Vdash_{\bbQ_1 * 
  \name{\bbQ}_2}$``$\kappa$ is weakly compact'' and
$\Vdash_{\bbQ_1}$``$\kappa$ is weakly compact''. So there are
$\kappa$--Borel functions ${\bold B}_1,{\bold B}_2$ with domain ${}^\kappa
2$ and such that 
\[\begin{array}{ll}
\Vdash_{\Cohen_\kappa} &\mbox{`` }{\bold B}_1(\name{\eta}_\kappa) \mbox{ is
                         generic over }{\bold V} \mbox{ for }\bbQ_1\mbox{
                         and}\\ 
&\ \ {\bold B}_2(\name{\eta}_\kappa) \mbox{ is generic over }{\bold
  V}[{\bold B}_1(\name{\eta}_\kappa)] \mbox{ for }\bbQ_2[{\bold 
  B}_1(\name{\eta}_\kappa)] \mbox{ ''.} 
  \end{array}\]
  Assume that in $\bold V^{\bbQ_1}$, $\bold B$ is a (definition of a)
  $\kappa$--Borel subset of $[\cH(\kappa)]^\kappa$ which is the set of
  predense subsets of $\bbQ_\kappa$, so in $\bold V^{\bbQ_1 *
    \name{\bbQ}_2}$, $\bold B$ no longer satisfies this.  This is somewhat
  weaker than the desired conclusion, but if $\bar\eta = \langle
  \name\eta_\gamma:\gamma < \kappa^+\rangle$ is generic for
  $\Cohen_{\kappa,\kappa^+}$ and $\bold B \in \bold V[\bar\eta]$ is a
  (definition of a) $\kappa$-Borel subset of $[\cH(\kappa)]^\kappa$, for
  some $\alpha < \kappa$, $\bold B \in \bold V[\bar\eta \rest \alpha]$ and
  interpret $\name\eta_\alpha$ as the generic $\bbQ_1 *
  \name{\bbQ}_2$. Consider $\bar{p}={\bold  B}_1(\name{\eta}_\alpha)$. 

Now we can compute ${\bold B}_1(\bar{p})$ in ${\bold V}[\name{\eta}\rest
\alpha,\bar{p}]$ and in ${\bold V}[\name{\eta}\rest\alpha,
\name{\eta}_\alpha]$. As ${\bold B}$ is $\kappa$--Borel, we should get the
same result, but they are not the same. A contradiction. 
\end{PROOF}

\begin{definition}
\label{k5}
1) We say $M$ is a $\kappa$-model \when \,:
\mn
\begin{enumerate}
\item[(a)]  $M \subseteq (\cH(\kappa^+),\in)$ is transitive of
cardinality $\kappa,[M]^{< \kappa} \subseteq M$ and $M$ is a model of
ZFC$^-$ (i.e. power set axiom omitted);
\item[(b)]  similarly for $(\cH_{< \kappa^+}(\bold U),\in)$, $\bold U$ a set
  of ure-elements.
\end{enumerate}
\mn
2) We say $\eta$ is a $(M,\bbQ,\name\eta)$--generic $\kappa$--real 
\when \, (as in \cite{Sh:630}):
\mn
\begin{enumerate}
\item[(a)]  $\bbQ$ is a forcing notion definable in $M$, (absolutely
  enough in the interesting cases),
\item[(b)]  $\name\eta \in M$ a $\bbQ$--name of $\kappa$--real, 
 defined by a Borel function from a sequence of $\kappa$ truth values
 of the form  ``$p \in \name{\bold G}_{\bbQ}$'',
\item[(c)] there is $\bold G \subseteq \bbQ^M$ generic over $M$ such that
  $\name\eta[\bold G] = \eta$.
\end{enumerate}
\end{definition}

\begin{observation}
\label{k7}
1)\quad A $\kappa$--Borel set $\bold B$ belongs to $\id(\bbQ_\kappa)$ \Iff
\, for some $\kappa$--real ${\bold c}={\bold c}_\bB$ for every $\kappa$--model $M$
to which $\bold c$  belongs we have:
\begin{enumerate}
\item[$\bullet$]  if $\nu$ is $(M,\bbQ_\kappa,{\name\eta})$-generic real
  \then \, $\nu \notin \bB$.
\end{enumerate}
\mn 
2)\quad If $M$ is a $\kappa$-model, $M \models ``\bbQ$ is
$({<}\kappa)$--strategically complete forcing notion (set or class in $M$
sense) (or a definition of $\bbQ)"$ and $\bold G \subseteq \bbQ^M$ is generic over
$M$ \then \, $M[\bold G]$ is a $\kappa$-model.
\end{observation}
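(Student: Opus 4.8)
The plan is to dispatch part (2) first, since it feeds into part (1). With $M,\bbQ,\bold G$ as in (2), I would note that forcing with $\bbQ$ over $M$ adds no new bounded subset of $\kappa$ and preserves the regularity of $\kappa$ — because $M\models$ ``$\bbQ$ is $(<\kappa)$-strategically complete'' — so $\kappa^{M[\bold G]}=\kappa$; moreover $M[\bold G]$ is transitive of cardinality $\le|M|=\kappa$, hence a subset of $\cH(\kappa^+)$, and models ZFC$^-$ by the usual forcing arguments. The one clause that uses the closure hypothesis is $[M[\bold G]]^{<\kappa}\subseteq M[\bold G]$: given $\delta<\kappa$ and $\langle a_i:i<\delta\rangle$ with each $a_i\in M[\bold G]$, I would choose $\bbQ$-names $\name a_i\in M$; since $M$ is closed under $<\kappa$-sequences, $\langle\name a_i:i<\delta\rangle\in M$, and evaluating this sequence of names by $\bold G$ inside $M[\bold G]$ produces $\langle a_i:i<\delta\rangle\in M[\bold G]$.

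For the forward direction of (1), assuming $\bold B\in\id(\bbQ_\kappa)$, I would unwind the definition to get $i(*)\le\kappa$ and dense open $\cI_i\subseteq\bbQ_\kappa$ ($i<i(*)$) witnessing it, i.e.\ such that $\bold B$ is disjoint from the set of $\kappa$-reals fulfilling all the $\cI_i$. Using the $\kappa^+$-c.c.\ of $\bbQ_\kappa$ (\ref{n11}(2)) and the definability analysis of \ref{k3}, \ref{k4d}, I would code $\langle\cI_i:i<i(*)\rangle$ together with a $\kappa$-Borel code of $\bold B$ by a single $\kappa$-real $\eta$. Then for any $\kappa$-model $M\ni\eta$: since $M$ is $<\kappa$-closed and computes $\bbQ_\kappa$ and ``dense'' correctly, $\langle\cI_i:i<i(*)\rangle\in M$ and each $\cI_i\cap M$ is dense in $\bbQ_\kappa^M$, so any $(M,\bbQ_\kappa)$-generic $\nu$ — whose associated filter $\{p\in\bbQ_\kappa^M:\nu\in\lim_\kappa(p)\}$ meets each $\cI_i\cap M$ — fulfills every $\cI_i$, and therefore $\nu\notin\bold B$.

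For the reverse direction of (1), in contrapositive form: suppose $\bold B\notin\id(\bbQ_\kappa)$ and let $\eta$ be an arbitrary $\kappa$-real; I must produce a $\kappa$-model $M\ni\eta$ and an $(M,\bbQ_\kappa)$-generic $\nu\in\bold B$. By \ref{p19}(1) one has $\id_1(\bbQ_\kappa)=\id_2(\bbQ_\kappa)=\id(\bbQ_\kappa)$, so $\bold B\notin\id_1(\bbQ_\kappa)$; since the conditions deciding the $\kappa$-Borel statement ``$\name\eta\in\bold B$'' are dense, some $p_*\in\bbQ_\kappa$ forces $\name\eta\in\bold B$, where $\name\eta$ is the canonical generic $\kappa$-real of $\bbQ_\kappa$ (see \ref{a11}, \ref{z4}(3)). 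As $\kappa$ is inaccessible, $\kappa^{<\kappa}=\kappa$, so I can take $N\prec(\cH(\chi),\in)$ for large $\chi$ with $|N|=\kappa$, $N^{<\kappa}\subseteq N$ and $\{\eta,p_*\}$ and a code of $\bold B$ all in $N$, and let $M$ be its transitive collapse — a $\kappa$-model with $\eta\in M$ and $M\models$ ``$p_*\Vdash_{\bbQ_\kappa}\name\eta\in\bold B$''. Using that $\bbQ_\kappa$ is $\kappa$-strategically complete (\ref{n11}(1)), I would build in $\bold V$ a filter $\bold G\subseteq\bbQ_\kappa^M$ generic over $M$ with $p_*\in\bold G$ (the winning strategy of COM carries the recursion through limits, and density lets it meet the $\le\kappa$ dense subsets of $M$). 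Then $\nu:=\name\eta[\bold G]$ is $(M,\bbQ_\kappa)$-generic, $M[\bold G]$ is a $\kappa$-model by part (2), and $M[\bold G]\models\nu\in\bold B$ by the forcing theorem inside $M$; since ``$\nu\in\bold B$'' is $\Delta^1_1(\kappa)$ with parameters in the $\kappa$-model $M[\bold G]$, it is absolute between $M[\bold G]$ and $\bold V$, so $\nu\in\bold B$.

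The hard part will be the absoluteness and coding issues underpinning the two directions: that a sequence of $\le\kappa$ dense open subsets of $\bbQ_\kappa$ is codable by a $\kappa$-real, and that $\kappa$-Borel (indeed $\Sigma^1_1(\kappa)$) membership is absolute between a forcing extension of a $\kappa$-model and $\bold V$. Both reduce to the fact that, for $\kappa$ weakly compact, ``$p\in\bbQ_\kappa$'', ``$p\le_{\bbQ_\kappa}q$'' and the relevant (pre)density notions are genuinely $\kappa$-Borel rather than merely $\kappa$-stationary-Borel (\ref{k3}, \ref{k4d}), so that every $\kappa$-model evaluates them correctly. I would also double-check that the specific requirements of ``$(M,\bbQ_\kappa)$-generic real'' in \ref{k5}(2) are met literally, i.e.\ that the canonical $\name\eta$ is a generic in the sense of \ref{z4}(3) and is recovered from $\bold G$ by the prescribed $\kappa$-Borel function.
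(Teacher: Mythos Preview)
The paper states Observation~\ref{k7} without proof. Your argument fills in the omitted details correctly and uses exactly the ingredients the paper makes available: the $\kappa^+$-c.c.\ and $\kappa$-strategic completeness of $\bbQ_\kappa$ from \ref{n11}, the equality $\id_1(\bbQ_\kappa)=\id_2(\bbQ_\kappa)$ from \ref{p19}(1), and the $\kappa$-Borel definability of $\bbQ_\kappa$, its order and the predensity relation from \ref{k3} and \ref{k4d} (the latter being what lets you code $\le\kappa$ maximal antichains by a single $\kappa$-real and have any $\kappa$-model read them back correctly).

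One minor point on the reverse direction: you invoke $\Delta^1_1(\kappa)$-absoluteness for ``$\nu\in\bold B$'' between $M[\bold G]$ and $\bold V$. The more direct route is the straightforward induction on the $\kappa$-Borel code of $\bold B$: since the code and $\nu$ both lie in the transitive, $<\kappa$-closed model $M[\bold G]$, evaluation of the code at $\nu$ is absolute level by level (basic open sets, complements, and $\le\kappa$-ary unions/intersections are all computed identically). This sidesteps having to argue that $\Pi^1_1(\kappa)$ statements reflect downward from $\bold V$ to $M[\bold G]$, which is not automatic for arbitrary $\kappa$-models.
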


\begin{definition}  
\label{k31}
\begin{enumerate}
\item We say a set $X \subseteq {}^\kappa \cH(\kappa)$ is
  $\kappa-\id_\kappa$-Borel \when \,: 
\begin{enumerate}
\item[(a)]  $\id_\kappa$ is an ideal on $\cP(\kappa)$, 
\item[(b)]  for some $\kappa$-Borel function $F:{}^\kappa \cH(\kappa)
  \longrightarrow \cP(\kappa)$ for every $\eta \in {}^\kappa \cH(\kappa)$ 
  we have: $\eta \in X$ iff $F(\eta) \in \id$.
\end{enumerate}
Here (in (2),(3)) we may omit $\kappa$ when clear from the context.
\item Similarly for $\id^+_\kappa$.
\item Let $\id_{\wc}(\kappa)$ be the weakly compact ideal on $\kappa$.
\end{enumerate}
\end{definition}

So

\begin{observation}  
\label{k34}
Letting $\id_{\nst}(\kappa)$ be the non-stationary ideal on $\kappa$,
$\kappa$--$\id^+_{\nst}(\kappa)$-Borel means $\kappa$-stationary Borel.   
\end{observation}
\bigskip\bigskip

\begin{acknowledgements}
The author thanks Alice Leonhardt for the beautiful typing. 
We also owe a great debt to the referee for doing so much to improve the
paper.  
\end{acknowledgements}

\newpage 


\end{document}